\title{Suppression of Chemotactic Singularity via Viscous Flow with Large Buoyancy}
\author{Zhongtian Hu\thanks{
                  Department of
Mathematics, Duke University, Durham, NC 27708, USA; email: zhongtian.hu@duke.edu}
}
 \numberwithin{equation}{section}
\newtheorem{thm}{Theorem}[section]
\newtheorem{lem}[thm]{Lemma}
\newtheorem{cor}[thm]{Corollary}
\newtheorem{prop}[thm]{Proposition}
\newtheorem{rmk}[thm]{Remark}
\newtheorem{defn}[thm]{Definition}
\newcommand{\R}{\mathbb{R}}
\newcommand{\N}{\mathbb{N}}
\newcommand{\T}{\mathbb{T}}
\newcommand{\calB}{B_0}
\newcommand{\calT}{\mathcal{T}}
\newcommand{\calC}{\mathcal{C}}
\newcommand{\brho}{\bar\rho}
\newcommand{\trho}{\tilde{\rho}}
\newcommand{\mfg}{\mathfrak{g}}
\newcommand*{\rom}[1]{\expandafter\@slowromancap\romannumeral #1@}
\DeclareMathOperator{\divv}{div}
\DeclareMathOperator{\ra}{Ra}
\DeclareMathOperator{\re}{Re}
\newcommand{\p}{\partial}
 \newcommand{\red}[1]{{#1}}
\begin{document}
\newpage
\maketitle
\begin{abstract}
    In this work, we study the Keller-Segel-Navier-Stokes equation with low Reynolds number and subject to large buoyancy force. We show that for initial cell density with arbitrarily large mass (i.e. the $L^1$ norm), the solution remains regular for all times in the regime of sufficiently large buoyancy and viscosity. The major blowup suppression mechanism is a norm-stabilizing property possessed by a ``static problem,'' where the full problem can be seen as a perturbation of this quasi-stationary model. 
\end{abstract}

\renewcommand{\thefootnote}{\roman{footnote}}
\section{Introduction}
We consider the parabolic-elliptic Keller-Segel equation in {a} periodic channel $\Omega = \T \times [0,\pi]$ subject to the influence of {the} buoyancy-driven Navier-Stokes equation:
\begin{equation}
\label{eq:ksgen}
    \begin{cases}
        \p_t \rho + u\cdot \nabla \rho -\Delta \rho + \divv(\rho\nabla(-\Delta_N)^{-1}(\rho- \rho_m)) = 0,\\
        \rho_m = \frac{1}{|\Omega|}\int_\Omega \rho(t,x) dx,\\
        \p_t u + u\cdot\nabla u - \frac{1}{\re}\Delta u + \nabla p = \ra\rho (0,1)^T,\\
        \divv u = 0.
    \end{cases}
\end{equation}
We equip the system with initial data $\rho(0,x) = \rho_0(x)$, $u(0,x) = u_0(x)$, where $\rho_0$ {is a nonnegative scalar function} and $u_0$ is a divergence-free vector field. We also consider the following set of boundary conditions:
\begin{equation}
    \label{bc}
    \p_2 \rho = \nabla \rho \cdot n = 0,\quad u_2 = u\cdot n = 0,\quad \omega = \nabla^\perp\cdot u = 0,\quad\quad \text{on $\p\Omega$}.
\end{equation}
Here, $\T := [-\pi, \pi)$ is the one dimensional torus, and a function $f$ defined on $\T$ means that $f$ assumes {the} periodic boundary condition with period $2\pi$; $n = (0,1)^T$ denotes the unit normal derivative along $\p\Omega = \T \times \{0,\pi\}$; {$\nabla^\perp$ denotes the differential operator $ (-\p_2, \p_1)$.}

\noindent The first equation in \eqref{eq:ksgen} is {the classical parabolic--elliptic Keller-Segel equation with advection.} This equation characterizes {a population of bacteria with the density} $\rho$ that moves in response to {an} attractive chemical that the bacteria themselves secrete. {Specifically, the chemical-induced aggregation effect is modeled by the term $\divv(\rho\nabla (-\Delta_N)^{-1}(\rho-\rho_m))$, indicating a scenario where chemicals homogenize much faster than the motion of micro-organisms. Here, $-\Delta_N$ denotes homogeneous Neumann Laplacian that represents the classical condition of zero chemical flux across boundary $\p\Omega$.} Furthermore, chemotaxis usually takes place in ambient viscous fluids, which are classically modeled by Navier-Stokes equations with velocity $u$ and pressure $p$. In nature, the micro-organisms and the fluid can interact through various means (see, e.g., \cite{he2023enhanced}). The main interaction on which we focus is the coupling by buoyancy, {which originates from the variation of bacterial density in the domain.} Mathematically, such interaction appears in the fluid equation through the forcing term $\ra \rho (0,1)^T$, where $\ra$ denotes the Rayleigh number {measuring relative buoyancy strength due to density variation.}

{In this work, we focus on the study of chemotaxis-fluid interaction in a canonical domain $\T \times [0,\pi]$. {We remark that previous studies classically consider domains without boundaries, e.g. $\R^2, \T^2, \T\times \R$, in order to study chemotaxis-fluid interaction in the bulk of fluid (see \cite{bedrossian2017suppression,he2018suppression,kiselev2016suppression,he2023enhanced}).} Our work, in contrast, intends to investigate the {buoyancy effect} along the vertical direction. From this perspective, it is interesting to consider a domain with a top and bottom boundary, and study the chemotaxis-fluid interaction via buoyancy when taking boundary effects into account.} We also emphasize the boundary conditions that we insist for the rest of this work: on one hand, we assert the classical homogeneous Neumann boundary condition for the cell density. On the other hand, we impose the Lions boundary condition onto the fluid equation: $u\cdot n|_{\p \Omega} = 0,\; \omega|_{\p \Omega} = 0$. This particular boundary condition, as a special case of the more general Navier boundary condition, is prevalent in simulations of flows in the presence of rough boundaries, such as in hemodynamics. It was first introduced and rigorously studied by J.-L. Lions \cite{lions1969quelques} and P.-L. Lions \cite{lions1997mathematical}. We refer the readers to works such as \cite{kelliher2006navier,filho2005inviscid,clopeau1998vanishing,xiao2007vanishing} and references therein for a more thorough discussion regarding the Navier boundary condition.

When the ambient fluid is absent (i.e., when $\ra = 0$ and $u\equiv 0$ in \eqref{eq:ksgen}), we recover the classical parabolic-elliptic Keller-Segel equation:
\begin{equation}
    \label{eq:ks}
    \p_t \rho - \Delta \rho + \divv(\rho\nabla(-\Delta_N)^{-1}(\rho - \rho_m)) = 0,\quad \text{ in }\Omega.
\end{equation}
First introduced by Patlak \cite{patlak1953random}, and Keller and Segel \cite{keller1971model}, \eqref{eq:ks} has been classically studied in various settings. We refer the interested readers to the following list of works: \cite{bedrossian2015large,bedrossian2014existence,biler20068pi,blanchet2008infinite,blanchet2006two,calvez2008parabolic,carrillo2008uniqueness,horstmann20031970,horstmann20041970,jager1992explosions,nagai1995blow,nagai1997application}. A remarkable feature enjoyed by \eqref{eq:ks} is that the solution can form singularity in finite time when dimension is greater than $1$. In dimension $2$, \eqref{eq:ks} is $L^1$-critical. For any initial datum $\rho_0$ with finite second moment, the solution is globally regular if the initial mass $\|\rho_0\|_{L^1} < 8\pi$, see e.g. \cite{bedrossian2014existence,blanchet2006two,carrillo2008uniqueness,jager1992explosions,wei2018global}. If the initial mass is strictly greater than $8\pi$, a finite-time singularity forms, as seen in \cite{blanchet2006two,calvez2008parabolic,jager1992explosions,nagai1995blow}. A more careful analysis of such blowup solutions are also carried out \cite{collot2022refined,collot2022spectral,velazquez2002stability,velazquez2004point,velazquez2004point2}.

It is also curious to understand the behavior of Keller-Segel equation under the influence of fluid advection, given the fact that most chemotactic processes take place in ambient fluid. {The presence of fluid advection can bring complicated effects to chemotaxis. Among these effects, we would like to focus on the \textit{regularization effects} induced by fluid advection. That is, we would like to understand how the transport term $u\cdot\nabla \rho$ could prevent potential singular behaviors in \eqref{eq:ks}. For the past decade, much progress has been made to understand such regularization effects in the context of Keller-Segel equation.}
In the regime of passive advection (i.e. the fluid velocity $u$ is given and is not coupled to the Keller-Segel equation), Kiselev and Xu in \cite{kiselev2016suppression} first demonstrate that given any initial datum $\rho_0$, there exists a relaxation-enhancing flow (see \cite{constantin2008diffusion} for a precise definition) with sufficiently large amplitude that can suppress the singularity formation. This result is later generalized by \cite{iyer2021convection} to a larger class of passive flows and more general aggregation equations. Moreover, in \cite{bedrossian2017suppression,he2018suppression}, the authors exploit the enhanced dissipation phenomenon induced by strong monotone shear flows. Such flows effectively reduce the dimensionality of the problem, where in 2D the singularity can be suppressed. The fast-splitting scenario induced by hyperbolic flows are also  explored in \cite{he2019suppressing,he2022fast}.

There also have been numerous attempts to investigate the regularity properties of the Keller-Segel equation coupling to active fluid models, and many of which address the global regularity of solutions to such coupled systems. We highlight that, among those results, either there is smallness assumption on initial data (e.g. \cite{chae2014global,di2010chemotaxis,lorz2012coupled,duan2010global}), or the global regularity of both the chemotaxis equation and fluid equation still hold if they are uncoupled (e.g. \cite{winkler2012global,winkler2021suppressing}). We also note that the authors in \cite{he2023enhanced,zeng2021suppression} study the blowup suppression mechanism of Keller-Segel-Navier-Stokes equation near a strong Couette flow. {These results are almost linear in a sense that the main driven mechanism is still brought by a dominating passive background flow.}

Recently in a series of works \cite{hu2023suppression,hu2023stokes} by the author joining with Kiselev and Yao, they analyze how buoyancy effects in fluid equations suppress chemotactic singularities in a genuinely nonlinear setting. In \cite{hu2023suppression}, the authors investigated the Keller-Segel equation evolving in ambient porous media under the influence of buoyancy. The authors demonstrated that a coupling with the porous media equation via an arbitrarily weak buoyancy constant suffices to arrest any potential chemotactic blowup. The key argument in \cite{hu2023suppression} is a careful analysis of a potential energy and the coercive term $\|\p_1\rho\|_{H^{-1}_0}$ in its time derivative. The authors observed that this $H^{-1}_0$ norm has to be small, and induces an anisotropic mixing effect along $x_1$-direction. This effect renders the system quasi-one-dimensional and therefore suppresses finite-time blowup. On the other hand, \cite{hu2023stokes} studies how Stokes-Boussinesq flow with strong buoyancy suppresses blowup of a Keller-Segel equation equipped with zero Dirichlet boundary condition. A rather soft argument in \cite{hu2023stokes} shows that the flow quenches the $L^2$ norm of cell density to be sufficiently small in the regime of large buoyancy.

{The other motivation of studying the regularization by active fluid advection in the Keller-Segel equation is its connection to the global regularity of other equations that potentially have singular behaviors, such as some fluid equations. In fact, such regularizing phenomenon is the main factor that competes with main blowup mechanisms in some of the most classical fluid equations. For 3D Euler equations, a lack of/a weakened advection term is sufficient to induce finite-time singularities, see \cite{constantin1986note,elgindi2021finite}. The same is also observed in 3D Navier-Stokes equations, see \cite{lei2009stabilizing,hou2011singularity}. The crucial role played by advection stands out even more when one considers certain 1D models capturing Euler and Navier-Stokes dynamics, see \cite{chen2021regularity,jia2019gregorio}. The Keller-Segel equation much resembles these fluid equations since it is also a critical equation that exhibits potential singularities. Hence, studying the regularization by fluid advection in Keller-Segel equation contributes to a more substantial understanding in the problem of global existence for many fluid equations.}

{In this work, we plan to investigate \eqref{eq:ksgen} in the case where the fluid has large viscosity and exerts significant buoyancy onto the bacteria. We remark that not only micro-organisms generically live in viscous environment (see \cite{purcell1977life}), but also intriguing biological phenomena occur in a viscous fluid and in a buoyancy-dominating regime. For example, \cite{tuval2005bacterial} studies an interesting phenomenon called chemotactic Boycott effect in this regime, which describes the sedimentation process of both bacteria and fluid. In numerical experiments, the canonical choice of the coupling fluid is the Navier-Stokes flow with parameters $\ra\sim 10^6$ and $\re^{-1}\sim 10^3$. Hence, while the chemotaxis model used in \cite{tuval2005bacterial} is different from \eqref{eq:ksgen}, the parameter regime $\ra \gg \frac{1}{\re} \gg 1$ is still highly relevant in the study of chemotaxis in viscous environment. Moreover, from an analytical perspective, this is also a first step to showing the blowup suppression by Navier-Stokes flow with only large buoyancy and moderate viscosity.} 

To make the problem more amenable to analysis, it is convenient to write $B = \re^{-1}$ and $g = \ra\cdot\re$. Another handy transformation is that in periodic channel $\Omega$, one can reduce the Lions boundary condition in the following way: the no-flux boundary is equivalent to $u_2|_{\p_\Omega} = 0$. Since $\p_1$ is the tangential derivative on $\p\Omega$, we have the extra boundary constraint $\p_1^k u_2|_{\p\Omega} = 0$ for all $k \in \N$. \red{Thus, the Lions boundary condition can be equivalently written as
$$
u_2|_{\p\Omega} = 0,\quad \omega|_{\p\Omega} = \p_2 u_1|_{\p\Omega} = 0.
$$
}
To sum up, we will work with the following equivalent, yet more convenient form of \eqref{eq:ksgen}:
\begin{subequations}
\label{eq:ksgen2}
\begin{equation}
    \label{eq:ksstokesden2}
    \p_t \rho + u\cdot \nabla \rho -\Delta \rho + \divv(\rho\nabla(-\Delta_N)^{-1}(\rho- \rho_m)) = 0,\quad\text{ in }\Omega,
\end{equation}
\begin{equation}
    \label{eq:ksstokesvel2}
    \p_t u + u\cdot\nabla u - B\Delta u + \nabla p = Bg\rho (0,1)^T,\quad \divv u = 0,\quad\text{ in }\Omega,
\end{equation}
\begin{equation}
    \label{eq:lions}
    \p_2 \rho\big|_{\p \Omega} = 0,\; u_2|_{\p\Omega} = 0,\; \p_2 u_1|_{\p\Omega} = 0,
\end{equation}
\begin{equation}
    \label{eq:init}
    \rho(0,x) = \rho_0(x) \ge 0,\; u(0,x) = u_0(x).
\end{equation}
\end{subequations}

\subsection{Main Theorem}
The main result of this work is that the {\textit{regular solution}}\footnote{{Roughly speaking, the solution pair $(\rho,u)$ is called a \textit{regular solution} if it is $C^\infty$ in space and for positive times. We will give a rigorous definition in Section \ref{sec:lwp}.}} of \eqref{eq:ksgen2} with arbitrary large mass is in fact globally regular given both parameters $B$ and $g$ sufficiently large, whose sizes only depend on initial data $(\rho_0, u_0)$. The precise statement of the main result is given as follows:

\red{
\begin{thm}
    \label{thm:stokeswp}
    Suppose initial data $(\rho_0,u_0)$ with $\rho_0 \in H^1$ nonnegative and $u_0 \in V$, where $V$ is the class of $H^1$, divergence-free vector fields that satisfy no-flux boundary condition\footnote{The exact definition of $V$ will be given in Section \ref{subsect:lions}.}. There exists a couple $(g_1,B_1) = (g_1(\rho_m, \|\rho_0 - \rho_m\|_{L^2}), B_1(\rho_m, \|\rho_0 - \rho_m\|_{L^2}, \|u_0\|_1))$ such that if $g \ge g_1$, $B \ge B_1g^2e^{g^2}$, \eqref{eq:ksgen2} admits a unique, regular, and global-in-time solution.
\end{thm}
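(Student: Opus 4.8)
The plan is to set up a continuity/bootstrap argument built around a single scalar quantity that the ``static problem'' is known to stabilize — namely, to track the time evolution of $\|\rho - \rho_m\|_{L^2}$ (or an equivalent free-energy-type functional) and show that, under the stated smallness of $B^{-1}g^{-2}e^{-g^2}$ and largeness of $g$, this norm can never leave a controlled ball. Concretely, I would first invoke the local well-posedness theory promised in Section~\ref{sec:lwp}: from $(\rho_0,u_0)\in H^1\times V$ one gets a unique regular solution on a maximal interval $[0,T_*)$, and regularity propagates globally provided, say, $\int_0^{T}\|\rho(t)\|_{L^\infty}^2\,dt<\infty$ for every $T<T_*$, or equivalently provided $\|\rho(t)-\rho_m\|_{L^2}$ stays bounded. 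So the entire theorem reduces to an a priori bound on $\|\rho-\rho_m\|_{L^2}$ on the maximal interval, with a threshold depending only on the data.

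Next I would derive the differential inequality for $\frac{1}{2}\frac{d}{dt}\|\rho-\rho_m\|_{L^2}^2$. Testing \eqref{eq:ksstokesden2} against $\rho-\rho_m$ produces the dissipation $-\|\nabla\rho\|_{L^2}^2$, the chemotactic production term (controlled in 2D by $\|\rho-\rho_m\|_{L^2}^2\|\nabla\rho\|_{L^2}^2$ via Gagliardo–Nirenberg, the usual $8\pi$-type interpolation), and the transport contribution which, after integration by parts and using $\operatorname{div}u=0$ together with the no-flux/Lions conditions \eqref{eq:lions}, becomes $\int \rho\, u\cdot\nabla\rho$-type terms. The key structural point — the ``norm-stabilizing property of the static problem'' referenced in the abstract — is that the buoyancy forcing $Bg\rho(0,1)^T$ drives $u$ toward the solution of a quasi-stationary (Stokes-type, since $B$ is large the time derivative and nonlinearity in \eqref{eq:ksstokesvel2} are subdominant) elliptic problem, and for that static velocity field the coupling term $\int \rho\,u_2$ carries a definite sign / coercive structure that dominates the chemotactic production once $g$ is large. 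I would make this precise by writing $u = u_{\mathrm{stat}} + v$, where $u_{\mathrm{stat}}$ solves $-B\Delta u_{\mathrm{stat}}+\nabla p_{\mathrm{stat}} = Bg\rho(0,1)^T$ with $\operatorname{div}u_{\mathrm{stat}}=0$ and the boundary conditions, and $v$ absorbs $\partial_t u$, $u\cdot\nabla u$. One then uses the static-problem estimate (from an earlier section) to get $\|\rho-\rho_m\|_{L^2}^2 \le -\frac{c\,g}{\text{(something)}}\|\rho-\rho_m\|_{L^2}^2 + \cdots$ in the favorable regime, while the error $v$ is controlled by $B^{-1}$-small quantities times powers of $g$ and $e^{g^2}$ — this is exactly where the hypothesis $B\ge B_1 g^2 e^{g^2}$ is consumed: it guarantees $\|v\|$ is small enough that the perturbation cannot overturn the coercivity gained from $u_{\mathrm{stat}}$.

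The main obstacle, and the step I expect to require the most care, is controlling the perturbation $v$ — i.e., showing that the genuinely dynamic part of the Navier–Stokes flow (the terms $\partial_t u$ and $u\cdot\nabla u$ that distinguish this from the Stokes case in \cite{hu2023stokes}) stays quantitatively negligible relative to the stabilizing static flow. This needs an energy estimate for $v$ closed simultaneously with the estimate for $\rho$: the forcing for $v$ is $\partial_t u_{\mathrm{stat}}$, which is $\sim Bg\,\partial_t\rho$-driven and hence involves $\Delta\rho$ and the chemotactic nonlinearity, so one has to trade viscous dissipation $B\|\nabla v\|_{L^2}^2$ against these, and the quadratic nonlinearity $u\cdot\nabla u$ forces a two-dimensional energy-type argument (using $\omega = \nabla^\perp\cdot u$ and the vorticity equation with $\omega|_{\partial\Omega}=0$ to get good $H^1$/$H^2$ control of $u$). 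The factor $e^{g^2}$ in the hypothesis strongly suggests that at some point a Gronwall-type estimate in $g$ is run — presumably the a priori control on $v$ or on an auxiliary energy degrades exponentially in $g$ before the stabilization kicks in — so the bootstrap has to be arranged so that $B$ is taken large enough (polynomially times $e^{g^2}$) \emph{after} $g$ is fixed. Once $v$ is shown to be $o(1)$ in the right norm, the differential inequality for $\|\rho-\rho_m\|_{L^2}^2$ closes: the static coercivity plus Gagliardo–Nirenberg gives $\frac{d}{dt}\|\rho-\rho_m\|_{L^2}^2 \le 0$ whenever $\|\rho-\rho_m\|_{L^2}$ reaches a data-determined threshold, so the norm never exceeds $\max\{\|\rho_0-\rho_m\|_{L^2}, \text{threshold}\}$, the blowup criterion is never triggered, $T_*=\infty$, and global regularity follows.
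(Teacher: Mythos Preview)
Your overall architecture --- local theory plus an $L^2$ regularity criterion, a bootstrap on $\|\rho-\rho_m\|_{L^2}$, treating the full problem as a perturbation of a static Stokes problem, and absorbing the dynamic Navier--Stokes residual at the cost of $B^{-1}g^2e^{g^2}$ via Gr\"onwall --- matches the paper. But the mechanism by which the static problem stabilizes the $L^2$ norm is misidentified, and this is the heart of the argument. In the $L^2$ energy identity for $\rho$, the transport term $\int_\Omega (u\cdot\nabla\rho)(\rho-\rho_m)\,dx$ vanishes identically for any divergence-free $u$ with $u_2|_{\partial\Omega}=0$, so there is no ``coupling term $\int\rho\,u_2$'' in the $\frac{d}{dt}\|\rho-\rho_m\|_{L^2}^2$ balance and no sign to exploit there; the differential inequality you get is velocity-independent and identical to that of the uncoupled Keller--Segel equation. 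The stabilization is entirely indirect: from the Biot--Savart law $u=-g\nabla^\perp(-\Delta_D)^{-2}\partial_1\rho$ one computes $\int_\Omega\rho u_2\,dx=g\|(-\Delta_D)^{-1}\partial_1\rho\|_{L^2}^2$, and testing the $\rho$-equation against $x_2$ bounds the time integral of the left side by $C(\rho_m)$, forcing the anisotropic mixing norm $\|(-\Delta_D)^{-1}\partial_1\rho\|_{L^2}$ to be $O(g^{-1/2})$ on average. This smallness feeds an improved Nash inequality and yields a dichotomy: either the $x_1$-average $\bar\rho$ dominates (and a one-dimensional damping estimate applies), or the oscillatory part $\tilde\rho$ dominates (and Nash forces $\int\|\nabla\rho\|_{L^2}^2\,dt$ to exceed its a priori bound). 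There is no pointwise-in-time inequality of the form $\frac{d}{dt}\|\rho-\rho_m\|_{L^2}^2\le -cg\|\rho-\rho_m\|_{L^2}^2+\cdots$; the argument is time-integrated and case-split.

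Second, the paper's comparison scheme differs from yours in a way that matters. Rather than decomposing only $u=u_{\mathrm{stat}}[\rho]+v$ instantaneously, the paper introduces an auxiliary solution $(\rho_s,u_s)$ of the \emph{coupled} static problem launched from $\rho(\mathcal{T}_1,\cdot)$ at the first time the $L^2$ norm reaches the threshold $N_0$, and tracks $r=\rho-\rho_s$. This is cleaner because the norm-stabilizing property (Proposition~\ref{prop:barrier}) is proved only for exact solutions of the static problem; with your decomposition, $\rho$ itself does not solve that problem, so you could not invoke the proposition directly and would need a perturbed version of it. In the paper's route, Proposition~\ref{prop:barrier} applies verbatim to $\rho_s$, and the Gr\"onwall on the $r$-equation (whose exponent is $\int(g^2\|\rho\|_{L^2}^2+\cdots)\,dt\sim g^2$, which is where the $e^{g^2}$ arises) keeps $\|r\|_{L^2}^2\le N_0/4$; the triangle inequality then closes the bootstrap.
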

}

\begin{rmk}
    We compare this result with \cite{hu2023stokes}. Firstly, the two works treat different scenarios: \cite{hu2023stokes} investigates a Keller-Segel equation with zero Dirichlet boundary condition, while we study the Keller-Segel equation with classical zero Neumann boundary conditions. Secondly, the blowup-suppressing mechanism in the two works are different precisely due to the first difference above. In \cite{hu2023stokes}, the blowup is suppressed by an increased interaction between cells and the cold boundary induced by strong buoyancy force. This mechanism forces the $L^2$ norm of cell density to drop below the mass threshold in short time. In our work, we instead use a dimension reduction mechanism similar to that in \cite{hu2023suppression}. It stabilizes the $L^2$ norm of cell density around a possibly large number.
\end{rmk}

\subsection{Main Strategy of Proof}\label{subsect:strat}
In this section, we would like to first emphasize the main difficulty of proving Theorem \ref{thm:stokeswp}, after which we briefly sketch the idea of proving this main result.

The main obstacle is that, given the appearance of time derivative and advection term in the fluid equation \eqref{eq:ksstokesvel2}, the problem suffers from a loss of explicit velocity law, which directly connects density $\rho$ and fluid velocity $u$. In general, such loss prevents us from obtaining precise control of $u$ in a way similar to \cite{hu2023suppression}. In the parameter regime specified in Theorem \ref{thm:stokeswp}, however, we will show that \eqref{eq:ksgen2} is a suitable perturbation of a static problem \eqref{eq:ksstatstokes}, which actually has a  velocity law that can be analyzed with tools developed in \cite{hu2023suppression}. Thus, we arrange the proof into the following steps.

\noindent\textbf{Step 1. Study of Static Problem \eqref{eq:ksstatstokes}.} We first discuss the intuition of deriving the static problem \eqref{eq:ksstatstokes}. We start with dividing on both sides of \eqref{eq:ksstokesvel2} by $B$. This yields
$$
\frac{1}{B}(\p_t u + u\cdot\nabla u) - \Delta u + \frac{1}{B}\nabla p = g\rho (0,1)^T,\quad \divv u = 0.
$$
In the regime of large $B$, it is natural to treat the term $\frac{1}{B}(\p_t u + u\cdot\nabla u)$ perturbatively. In fact, it is reasonable to guess that the following velocity law dominates the velocity evolution:
{
\begin{equation}\label{eq:ksstatvel}
    -\Delta u + \nabla P = g\rho (0,1)^T,\; \divv u = 0,
\end{equation}
where we write $P = B^{-1}p$. We remark that the scaling $B^{-1}$ in front of the pressure is irrelevant due to incompressibility.} This motivates us to first understand the following system, which we will later refer to as the \textbf{static problem}:
\begin{subequations}
    \label{eq:ksstatstokes}
    \begin{equation}
        \p_t \rho + u\cdot \nabla \rho -\Delta \rho + \divv(\rho\nabla(-\Delta_N)^{-1}(\rho- \rho_m)) = 0,
    \end{equation}
    \begin{equation}
    \label{eq:ksstatvel2}
        -\Delta u + \nabla p = g\rho (0,1)^T,\quad \divv u = 0,
    \end{equation}
    \begin{equation}
    \label{eq:ksstatbc}
        \p_2 \rho\big|_{\p \Omega} = 0,\; u_2|_{\p\Omega} = 0,\; \p_2 u_1|_{\p\Omega} = 0,\; \rho(0,x) = \rho_0(x) \ge 0. 
    \end{equation}
\end{subequations}
We remark that we do not specify the initial datum for $u$. Indeed, the problem \eqref{eq:ksstatstokes} is closed by itself, due to a remarkable property of \eqref{eq:ksstatvel2}-\eqref{eq:ksstatbc} that one may reduce them to the following compact form:
\begin{equation}
    \label{eq:bs}
     u = -g\nabla^\perp(-\Delta_D)^{-2}\p_1\rho,
\end{equation}
thanks to Proposition \ref{prop:statstokes}. Here $-\Delta_D$ denotes homogeneous Dirichlet Laplacian, and $(-\Delta_D)^{-2} = (-\Delta_D)^{-1}\circ (-\Delta_D)^{-1}$. This renders no degree of freedom over choosing the initial datum for $u$ in problem \eqref{eq:ksstatstokes}. We will work with the more convenient form \eqref{eq:bs} instead of the original formulation. 

We also remark that the static problem \eqref{eq:ksstatstokes} is reminiscent of the Keller-Segel equation coupling to Darcy's law studied in \cite{hu2023suppression}, whose velocity law is given by:
\begin{equation}\label{darcy}
u = g\nabla^\perp (-\Delta_D)^{-1}\p_1 \rho.
\end{equation}
In \cite{hu2023suppression}, the authors exploited an anisotropic mixing effect that is sufficient to prevent chemotactic blowup \red{when $\rho$ achieves a large $L^2$ norm}. However, the velocity law \eqref{eq:bs} yields a more regular fluid flow. The extra regularity induces a weaker mixing effect {comparing to \eqref{darcy}}. To remedy this issue, we introduce a large $g$ so that the flow has large amplitude to generate sufficient mixing effect. \red{This treatment differs our work from \cite{hu2023suppression}.} In the large amplitute setting, we can achieve a crucial norm stabilizing property of \eqref{eq:ksstatstokes}. That is, the $L^2$ norm of $\rho$ stabilizes around a level $N_0$, whose size only depends on the initial datum $\rho_0$ in the regime of $g$ sufficiently large. The abovementioned property is formalized in the statement below:
\begin{prop}[Norm-Stabilizing Property]
\label{prop:barrier}
    Consider problem \eqref{eq:ksstatstokes} with nonnegative initial datum $\rho_0 \in H^1$. Assume that $t_0 \ge 0$ is inside the lifespan of the unique regular local solution $\rho(t,x)$. Then there exist $N_0 = N_0(\rho_m, \|\rho_0 - \rho_m\|_{L^2})$, a time $T_* = T_*(N_0, \rho_m)$, and $g_0 := g_0(N_0, \rho_m) > 0$, such that the following statement holds: if $\|\rho(t_0) - \rho_m\|_{L^2}^2 \le {\frac{N_0}{2}}$, then for all $g \ge g_0$, the regular solution $\rho(t,x)$ can be continued in interval $[t_0, t_0 + T_*]$ with estimate
    \begin{equation}
        \label{est:barrier1}
        \sup_{t \in [t_0, t_0 + T_*]}\|\rho(t) - \rho_m\|_{L^2}^2 \le {N_0}.
    \end{equation}
    Moreover, there exists \red{a time instance} $T \in \left[t_0 + \frac{T_*}{2}, t_0 + T_*\right]$ such that
    \begin{equation}
        \label{est:barrier2}
        \|\rho(T) - \rho_m\|_{L^2}^2 \le \frac{N_0}{8}.
    \end{equation}
\end{prop}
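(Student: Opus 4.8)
The plan is to work with a potential-energy functional adapted to the static problem and to extract from its time derivative a coercive term whose smallness forces an anisotropic, essentially one-dimensional structure onto $\rho$. Concretely, I would follow the strategy of \cite{hu2023suppression}: introduce the functional
\begin{equation*}
    \mathcal{E}(t) = \frac{1}{2}\|\rho(t) - \rho_m\|_{L^2}^2,
\end{equation*}
and compute $\frac{d}{dt}\mathcal{E}$ along solutions of \eqref{eq:ksstatstokes}. The diffusion gives $-\|\nabla\rho\|_{L^2}^2$, the Keller-Segel nonlinearity contributes a term controlled (via Gagliardo-Nirenberg / the logarithmic Hardy-Littlewood-Sobolev type estimates already used for \eqref{eq:ks}) by $\|\rho-\rho_m\|_{L^2}$ and $\|\nabla\rho\|_{L^2}$, and the transport term $-\int u\cdot\nabla\rho\,(\rho-\rho_m)$, after using $\divv u=0$, vanishes. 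The crucial extra input is that $u$ is \emph{not} free: by \eqref{eq:bs}, $u = -g\nabla^\perp(-\Delta_D)^{-2}\partial_1\rho$, so the size of $\rho$ is tied to $u$. I would therefore run a \emph{second} energy-type identity, testing the density equation against $(-\Delta_D)^{-2}\partial_1\rho$ or a suitable potential of $u$ itself, to produce a dissipation term proportional to $g$ times a negative-order norm of $\partial_1\rho$, e.g. $g\|\partial_1\rho\|_{H^{-s}}^2$ for an appropriate $s$ (the precise order dictated by the two extra inverse Dirichlet Laplacians in \eqref{eq:bs} compared with \eqref{darcy}, which is why $g$ must be taken large to compensate the lost mixing strength).

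The structure of the argument would then be: first, a short-time a priori bound. Starting from $\|\rho(t_0)-\rho_m\|_{L^2}^2 \le N_0/2$, standard parabolic/Keller-Segel local theory (the same local well-posedness invoked for Proposition \ref{prop:barrier}) gives a time $T_*$, depending only on $N_0$ and $\rho_m$, on which $\|\rho(t)-\rho_m\|_{L^2}^2 \le N_0$ persists — this is \eqref{est:barrier1}, and it is the ``cheap'' part that does not yet use $g$. Second, and this is where largeness of $g$ enters, I would argue that on $[t_0, t_0+T_*]$ the accumulated dissipation forces $\|\partial_1\rho\|_{H^{-s}}$ to be small on a large-measure subset of times: integrating the combined energy inequality over the interval and using that $\mathcal{E}$ stays bounded by $N_0$, the term $g\int_{t_0}^{t_0+T_*}\|\partial_1\rho(t)\|_{H^{-s}}^2\,dt$ is bounded by a constant independent of $g$, hence the time-average of $\|\partial_1\rho\|_{H^{-s}}^2$ is $O(1/g)$. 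By Chebyshev, there is a set of times of measure at least $3T_*/4$ (say) on which $\|\partial_1\rho(t)\|_{H^{-s}}^2 \lesssim 1/g$.

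Third, on such a ``good'' time I would upgrade smallness of $\|\partial_1\rho\|_{H^{-s}}$ to smallness of the $L^2$ fluctuation. Writing $\rho - \rho_m = (\bar\rho - \rho_m) + (\rho - \bar\rho)$ where $\bar\rho(x_2) = \frac{1}{2\pi}\int_\T \rho\,dx_1$ is the horizontal average, the second piece $\rho-\bar\rho$ has zero horizontal mean, so it is controlled by $\partial_1\rho$ in negative norms together with an $H^1$ bound on $\rho$ via interpolation: $\|\rho-\bar\rho\|_{L^2} \lesssim \|\partial_1\rho\|_{H^{-s}}^\theta \|\rho\|_{H^1}^{1-\theta}$ for a suitable $\theta\in(0,1)$. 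The $H^1$ bound on $\rho$ comes from parabolic smoothing on the interval $[t_0,t_0+T_*]$ (again depending only on $N_0,\rho_m$). For the horizontal average $\bar\rho$, I would use that it solves a one-dimensional Keller-Segel-type equation on $[0,\pi]$ with Neumann conditions — a problem that is subcritical/globally regular in 1D — and whose $L^2$ fluctuation is controlled (it cannot be large unless the full $L^2$ norm is, plus a 1D-smoothing gain), so that $\|\bar\rho - \rho_m\|_{L^2}^2$ can be made $\le N_0/16$ by choosing $N_0$ appropriately relative to $\rho_m$ and $\|\rho_0-\rho_m\|_{L^2}$ — this is exactly the step that \emph{defines} $N_0$. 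Combining, for $g \ge g_0(N_0,\rho_m)$ large enough to make $\|\rho-\bar\rho\|_{L^2}^2 \le N_0/16$ at a good time $T$, and noting the good-time set meets $[t_0+T_*/2, t_0+T_*]$ (its measure exceeds $T_*/2$, so it cannot avoid that half-interval), we land a $T$ with $\|\rho(T)-\rho_m\|_{L^2}^2 \le N_0/8$, which is \eqref{est:barrier2}.

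The main obstacle I anticipate is the third step: closing the loop between the negative-order smallness $\|\partial_1\rho\|_{H^{-s}}\lesssim g^{-1/2}$ and the $L^2$ fluctuation, because the interpolation inequality costs a power of $\|\rho\|_{H^1}$, and the $H^1$ norm is only bounded in terms of $N_0$ (not small), so one needs $g$ large \emph{in a way that depends on $N_0$} — the quantifier order $N_0 \to T_* \to g_0$ must be respected carefully, and one must verify the 1D reduction for $\bar\rho$ genuinely produces a fluctuation bound below $N_0/16$ that is consistent with the \emph{definition} of $N_0$ (i.e., that the choice of $N_0$ is not circular). A secondary technical point is making the combined energy identity rigorous given that \eqref{eq:bs} involves $(-\Delta_D)^{-2}$ while the density has Neumann boundary data — one must check the relevant integrations by parts and that $u$ inherits enough regularity and the boundary conditions \eqref{eq:ksstatbc} for the transport term to vanish and for the coercive term to have the claimed sign; this is where the Lions boundary condition and Proposition \ref{prop:statstokes} are used.
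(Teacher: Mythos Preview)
Your overall architecture is right: obtain \eqref{est:barrier1} cheaply from the $L^2$ energy balance, then exploit the Biot--Savart law \eqref{eq:bs} to show that a negative-order norm of $\partial_1\rho$ is $O(g^{-1})$ in time-integral, decompose $\rho-\rho_m=(\bar\rho-\rho_m)+\tilde\rho$, and use the smallness to handle $\tilde\rho$. The paper follows this same outline, with two organizational differences worth noting: the key smallness is extracted by testing the $\rho$-equation against the multiplier $x_2$ (yielding $\int_\Omega\rho u_2\,dx = g\|(-\Delta_D)^{-1}\partial_1\rho\|_{L^2}^2$ directly from $u_2=-g\partial_1(-\Delta_D)^{-2}\partial_1\rho$), rather than testing against a potential of $u$; and the argument is organized as a contradiction/dichotomy on the time-integral of $\|\tilde\rho\|_{L^2}^2$, using a time-integrated improved Nash inequality (Proposition~\ref{prop:concentration}, Lemma~\ref{lem:improvedNash}) to force $\int\|\nabla\rho\|_{L^2}^2\,dt$ large and contradict the a~priori gradient bound \eqref{est:naiveL2}, rather than a Chebyshev good-time selection plus pointwise interpolation against a parabolic-smoothing $H^1$ bound.

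The genuine gap is your treatment of $\bar\rho-\rho_m$. You write that $\bar\rho$ ``solves a one-dimensional Keller--Segel-type equation'' whose fluctuation ``can be made $\le N_0/16$ by choosing $N_0$ appropriately.'' But in the contradiction scenario where $\|\rho-\rho_m\|_{L^2}^2>N_0/8$ and $\tilde\rho$ is small, one has $\|\bar\rho-\rho_m\|_{L^2}^2\gtrsim N_0$; choosing $N_0$ larger makes this \emph{larger}, not smaller, so the proposed mechanism is backwards. Moreover $\bar\rho$ does not satisfy a closed 1D equation --- the horizontal average of the advection and aggregation terms retains coupling to $\tilde\rho$ --- so 1D subcriticality is not directly available. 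The paper bypasses this entirely via a \emph{structured} energy inequality (Proposition~\ref{lem:naive}, estimate \eqref{est:mainenergy}), cited from \cite{hu2023suppression}, which already contains a damping term $-\frac{1}{C_1\rho_m^4}\|\bar\rho-\rho_m\|_{L^2}^6$ on the right-hand side of $\frac{d}{dt}\|\rho-\rho_m\|_{L^2}^2$. When $\|\bar\rho-\rho_m\|_{L^2}^2$ is of order $N_0$, integrating this over an interval of length $T_*\sim N_0^{-1}$ produces a drop of order $N_0^2/\rho_m^4$; choosing $N_0$ large enough (this is precisely the role of condition \eqref{N0choice2}) makes the drop exceed $2N_0$ and forces $\|\rho-\rho_m\|_{L^2}^2$ below $N_0/8$, yielding the contradiction. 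The $L^6$ coercivity of $\bar\rho-\rho_m$ in the energy balance --- not 1D well-posedness --- is the missing ingredient in your plan, and you correctly flagged this step as the one where circularity threatens.
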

We will devote Section \ref{sec:stat} to the proof of this key result. As a direct consequence, we will also see that the static problem \eqref{eq:ksstatstokes} is globally regular.

\noindent\textbf{Step 2. Continuation Argument.} In fact, one may view Proposition \ref{prop:barrier} as a damping mechanism induced by the static problem \eqref{eq:ksstatstokes}. Keeping in mind that the full problem \eqref{eq:ksgen2} is a perturbation of \eqref{eq:ksstatstokes}, we employ a bootstrap argument to show that the solution to the full problem is $L^2$-close to the solution of the static problem for all times. Fixing initial condition $\rho_0$ and $u_0$, we choose large $N_0$ as in Proposition \ref{prop:barrier}. Define $\calT_0$ to be the largest time such that the following bootstrap assumption holds:
\begin{equation}
\label{bootstrap}
    \sup_{0 \le t \le \calT_0}\|\rho(t) - \rho_m\|_{L^2}^2 \le 9N_0.
\end{equation}
Our goal is to prove the following improved estimate:
\red{
\begin{prop}
    \label{prop:bootstrap}
    Consider initial data $\rho_0 \in H^1$ that is nonnegative and $u_0 \in V$. There exists a couple $(g_1,B_1) = (g_1(\rho_m, \|\rho_0 - \rho_m\|_{L^2}), B_1(\rho_m, \|\rho_0 - \rho_m\|_{L^2}, \|u_0\|_1))$ such that if $g \ge g_1$, $B \ge B_1g^2e^{g^2}$, the following improved estimate holds:
    \begin{equation}
        \label{est:improved}
        \sup_{0 \le t \le \calT_0}\|\rho(t) - \rho_m\|_{L^2}^2 \le 5N_0.
    \end{equation}
\end{prop}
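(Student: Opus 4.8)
The plan is to run a bootstrap/continuation argument that compares the full solution $\rho$ of \eqref{eq:ksgen2} with the solution $\rho^{\mathrm{st}}$ of the static problem \eqref{eq:ksstatstokes} started from the same data, exploiting the norm-stabilizing damping of Proposition \ref{prop:barrier}. Fix $N_0$, $T_*$, $g_0$ from Proposition \ref{prop:barrier}. The first step is to record the two basic energy identities. For the full problem, multiply \eqref{eq:ksstokesden2} by $\rho-\rho_m$ and integrate: this gives $\frac{d}{dt}\|\rho-\rho_m\|_{L^2}^2 = -2\|\nabla\rho\|_{L^2}^2 + 2\int \rho\,\nabla(-\Delta_N)^{-1}(\rho-\rho_m)\cdot\nabla\rho$ (the transport term drops by incompressibility and $u\cdot n|_{\p\Omega}=0$), exactly as in the $u\equiv 0$ Keller--Segel computation; the nonlinear term is estimated by the Gagliardo--Nirenberg/$L^1$-critical bound, so on any interval where $\|\rho-\rho_m\|_{L^2}^2\le 9N_0$ one has a Gronwall-type control $\|\rho(t)-\rho_m\|_{L^2}^2\le C(N_0,\rho_m)$ on a short time $[t_0,t_0+\tau_0]$ with $\tau_0=\tau_0(N_0,\rho_m)$ independent of $g,B$. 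This already confines the full solution locally; the point is to propagate the sharper bound $5N_0$ globally.

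The second and central step is the velocity comparison. Write $u = u^{\mathrm{st}} + v$, where $u^{\mathrm{st}} = -g\nabla^\perp(-\Delta_D)^{-2}\p_1\rho$ is the static velocity law \eqref{eq:bs} evaluated at the \emph{actual} density $\rho$, and $v$ is the correction. Subtracting \eqref{eq:ksstatvel2} (with $\rho$) from \eqref{eq:ksstokesvel2} and dividing by $B$ shows that $v$ solves a forced Stokes-type problem with right-hand side $-\frac{1}{B}(\p_t u + u\cdot\nabla u)$, so $v = -\mathbb{S}\big[\frac1B(\p_t u + u\cdot\nabla u)\big]$ for the appropriate solution operator $\mathbb{S}$ of \eqref{eq:ksstatvel}. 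To bound $v$ I will run a standard $H^1$ energy estimate on $u$ itself from \eqref{eq:ksstokesvel2}: testing against $u$ and against $-\Delta u$, using the Lions boundary conditions \eqref{eq:lions} (which make the boundary terms vanish and give the Stokes estimate $\|u\|_{H^2}\lesssim B\|B g\rho(0,1)^T + \text{lower order}\|_{L^2} = \ldots$), together with the buoyancy forcing $Bg\rho(0,1)^T$ controlled by $\|\rho-\rho_m\|_{L^2}\le 3\sqrt{N_0}$ on the bootstrap interval. The outcome should be a bound of the form $\|u(t)\|_{H^1}\le C(N_0,\rho_m,g) + \|u_0\|_{H^1}$ and, crucially, an integrated bound on $\p_t u$ and on $\|u\|_{H^2}$; feeding these into $v = -\mathbb{S}[\frac1B(\p_t u + u\cdot\nabla u)]$ yields $\|v(t)\|_{L^2}\lesssim \frac{1}{B}\,\mathrm{poly}(g)\,e^{c g^2}\cdot(\text{data})$ — this is precisely where the hypothesis $B\ge B_1 g^2 e^{g^2}$ is consumed, making $\|v\|$ as small as we wish uniformly in time. (The exponential $e^{g^2}$ enters because the static velocity law has amplitude $g$, and the Gronwall constants in the static energy estimates — e.g. controlling $\|u^{\mathrm{st}}\|$ and the drift it produces in $\|\rho\|_{L^2}^2$ — grow like $e^{cg^2}$ over the time $T_*$.)

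The third step closes the loop. On the bootstrap interval $[0,\calT_0]$, the density equation for $\rho$ differs from the static one only through the extra transport $v\cdot\nabla\rho$; testing the difference against $\rho-\rho_m$ contributes $-\int v\cdot\nabla\rho\,(\rho-\rho_m) = \frac12\int \rho\,\divv v\cdots$ — wait, since $\divv v = 0$ this term is also $\int (\rho-\rho_m)\, v\cdot\nabla(\rho-\rho_m) = 0$; the genuine discrepancy is therefore only in how the nonlocal chemotactic term sees the $v$-perturbed density, which I control by a Gronwall estimate against $\|v\|_{L^\infty_t L^2}$ over the length-$T_*$ windows. Concretely: partition $[0,\calT_0]$ into windows of length $T_*$; on each window Proposition \ref{prop:barrier} applied to the static solution gives that $\|\rho^{\mathrm{st}}\|_{L^2}^2$ drops from $\le N_0/2$ to $\le N_0/8$ at some interior time, staying $\le N_0$ throughout; the perturbation estimate gives $\sup|\,\|\rho\|_{L^2}^2 - \|\rho^{\mathrm{st}}\|_{L^2}^2| \le \epsilon(B,g)$ with $\epsilon\to 0$ as $B/(g^2e^{g^2})\to\infty$; choosing $B_1$ so that $\epsilon\le N_0$ forces $\|\rho(t)-\rho_m\|_{L^2}^2\le 2N_0\le 5N_0$ on that window and $\le N_0/8 + \epsilon \le 2N_0 \le N_0/2\cdot 9$... more carefully, $\le N_0/8 + N_0 < N_0/2$ requires a smaller $\epsilon$, so I take $B_1$ large enough that $\epsilon\le N_0/8$, giving $\|\rho\|_{L^2}^2\le N_0/4 \le N_0/2$ at the hand-off time, which lets the next window start. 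Inductively this gives $\sup_{[0,\calT_0]}\|\rho-\rho_m\|_{L^2}^2\le N_0 + \epsilon \le 2N_0 \le 5N_0$, the improved estimate, uniformly in $\calT_0$; the buoyancy threshold on $B$ and the lower bound $g\ge g_1:=\max(g_0, \cdot)$ are chosen jointly to make all of this consistent. The main obstacle is the velocity comparison of Step 2: one must obtain a time-\emph{uniform} bound on $\|v\|_{L^2}$ despite having no explicit velocity law for the full problem, and this requires carefully tracking how the $e^{g^2}$-type Gronwall constants from the static analysis interact with the $\frac1B$ gain — getting the quantitative dependence $B\ge B_1 g^2 e^{g^2}$ right, rather than a worse tower, is the delicate part. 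A secondary technical point is ensuring all the short-time/window energy estimates for the full problem hold with constants independent of $B$ (they do, because dividing \eqref{eq:ksstokesvel2} by $B$ isolates the small parameter), and that the Lions boundary conditions are genuinely used to kill all boundary integrals in the $u$-energy estimates.
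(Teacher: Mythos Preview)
Your overall strategy matches the paper's: decompose as static part plus correction, bound the correction by $1/B$ times bounds on $\partial_t u$ and $u\cdot\nabla u$, then leverage Proposition~\ref{prop:barrier} on windows of length $T_*$ and iterate. But two points are genuine gaps.

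First, Step~3 is confused. You observe that $\int v\cdot\nabla\rho\,(\rho-\rho_m)\,dx = 0$ by incompressibility, and conclude that ``the genuine discrepancy is only in how the chemotactic term sees the $v$-perturbed density''. This is a red herring: the vanishing of the transport term in the $\|\rho-\rho_m\|_{L^2}^2$ energy is exactly why that energy identity is velocity-independent and therefore cannot distinguish the full problem from the static one. The correct object is $r = \rho - \rho_s$, where $\rho_s$ solves the static problem with initial datum $\rho(\calT_1,\cdot)$ at the first time $\calT_1$ when $\|\rho-\rho_m\|_{L^2}^2$ reaches $N_0$. In the $\|r\|_{L^2}^2$ energy, the term $\int r\,(v\cdot\nabla\rho)\,dx$ does \emph{not} vanish; after integrating by parts it becomes $\int \rho\,(v\cdot\nabla r)\,dx$, controlled by $\|v\|_{2}\|\rho\|_{L^2}\|\nabla r\|_{L^2}$, and it is precisely the source term that feeds the $1/B$ smallness into the Gr\"onwall. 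The chemotactic nonlinearities contribute further terms to the Gr\"onwall coefficient; the $e^{Cg^2}$ factor arises because $\|v\|_{2}^2 \lesssim B^{-2}(\|\p_t u\|_{L^2}^2 + \|u\cdot\nabla u\|_{L^2}^2) + g^2\|r\|_{L^2}^2$ (the paper's $v$ also carries the $gr$ forcing), and the $g^2\|r\|_{L^2}^2$ piece, multiplied by $\|\rho\|_{L^2}^2$, integrates over $[\,\calT_1,\calT_1+T_*\,]$ to $Cg^2$.

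Second, and more seriously, you underestimate the $\p_t u$ bound. Testing \eqref{eq:ksstokesvel2} against $\p_t u$ on a window of length $T_*$ gives
\[
\int_{t_0}^{t_0+T_*}\|\p_t u\|_{L^2}^2\,dt \;\le\; B\|\nabla u(t_0)\|_{L^2}^2 + \int_{t_0}^{t_0+T_*}\!\!\big(B^2g^2\|\rho\|_{L^2}^2 + \|u\cdot\nabla u\|_{L^2}^2\big)\,dt,
\]
and the forcing integral is of order $B^2g^2 N_0 T_* \sim B^2 g^2$. Feeding $\int\|\p_t u\|_{L^2}^2 = O(B^2g^2)$ into $\|r\|_{L^2}^2 \lesssim B^{-2}e^{Cg^2}\int(\|\p_t u\|_{L^2}^2+\|u\cdot\nabla u\|_{L^2}^2)$ yields $\|r\|_{L^2}^2 = O(g^2 e^{Cg^2})$, which is not small no matter how large $B$ is; the bootstrap does not close. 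The paper recovers the missing factor of $B$ by differentiating \eqref{eq:ksstokesvel2} in time, testing by $\p_t u$, and running a two-timescale Duhamel argument on $\|\p_t u\|_{L^2}^2$ that exploits the $-B\Delta$ damping: after an initial layer (Lemma~\ref{lem:dtu0}, needed because $u_0$ carries no compatibility condition), one obtains $\sup\|\p_t u\|_{L^2}^2 \le CB^2g^2$ pointwise but $\int_{t_0}^{t_0+T_*}\|\p_t u\|_{L^2}^2\,dt \le CBg^2$ integrated (Lemma~\ref{lem:dtu}). It is this $Bg^2$, not $B^2g^2$, that makes $\|r\|_{L^2}^2 \lesssim g^2e^{Cg^2}/B$ small under the hypothesis $B\ge B_1 g^2 e^{g^2}$.
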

}
The main ingredient of this step is based on various nonlinear estimates pertaining to Navier-Stokes equation equipped with {Lions boundary conditions}. We devote Section \ref{sec:stokes} to the proof of these estimates as well as that of Proposition \ref{prop:bootstrap}.

\noindent\textbf{Step 3. Closing the Proof of Theorem \ref{thm:stokeswp}.}
In the final step, we finish the proof of Theorem \ref{thm:stokeswp} using estimate \eqref{est:improved}:
\begin{proof}[Proof of Theorem \ref{thm:stokeswp}]
    Using Proposition \ref{prop:bootstrap} and regularity criteria in Theorem \ref{thm:criteria}, $[0,\calT_0]$ is both open and closed. This implies that $\calT_0 = \infty$ and we have obtained global existence of \eqref{eq:ksgen2}.
\end{proof}

\subsection{Organization of the Paper}
{The paper is organized as follows. In Section \ref{sec:prelim}, we introduce notations and conventions used throughout this paper. We will also lay out a functional-analytic framework for studying Stokes problem equipping Lions boundary condition. In Section \ref{sec:lwp}, we state the local well-posedness of the full system \eqref{eq:ksgen2} and the static problem \eqref{eq:ksstatstokes}; we also show corresponding $L^2$-based regularity criteria in the same section. In Section \ref{sec:stat}, we study the static problem \eqref{eq:ksstatstokes} and prove the key Proposition \ref{prop:barrier}, from which the global well-posedness of \eqref{eq:ksstatstokes} follows. In Section \ref{sec:stokes}, we perform nonlinear estimates and prove the improved estimate \eqref{est:improved}. That is, we prove Proposition \ref{prop:bootstrap}.\\}

\textbf{Acknowledgement.} The author acknowledges partial support from NSF-DMS grants 2006372, 2306726, and 2304392. The author would like to cordially thank Professor Alexander Kiselev for his kind guidance and support throughout the years. He thanks Professor Siming He for numerous insightful discussions, suggestions, and comments on an early draft. The author also thanks the hospitality of the Chinese University of Hong Kong. Finally, the author thanks anonymous referees for their constructive comments which greatly improve the presentation of this paper.


\section{Preliminaries}\label{sec:prelim}
\subsection{Notation and Conventions}
We will denote the usual $L^p$ norm by $\|\cdot\|_{L^p}$ or $\|\cdot\|_{L^p(\cdot)}$. The second notation will be adopted if we would like to emphasize the domain. Similarly, we will denote Sobolev $H^s$ norm by $\|\cdot\|_{s}$ for $s \ge 1.$ We define the space $H^1_0(\Omega)$ to be the completion of $C^\infty_c(\Omega)$ with respect to norm $\|\cdot\|_1$, and consider its dual space $ H^{-1}_0(\Omega)$. Moreover, the following equivalence is standard:
$$
\|f\|_{H^{-1}_0} = C\left(\int_\Omega f(-\Delta_D)^{-1}f dx\right)^{1/2},
$$
where $C$ is a constant that only depends on domain $\Omega$. We also recall that $\Delta_D$ denotes homogeneous Dirichlet Laplacian. We simply regard $C = 1$ as the exact value of this constant is inessential.

Recall the mean of $\rho$ on $\Omega:$ $\rho_m(t) = \frac{1}{|\Omega|} \int_\Omega \rho(t,x)\,dx.$ We remark that $\rho_m(t)$ is nonnegative and a conserved quantity along evolution given the initial datum $\rho_0$ being nonnegative, due to a parabolic maximum principle and the divergence structure of the Keller-Segel equation (see Proposition \ref{lem:mass} for the proof of this fact). Thus, we will omit the time dependence of the mean and write $\rho_m$. 

We also consider the following decomposition to any function $\rho \in L^2(\Omega)$ that would play a crucial role in our analysis of the static problem: we write $\rho = \brho + \trho$, where
 $$
        \brho(x_2) = \frac{1}{2\pi}\int_{\T}\rho(x_1,x_2)dx_1,\; \trho = \rho -\brho.
$$
Note that $\brho$ is exactly the projection onto the zeroth mode corresponding to direction $x_1$, and $\trho$ the orthogonal complement. We remark that $\brho - \rho_m$ and $\trho$ are orthogonal in $L^2(\Omega)$:
$$
\int_\Omega (\brho(x_2) - \rho_m)\trho(x_1, x_2)dx_1dx_2 = 0.
$$
One can connect the 1D and 2D $L^2$ norm for $\brho - \rho_m$ by a simple relation $\|\brho - \rho_m\|_{L^2(\Omega)}^2 = 2\pi\|\brho - \rho_m\|_{L^2([0,\pi])}^2$. In the rest of the paper, we will only use the 1D $L^2$ norm for $\brho$.
We always refer to the 1D $L^2$ norm when we write $\|\brho - \rho_m\|_{L^2}$.

We will denote a universal constant in the upper bounds by uppercase letters, and that in lower bounds by lowercase letters. Constants without subscripts, i.e. $c, C, \calC$, are subject to change from line to line. Constants with subscripts, i.e. $c_i, C_i, \calC_i$, are fixed once they are chosen. All such constants mentioned above only depend on domain $\Omega$ by default. We will also use the notation $C(X)$ to denote a constant $C$ depending on quantity $X$. For two positive quantities $X$, $Y$, we write $X \lesssim Y$ to mean that $X \le CY$ for some universal constant $C$ that might only depends on domain $\Omega$. $X \gtrsim Y$ is similarly defined. \red{We will use $I_i$, $i \in \N$, to denote terms that will be estimated; this notation will be local to each lemma and will be reused in the proofs of other lemmas.} Lastly, we also use summation convention. That is, the repeated indices are summed over.

\subsection{Steady Stokes Equation with Lions Boundary Condition}
\label{subsect:lions}
In this subsection, we introduce suitable functional-analytic settings for our study of Stokes equation equipped with Lions boundary condition. We shall first study the following static Stokes equation and investigate its well-posedness:
 \begin{equation}
    \label{eq:statstokes}
    \begin{cases}
    -\Delta u + \nabla p = f,& \text{ in }\Omega\\
    \divv u = 0,& \text{ in }\Omega\\
    u_2|_{\p\Omega} = 0,\; \p_2u_1|_{\p\Omega} = 0.
    \end{cases}
    \end{equation}
In particular, we will derive suitable elliptic estimates similar to those satisfied by the classical Stokes operator that corresponds to no-slip boundary condition for velocity (see \cite{constantin2020navier}).

We introduce the following function spaces following the spirit of \cite{kelliher2006navier,clopeau1998vanishing}:
$$
H = \{v \in L^2(\Omega) \times L^2(\Omega)\;:\; \divv v = 0,\; \int_\Omega v_1(x)dx = 0\},
$$
$$
V = \{v \in H^1(\Omega) \times H^1(\Omega) : \divv v = 0,\; v_2|_{\p\Omega} = 0,\; \int_\Omega v_1(x)dx = 0\},
$$
$$
W = \{v \in V \cap (H^2(\Omega))^2 : \p_2 v_1|_{\p\Omega} = 0\},
$$
where we equip $H$ with $L^2$ topology, $V$ with topology induced by the inner product $(u,v)_V = \int_\Omega \p_i u_j \p_i v_j dx$, and $W$ with $H^2$ topology. For $u \in V$, the following Poincar\'e inequality holds:
\begin{prop}
\label{prop:Poincare}
    Let $u \in V$. Then the following estimate holds:
    \red{
    \begin{equation}
    \label{est:Poincare}
    \|u\|_{L^2} \le C_P \|\nabla u\|_{L^2},
    \end{equation}
    where $C_P$ is a positive constant that only depends on the domain $\Omega$.}
\end{prop}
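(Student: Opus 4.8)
The plan is to prove the Poincaré inequality for $u \in V$ directly, exploiting the two key structural features of the space $V$: the mean-zero condition $\int_\Omega u_1\,dx = 0$ on the first component, and the no-flux boundary condition $u_2|_{\p\Omega} = 0$ on the second component. I would treat the two components separately. For $u_2$, the situation is the classical one: since $u_2$ vanishes on the full top and bottom boundary $\T \times \{0,\pi\}$, the standard one-dimensional Poincaré inequality applied along vertical segments gives $\|u_2\|_{L^2(\Omega)} \le C \|\p_2 u_2\|_{L^2(\Omega)} \le C\|\nabla u_2\|_{L^2(\Omega)}$, with the constant depending only on the channel width $\pi$.

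For $u_1$ the boundary condition is missing, so I would instead use the mean-zero condition together with the divergence-free constraint. First I would decompose $u_1 = \overline{u_1} + \widetilde{u_1}$ into its $x_1$-average $\overline{u_1}(x_2) = \frac{1}{2\pi}\int_\T u_1\,dx_1$ and the remainder $\widetilde{u_1}$. The nonzero-mode part $\widetilde{u_1}$ has zero average in $x_1$ on every horizontal slice, so the periodic Poincaré inequality in $x_1$ yields $\|\widetilde{u_1}\|_{L^2(\Omega)} \le C\|\p_1 \widetilde{u_1}\|_{L^2(\Omega)} = C\|\p_1 u_1\|_{L^2(\Omega)}$. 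For the zeroth mode $\overline{u_1}(x_2)$, the incompressibility $\p_1 u_1 + \p_2 u_2 = 0$ averaged in $x_1$ gives $\p_2 \overline{u_2} = 0$; combined with $u_2|_{\p\Omega}=0$ this forces $\overline{u_2}\equiv 0$, which unfortunately does not directly control $\overline{u_1}$. Instead I would use the global mean-zero condition: $\int_\Omega u_1\,dx = 2\pi \int_0^\pi \overline{u_1}(x_2)\,dx_2 = 0$, so $\overline{u_1}$ is a function on $[0,\pi]$ with zero average, and the one-dimensional Poincaré–Wirtinger inequality gives $\|\overline{u_1}\|_{L^2} \le C \|\p_2 \overline{u_1}\|_{L^2} = C\|\p_2(\overline{u_1})\|_{L^2} \le C\|\p_2 u_1\|_{L^2(\Omega)}$ after converting between the 1D and 2D norms. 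Adding the three estimates and using orthogonality of the modes gives \eqref{est:Poincare} with $C_P$ depending only on $\Omega$.

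The main obstacle is the treatment of the zeroth mode $\overline{u_1}$: unlike in the whole-space or no-slip setting, neither the boundary condition on $u_1$ nor the divergence-free constraint alone controls it, and one genuinely needs the normalization $\int_\Omega u_1\,dx = 0$ built into the definition of $V$. Once one recognizes that this is exactly the condition that kills the one remaining kernel direction (the constant vector field $(1,0)$, which is divergence free and satisfies $u_2|_{\p\Omega}=0$), the argument is routine. An alternative, slightly slicker route would be a compactness/contradiction argument: if the inequality failed there would be a sequence $u^{(k)} \in V$ with $\|u^{(k)}\|_{L^2}=1$ and $\|\nabla u^{(k)}\|_{L^2}\to 0$; by Rellich $u^{(k)}$ converges in $L^2$ to some $u$ with $\nabla u = 0$, hence $u$ is constant; the constraints $u_2|_{\p\Omega}=0$ and $\int_\Omega u_1 = 0$ then force $u = 0$, contradicting $\|u\|_{L^2}=1$. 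I would present the explicit computation since it also yields an honest constant, but the contradiction argument is worth mentioning as a sanity check that the hypotheses in the definition of $V$ are exactly what is needed.
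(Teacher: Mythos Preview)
Your proposal is correct and follows essentially the same approach as the paper: treat the two components separately, using the boundary condition $u_2|_{\p\Omega}=0$ for the second component and the mean-zero condition $\int_\Omega u_1\,dx=0$ for the first. The paper's proof is more terse---it simply invokes the Poincar\'e(--Wirtinger) inequality for each component directly---whereas you spell out the mode decomposition for $u_1$ explicitly; your extra step is a valid proof of that Poincar\'e--Wirtinger inequality on $\T\times[0,\pi]$ rather than a genuinely different route. Note also that the divergence-free constraint plays no role in either argument (as you yourself observe), so you could streamline by dropping that detour.
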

Moreover, we have the following well-posedness and regularity result concerning \eqref{eq:statstokes}:
\begin{prop}
    \label{prop:statstokes}
    Assuming $f \in H$, then \eqref{eq:statstokes} admits a unique solution $u \in W$ with estimate
    \begin{equation}
        \label{est:elliptic1}
        \|u\|_2 \le C\|f\|_{L^2}.
    \end{equation}
    In fact, $u$ assumes the following explicit formula:
    \begin{equation}
    \label{eq:bsorigin}
    u = \nabla^\perp(-\Delta_D)^{-2}(\p_2f_1 - \p_1f_2),
    \end{equation}
    where $\nabla^\perp := (-\p_2,\p_1)$. In addition, if $f \in H^s \cap H$, $s \ge 1$, we have the following improved regularity estimate:
    \begin{equation}
        \label{est:elliptic2}
        \|u\|_{s+2} \le C\|f\|_{s}.
    \end{equation}
\end{prop}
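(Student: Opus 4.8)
The plan is to reduce the steady Stokes problem \eqref{eq:statstokes} to a scalar elliptic problem for the vorticity $\omega = \nabla^\perp \cdot u = \partial_1 u_2 - \partial_2 u_1$, exploiting the fact that in two dimensions with divergence-free $u$ and the Lions boundary conditions $u_2|_{\partial\Omega} = 0$, $\partial_2 u_1|_{\partial\Omega} = 0$, the vorticity itself satisfies a homogeneous Dirichlet condition. First I would take the curl (i.e., apply $\nabla^\perp\cdot$) to the momentum equation $-\Delta u + \nabla p = f$; since $\nabla^\perp \cdot \nabla p = 0$, this yields $-\Delta \omega = \nabla^\perp \cdot f = \partial_2 f_1 - \partial_1 f_2$ in $\Omega$. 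Next I would check the boundary condition for $\omega$: on $\partial\Omega = \T\times\{0,\pi\}$, the condition $u_2|_{\partial\Omega} = 0$ gives $\partial_1 u_2|_{\partial\Omega} = 0$ (tangential derivative of a function vanishing on the boundary), and combined with $\partial_2 u_1|_{\partial\Omega} = 0$ we get $\omega|_{\partial\Omega} = 0$. Hence $\omega = (-\Delta_D)^{-1}(\partial_2 f_1 - \partial_1 f_2)$, which is well-defined since $f \in H \subset L^2$ guarantees the right-hand side lies in $H^{-1}$.

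Having $\omega$, the next step is to recover $u$ from it. Since $\divv u = 0$ and $u_2|_{\partial\Omega} = 0$, and noting $u \in H$ forces $\int_\Omega u_1\,dx = 0$ so the full velocity is uniquely determined, I would introduce the stream function $\psi$ with $u = \nabla^\perp \psi$. Then $\omega = \nabla^\perp\cdot\nabla^\perp\psi = -\Delta\psi$, and the condition $u_2 = \partial_1\psi = 0$ on $\partial\Omega$ means $\psi$ is constant on each connected component of $\partial\Omega$; adjusting constants (and using the zero-mean normalization for $u_1 = -\partial_2\psi$, which pins down the constant difference between the two boundary components) one arranges $\psi|_{\partial\Omega} = 0$, so $\psi = (-\Delta_D)^{-1}\omega = (-\Delta_D)^{-2}(\partial_2 f_1 - \partial_1 f_2)$. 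This gives the explicit formula \eqref{eq:bsorigin}, $u = \nabla^\perp(-\Delta_D)^{-2}(\partial_2 f_1 - \partial_1 f_2)$. Conversely one should verify this formula does solve \eqref{eq:statstokes}: it is divergence-free, satisfies the boundary conditions by the above, and $-\Delta u = \nabla^\perp(-\Delta_D)^{-1}(\partial_2 f_1 - \partial_1 f_2) = \nabla^\perp\omega$; a short computation shows $f - \nabla^\perp\omega$ is curl-free and (after subtracting its mean appropriately) a gradient, furnishing the pressure $p$. Uniqueness follows either from the explicit formula or from a standard energy estimate: if $f = 0$ then testing against $u$ gives $\|\nabla u\|_{L^2} = 0$, hence $u = 0$ by the Poincar\'e inequality of Proposition \ref{prop:Poincare}.

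For the regularity estimates, I would argue via the scalar representation: $\|u\|_{H^{s+1}} \lesssim \|\psi\|_{H^{s+2}} = \|(-\Delta_D)^{-1}\omega\|_{H^{s+2}} \lesssim \|\omega\|_{H^s} = \|(-\Delta_D)^{-1}(\partial_2 f_1 - \partial_1 f_2)\|_{H^s} \lesssim \|\partial_2 f_1 - \partial_1 f_2\|_{H^{s-2}} \lesssim \|f\|_{H^{s-1}}$ for $s \ge 1$, which is precisely \eqref{est:elliptic2}; one must be slightly careful at the bottom level because $f \in H \subset L^2$ only gives $\partial_2 f_1 - \partial_1 f_2 \in H^{-1}$, so the $s = 0$ (i.e. \eqref{est:elliptic1}) case uses $\omega \in H^1$ via the standard $H^1$ bound $(-\Delta_D)^{-1}: H^{-1}\to H^1_0$ together with elliptic regularity gaining one derivative for $\psi$, yielding $\|u\|_{H^2} \lesssim \|\psi\|_{H^3}$... — here I would instead directly use $\|u\|_{H^2} \lesssim \|\psi\|_{H^3} \lesssim \|\omega\|_{H^1} \lesssim \|\nabla^\perp\cdot f\|_{H^{-1}} \lesssim \|f\|_{L^2}$, valid since on the smooth domain $\Omega$ the Dirichlet Laplacian has full elliptic regularity (the corners-free product structure $\T\times[0,\pi]$ causes no loss; one may also see this by Fourier series in $x_1$ and explicit sine/cosine expansions in $x_2$). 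The main obstacle, and the step deserving the most care, is the boundary-condition bookkeeping when passing to the stream function: verifying that the Lions conditions translate exactly into $\omega|_{\partial\Omega} = 0$ and $\psi|_{\partial\Omega} = 0$, and that the zero-horizontal-mean constraint in the definition of $H$ and $V$ correctly removes the remaining constant ambiguity in $\psi$ so that $u$ is genuinely unique. Everything else is a routine consequence of elliptic regularity for the Dirichlet Laplacian on $\Omega$.
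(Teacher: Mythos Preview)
Your approach is essentially the same as the paper's: reduce to the coupled Dirichlet problems for the vorticity $\omega$ and the stream function $\psi$, solve them by standard elliptic theory, and read off the formula and the $H^s$ estimates. (Note two harmless sign slips that cancel: with $\nabla^\perp = (-\partial_2,\partial_1)$ one has $\nabla^\perp\cdot f = \partial_1 f_2 - \partial_2 f_1$ and $\nabla^\perp\cdot\nabla^\perp\psi = \Delta\psi$, not $-\Delta\psi$.)
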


We delay the proofs of the Propositions above to the appendix.

\section{Local Well-Posedness and Regularity Criteria}\label{sec:lwp}
In this section, we show the local well-posedness for regular solutions to problems \eqref{eq:ksgen2} and \eqref{eq:ksstatstokes}, and state $L^2$ regularity criteria for both problems. {We start with giving rigorous definitions of a strong solution and a regular solution to both the full system \eqref{eq:ksgen2} and the static problem \eqref{eq:ksstatstokes}.}

{
\begin{defn}
\begin{enumerate}
    \item Given initial data $\rho_0 \in H^1$, $u_0 \in V$, we say that the pair $(\rho(t,x), u(t,x))$ is a strong solution to \eqref{eq:ksgen2} on $[0,T]$, $T > 0$, if 
    \begin{align*}
    \rho \in C^0([0,T]; H^1) \cap L^2((0,T); H^2),&\; u \in C^0([0,T]; V) \cap L^2((0,T); W),\\
    \p_t \rho \in L^2([0,T];L^2),&\; \p_t u \in L^2([0,T]; H),
    \end{align*}
    and $(\rho, u)$ satisfies \eqref{eq:ksgen2} in the sense of distribution. Moreover, a solution is regular if it is strong and additionally
$$
\rho \in C^\infty((0,T] \times \Omega),\quad u \in C^\infty((0,T] \times \Omega).
$$
    \item Given initial datum $\rho_0 \in H^1$, we say that $\rho(t,x)$is a strong solution to \eqref{eq:ksstatstokes} on $[0,T]$, $T > 0$, if 
    \begin{align*}
    \rho \in C^0([0,T]; H^1) \cap L^2((0,T); H^2),\quad \p_t \rho \in L^2([0,T];L^2),
    \end{align*}
    and $\rho$ is said to be regular if it is strong and $\rho \in C^\infty((0,T] \times \Omega).$
\end{enumerate}
\end{defn}
}
With the definition above, we state the following local well-posedness result.

{
\begin{thm} \label{thm:lwpstokes}
Given initial datum $\rho_0 \in H^1$ (resp. $\rho_0 \in H^1, u_0 \in V$) such that $\rho_0 \ge 0$, there exists $T_{loc} = T_{loc}(\rho_0) > 0$ (resp. $\calT_{loc}= \calT_{loc}(\rho_0) > 0$) so that there exists a unique strong solution $\rho$ (resp. $(\rho, u)$) to \eqref{eq:ksstatstokes} (resp. \eqref{eq:ksgen2}) on $[0,T_{loc})$ (resp. $[0,\calT_{loc})$). Moreover, the strong solutions are regular.
\end{thm}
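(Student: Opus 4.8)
The plan is to establish local well-posedness by a standard fixed-point/Galerkin scheme, followed by a bootstrap for smoothing. I will first treat the static problem \eqref{eq:ksstatstokes}, since it is self-contained once we substitute the velocity law \eqref{eq:bs}, turning it into a single nonlocal parabolic equation for $\rho$. I would set $\trho := \rho - \rho_m$, so the equation becomes $\p_t \trho - \Delta \trho + \divv\bigl(\rho\, \nabla(-\Delta_N)^{-1}\trho\bigr) + \divv(\rho\, u) = 0$ with $u = -g\nabla^\perp(-\Delta_D)^{-2}\p_1\rho$. The right-hand side, viewed as a forcing, is a quadratic nonlinearity that maps $H^1$ into (roughly) $L^2$, since by Proposition \ref{prop:statstokes} and elliptic regularity $u$ gains two derivatives over $\rho$, and the chemotactic potential $\nabla(-\Delta_N)^{-1}\trho$ gains one; the product rule together with Gagliardo-Nirenberg and the algebra/multiplication properties of $H^1(\Omega)$ in two dimensions control the nonlinear terms. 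Thus I would run a Picard iteration in the space $C^0([0,T];H^1)$ intersected with $L^2((0,T);H^2)$ — equivalently, write the solution via Duhamel against the Neumann heat semigroup $e^{t\Delta_N}$ and show the map is a contraction on a small ball for $T = T(\rho_0)$ small. Uniqueness follows from the same contraction estimate, or a direct energy/Gr\"onwall argument on the difference of two solutions.

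For the full system \eqref{eq:ksgen2}, the scheme is analogous but now the velocity is dynamic: I would work with the pair $(\rho, u)$ in $\bigl(C^0([0,T];H^1)\cap L^2H^2\bigr) \times \bigl(C^0([0,T];V)\cap L^2 W\bigr)$. The natural approach is a Galerkin approximation using the eigenfunctions of the Neumann Laplacian for $\rho$ and of the Stokes operator with Lions boundary condition (whose spectral theory is set up in Section \ref{subsect:lions}, Propositions \ref{prop:Poincare}-\ref{prop:statstokes}) for $u$. One derives a priori estimates: testing the $\rho$-equation against $\rho$ and $-\Delta\rho$, and the $u$-equation against $u$ and the Stokes operator applied to $u$, then using the elliptic estimates \eqref{est:elliptic1}-\eqref{est:elliptic2}, Ladyzhenskaya's inequality, and the key 2D fact that $\|f\|_{L^4}^2 \lesssim \|f\|_{L^2}\|f\|_{H^1}$ to absorb all nonlinear terms — the advection $u\cdot\nabla\rho$, the chemotactic $\divv(\rho\nabla(-\Delta_N)^{-1}(\rho-\rho_m))$, the fluid advection $u\cdot\nabla u$, and the buoyancy forcing $Bg\rho(0,1)^T$ — into a differential inequality of the form $\frac{d}{dt}Y \le C(B,g)(1+Y)^p$ closing on a short time interval depending only on the data (and on $B,g$). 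Then I pass to the limit in the Galerkin index using Aubin-Lions compactness (the time-derivative bounds $\p_t\rho \in L^2 L^2$, $\p_t u \in L^2 H$ give equicontinuity), obtaining a strong solution; uniqueness again comes from a Gr\"onwall estimate on the difference, using that two solutions share the bounded norms just established.

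To upgrade strong solutions to regular ($C^\infty$ for $t>0$) solutions, I would run a parabolic bootstrap: having $\rho \in L^2H^2$ and $u \in L^2W$, one reinserts these into the equations to see that the forcing terms lie in better spaces on any subinterval $[\varepsilon, T]$, then uses maximal regularity for the heat semigroup and for the (time-dependent) Stokes system with Lions boundary conditions to gain $H^3$, then $H^4$, and so on, iterating; the boundary conditions are compatible at every stage because $\p_2\rho|_{\p\Omega}=0$, $u_2|_{\p\Omega}=0$, $\p_2 u_1|_{\p\Omega}=0$ are preserved by the (elliptic) operators involved. Combined with the corresponding gain in time-regularity by differentiating the equations in $t$, this yields $\rho, u \in C^\infty((0,T]\times\Omega)$. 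Finally I would remark that the maximal time of existence $T_{loc}$ (resp. $\calT_{loc}$) depends only on $\|\rho_0\|_{H^1}$ (resp. $\|\rho_0\|_{H^1}, \|u_0\|_V$) since the a priori bound does — the dependence on $B,g$ is harmless here because those are fixed parameters, though the continuation time degenerates as $B,g\to\infty$, which is exactly why the global result needs the separate mechanism of Propositions \ref{prop:barrier}-\ref{prop:bootstrap}.

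I expect the main obstacle to be the treatment of the Lions boundary condition throughout the bootstrap: unlike the no-slip case, one must consistently use the reformulation $u_2|_{\p\Omega}=0,\ \omega|_{\p\Omega}=0$ (equivalently $\p_2 u_1|_{\p\Omega}=0$) and verify that the Stokes operator with these boundary conditions enjoys the same elliptic and maximal-regularity estimates as the classical one; fortunately Proposition \ref{prop:statstokes} provides the steady-state version and the explicit formula \eqref{eq:bsorigin}, so the remaining work is to package the time-dependent analogue and confirm compatibility of boundary conditions at each order of the bootstrap. A secondary technical point is keeping careful track of the coupling in the a priori estimate so that the buoyancy term $Bg\rho(0,1)^T$, which is large, does not spoil the short-time closure — but since we only claim a local statement with constants allowed to depend on $B,g$, a crude bound suffices here.
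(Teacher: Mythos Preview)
Your proposal is correct and matches the paper's approach in all essential respects: Galerkin construction (the paper uses Galerkin for both problems rather than Picard for the static one, but this is a cosmetic choice), $L^2$-based a priori estimates obtained by testing against $\rho-\rho_m$, $-\Delta\rho$, $u$, and $-\Delta u$ and closing with Gagliardo--Nirenberg/Ladyzhenskaya, uniqueness via Gr\"onwall on the difference, and higher regularity by parabolic bootstrap. The paper in fact defers the details to \cite{hu2023stokes} and \cite{boyer2012mathematical}, noting only that the extra advection term $u\cdot\nabla u$ is handled by the standard 2D Navier--Stokes estimates and that for the static problem one substitutes Proposition~\ref{prop:statstokes} wherever $\|u\|_2$ appears.
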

}
We conclude this section by stating the following regularity criteria for both \eqref{eq:ksgen2} and \eqref{eq:ksstatstokes}.

\begin{thm}
    \label{thm:criteria}
    If the maximal lifespan $T$ of the strong solution to \eqref{eq:ksgen2} (resp. \eqref{eq:ksstatstokes}) is finite, then necessarily
    \begin{equation}
        \label{eq:criteria}
        \lim_{t \to T}\int_0^t\|\rho(t) - \rho_m\|_{L^2}^2 dt = \infty.
    \end{equation}
\end{thm}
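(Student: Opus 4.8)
The plan is to prove the contrapositive: if $\int_0^T \|\rho(t) - \rho_m\|_{L^2}^2\,dt < \infty$ for finite $T$, then the strong solution extends past $T$. By the local well-posedness of Theorem \ref{thm:lwpstokes}, it suffices to produce a uniform bound on $\|\rho(t)\|_{H^1}$ (and, for the full system, on $\|u(t)\|_V$) on $[0,T)$, since then $(\rho(T^-), u(T^-))$ lies in the data class and the local theory restarts. I will focus on the full system \eqref{eq:ksgen2}; the static case is the same argument with $u$ given directly by \eqref{eq:bs} and hence controlled by $\|\rho-\rho_m\|$ through the elliptic estimate \eqref{est:elliptic2}, which is strictly easier.

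First I would bootstrap from $L^2$ to $L^2$-in-$H^1$. Testing \eqref{eq:ksstokesden2} against $\rho - \rho_m$ and using that $u$ is divergence-free with $u_2|_{\partial\Omega}=0$ (so the transport term drops out), one gets
\begin{equation}
\frac{1}{2}\frac{d}{dt}\|\rho-\rho_m\|_{L^2}^2 + \|\nabla\rho\|_{L^2}^2 = -\int_\Omega \rho\,\nabla(-\Delta_N)^{-1}(\rho-\rho_m)\cdot\nabla\rho\,dx.
\end{equation}
The right-hand side is handled by the Gagliardo--Nirenberg inequality $\|f\|_{L^4}^2 \lesssim \|f\|_{L^2}\|f\|_{H^1}$ together with elliptic regularity for $(-\Delta_N)^{-1}$, yielding a bound of the form $\varepsilon\|\nabla\rho\|_{L^2}^2 + C(1+\|\rho-\rho_m\|_{L^2}^2)^{p}\|\rho-\rho_m\|_{L^2}^2$ after absorbing; an application of Grönwall using the assumed finiteness of $\int_0^T\|\rho-\rho_m\|_{L^2}^2$ then bounds $\sup_{[0,T)}\|\rho-\rho_m\|_{L^2}^2$ and $\int_0^T\|\nabla\rho\|_{L^2}^2\,dt$. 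With $\rho \in L^\infty_t L^2 \cap L^2_t H^1$ in hand, the Stokes estimate \eqref{est:elliptic1} applied to \eqref{eq:ksstokesvel2} (treating $\partial_t u + u\cdot\nabla u$ via the standard 2D Navier--Stokes energy estimate with Lions boundary conditions, for which $\int \omega = 0$ on $\partial\Omega$ makes the viscous term coercive) gives $u \in L^\infty_t H \cap L^2_t V$ with a quantitative bound depending on the $\rho$-norms and $\|u_0\|_1$.

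Next I would upgrade to $H^1$ for $\rho$: differentiate the energy identity, i.e. test \eqref{eq:ksstokesden2} against $-\Delta\rho$ (respecting $\partial_2\rho|_{\partial\Omega}=0$ so boundary terms vanish), to obtain $\frac{d}{dt}\|\nabla\rho\|_{L^2}^2 + \|\Delta\rho\|_{L^2}^2 \lesssim (\text{lower order})$. The dangerous terms are $\int u\cdot\nabla\rho\,\Delta\rho$ and the chemotactic contribution $\int \divv(\rho\nabla(-\Delta_N)^{-1}(\rho-\rho_m))\Delta\rho$; both are controlled by Gagliardo--Nirenberg/Ladyzhenskaya in 2D, interpolating $\|\nabla\rho\|_{L^4}$, $\|u\|_{L^4}$, etc., against $\|\Delta\rho\|_{L^2}$ and $\|\nabla u\|_{L^2}$, with all borderline factors absorbed into the dissipation and the rest fed into Grönwall using the already-established $\int_0^T \|\nabla\rho\|_{L^2}^2\,dt < \infty$ and $\int_0^T\|\nabla u\|_{L^2}^2\,dt<\infty$. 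A parallel $H^1$ estimate for $u$ (testing the velocity equation against $-\Delta u$, using the Lions boundary conditions to integrate by parts cleanly and the forcing $Bg\rho$ bounded in $L^2$) closes the pair of differential inequalities. This yields $\sup_{[0,T)}(\|\rho\|_{H^1} + \|u\|_V) < \infty$.

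The main obstacle I anticipate is the boundary analysis for the fluid part: one must verify that the Lions boundary conditions $u_2|_{\partial\Omega}=0$, $\partial_2 u_1|_{\partial\Omega}=0$ genuinely kill all boundary terms arising in the $-\Delta u$ test (the pressure term, the viscous term, and the nonlinear term), so that the Stokes operator of Section \ref{subsect:lions} behaves like the no-slip one and estimates \eqref{est:elliptic1}--\eqref{est:elliptic2} can be invoked; this is where the functional framework of Proposition \ref{prop:statstokes} does the work. A secondary subtlety is keeping the constants' dependence on $B$ and $g$ harmless — but since Theorem \ref{thm:criteria} is a qualitative continuation criterion at \emph{fixed} parameters, any finite dependence is acceptable and no sharpness in $B,g$ is needed here. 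Everything else is routine 2D parabolic/Navier--Stokes energy bookkeeping.
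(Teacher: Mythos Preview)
Your proposal is correct and follows essentially the same route as the paper: the paper defers to \cite[Theorem 1.2]{hu2023stokes}, whose proof (also sketched in a commented-out appendix here) is precisely the contrapositive $L^2\to H^1$ bootstrap you describe---test \eqref{eq:ksstokesden2} by $\rho-\rho_m$ to get $\frac{d}{dt}\|\rho-\rho_m\|_{L^2}^2 \le (C\|\rho-\rho_m\|_{L^2}^2+2\rho_m)\|\rho-\rho_m\|_{L^2}^2$, Gr\"onwall with the hypothesis to bound $\rho\in L^\infty_tL^2\cap L^2_tH^1$, then test by $-\Delta\rho$ (and \eqref{eq:ksstokesvel2} by $u$ and $-\Delta u$) using Ladyzhenskaya/Gagliardo--Nirenberg, with the Lions boundary identities of Lemma~\ref{lem:ibp} killing the boundary terms you flagged. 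Your only inaccuracies are cosmetic: the sign in your displayed right-hand side should be $+$, and for the velocity you want direct energy estimates on \eqref{eq:ksstokesvel2} rather than the static Stokes estimate \eqref{est:elliptic1}.
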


We briefly comment on the proof of Theorem \ref{thm:lwpstokes} and Theorem \ref{thm:criteria}, as the details follow almost verbatim to Section 2 of \cite{hu2023stokes}. To prove the existence part of Theorem \ref{thm:lwpstokes}, we use standard Galerkin's method to construct a solution to both \eqref{eq:ksgen2} and \eqref{eq:ksstatstokes} since $\Omega$ is a compact domain. Hence, it suffices to show $L^2$--based \textit{a priori} estimates as in \cite{hu2023stokes}. As for the \textit{a priori} estimates of \eqref{eq:ksgen2}, all estimates in Section 2 of \cite{hu2023stokes} carry over. The only extra estimate one has to make is on advection term $u\cdot\nabla u$, as we study the Navier-Stokes equation in our case. The treatment of such terms, however, are
standard in the study of well-posedness of 2D Navier-Stokes equations. We refer interested readers to the proof of \cite[Theorem V.2.10]{boyer2012mathematical} for related details. As for the \textit{a priori} estimates of \eqref{eq:ksstatstokes}, we repeat the same proof as we do for \eqref{eq:ksgen2}, with the only change of applying Proposition \ref{prop:statstokes} whenever we see $\|u\|_2$. Finally, uniqueness follows from the \textit{a priori} estimates that we have derived above. Higher regularity follows from standard parabolic smoothing. This ends the proof of Theorem \ref{thm:lwpstokes}.

The proof of Theorem \ref{thm:criteria} is the same as that of \cite[Theorem 1.2]{hu2023stokes} by studying the $L^2$ estimate of the $\rho$ equation. The reason why the proofs of \cite[Theorem 1.2]{hu2023stokes} can be carried over is that \cite[Theorem 1.2]{hu2023stokes} only use $\divv u = 0$, $\rho$ nonnegative, and $\|\rho\|_{L^1}$ non-increasing. These properties hold in both \eqref{eq:ksgen2} and \eqref{eq:ksstatstokes} (see Proposition \ref{lem:mass}).

\section{Analysis of the Static Problem}\label{sec:stat}
In this section, we perform a detailed analysis of \eqref{eq:ksstatstokes}, which serves as a starting point prior to the study of the full problem \eqref{eq:ksgen2}. In Section \ref{subsect:aprioristat}, we establish a series of \textit{a priori} estimates concerning both cell density $\rho$ and velocity field $u$ of the static problem. We will further show a Nash-type inequality that elucidates an anisotropic mixing effect in the regime of large $g$. Finally, we prove Proposition \ref{prop:barrier} in Section \ref{subsect:statgwp}.

\subsection{Key \textit{A Priori} Estimates}
\label{subsect:aprioristat}
As a first step, we establish several key energy estimates concerning the system \eqref{eq:ksstatstokes}. We first prove several \textit{a priori} estimates for the Keller-Segel equation advected by a general incompressible flow. Then we focus on showing estimates corresponding to the fluid equation \eqref{eq:ksstatvel2}, which turn out to be crucial in establishing the norm-stabilizing property and global well-posedness of \eqref{eq:ksstatstokes}.

\subsubsection{Estimates for a General Keller-Segel Equation with Advection}
In this section, we derive appropriate estimates for the Keller-Segel equation advected by a general incompressible flow $u$, which may or may not depend on the cell density $\rho$ itself:
\begin{equation}
    \label{eq:convectedks}
    \begin{split}
    \p_t \rho + u\cdot \nabla \rho -\Delta \rho + \divv(\rho\nabla(-\Delta_N)^{-1}(\rho- \rho_m)) = 0,\\
    \rho(0,x) = \rho_0(x) \ge 0,\; \p_2\rho|_{\p\Omega} = 0.
    \end{split}
\end{equation}
Here, $u$ is a smooth, divergence-free vector field that satisfies no-flux boundary condition (i.e. $u_2|_{\p\Omega} = 0$). We allow such generality in \eqref{eq:convectedks} so that all results in this subsection hold for both the static problem \eqref{eq:ksstatstokes} and the full problem \eqref{eq:ksgen2}. This is because an incompressible, viscous flow satisfying Lions boundary condition necessarily satisfies all requirements prescribed in \eqref{eq:convectedks}.

Our first result reveals that \eqref{eq:convectedks} conserves mass.
\begin{prop}
    \label{lem:mass}
    Assume that $\rho(t,x)$ is a regular solution to \eqref{eq:convectedks} on $[0,T]$ \red{with $u$ being $C^1$ and divergence-free.} Then $\rho(t,x) \ge 0$ and $\|\rho(t,\cdot)\|_{L^1} = \|\rho_0\|_{L^1}$ for all $t \in [0,T]$.
\end{prop}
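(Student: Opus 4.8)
The plan is to establish the two assertions — nonnegativity and $L^1$ conservation — in that order, since the mass identity relies on knowing $\rho \ge 0$ so that $\|\rho\|_{L^1} = \int_\Omega \rho\,dx$.

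\textbf{Step 1: Nonnegativity via the maximum principle.} First I would rewrite the density equation in a form amenable to a parabolic comparison argument. Writing $c := (-\Delta_N)^{-1}(\rho - \rho_m)$, the chemotaxis term expands as $\divv(\rho \nabla c) = \nabla \rho \cdot \nabla c + \rho \Delta c = \nabla\rho\cdot\nabla c + \rho(\rho - \rho_m)$, using that $-\Delta_N c = \rho - \rho_m$. Hence \eqref{eq:convectedks} becomes
\begin{equation*}
    \p_t \rho - \Delta \rho + (u + \nabla c)\cdot\nabla\rho + (\rho - \rho_m)\rho = 0,
\end{equation*}
which is a linear parabolic equation for $\rho$ with a smooth drift field $b := u + \nabla c$ (smooth on $(0,T]$ by regularity of the solution, and $\nabla c \in H^1$ by elliptic regularity, with $c$ satisfying homogeneous Neumann conditions so $b$ is tangential on $\p\Omega$) and a bounded zeroth-order coefficient $\rho - \rho_m$. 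Since $\rho_0 \ge 0$, the classical maximum principle for parabolic equations on a bounded domain with the Neumann boundary condition $\p_2\rho|_{\p\Omega}=0$ gives $\rho(t,\cdot) \ge 0$ for all $t \in [0,T]$. A clean way to see this without invoking a strong form of the maximum principle is to multiply the equation by $\rho_- := \min(\rho,0)$ and integrate: the advection term vanishes because $b$ is divergence-free and tangential to $\p\Omega$ (so $\int_\Omega b\cdot\nabla\rho\, \rho_-\,dx = \tfrac12\int_\Omega b\cdot\nabla(\rho_-^2)\,dx = 0$), the diffusion term yields $-\|\nabla\rho_-\|_{L^2}^2$, and the reaction term contributes $-\int_\Omega (\rho-\rho_m)\rho\rho_-\,dx = -\int_{\{\rho<0\}}(\rho - \rho_m)\rho^2\,dx \le \rho_m \|\rho_-\|_{L^2}^2$ (here we use $\rho_m \ge 0$, proved as part of the mass identity below, or simply bound by $|\rho_m|$). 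Thus $\tfrac{d}{dt}\|\rho_-\|_{L^2}^2 \le 2\rho_m\|\rho_-\|_{L^2}^2$, and Grönwall with $\|\rho_-(0)\|_{L^2} = 0$ forces $\rho_- \equiv 0$.

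\textbf{Step 2: Conservation of the $L^1$ norm.} Now that $\rho \ge 0$, integrating \eqref{eq:convectedks} over $\Omega$ directly gives mass conservation. The Laplacian term integrates to $\int_{\p\Omega}\p_2\rho\,dS = 0$ by the Neumann condition; the advection term is $\int_\Omega u\cdot\nabla\rho\,dx = -\int_\Omega (\divv u)\rho\,dx + \int_{\p\Omega} (u\cdot n)\rho\,dS = 0$ since $\divv u = 0$ and $u_2|_{\p\Omega} = 0$; and the chemotaxis term $\int_\Omega \divv(\rho\nabla c)\,dx = \int_{\p\Omega} \rho\, \p_n c\,dS = 0$ because $c = (-\Delta_N)^{-1}(\rho-\rho_m)$ satisfies the homogeneous Neumann condition $\p_2 c|_{\p\Omega} = 0$. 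Therefore $\tfrac{d}{dt}\int_\Omega \rho\,dx = 0$, and since $\rho\ge 0$ this says $\|\rho(t)\|_{L^1} = \int_\Omega\rho(t)\,dx = \int_\Omega\rho_0\,dx = \|\rho_0\|_{L^1}$. As a byproduct, $\rho_m = \tfrac{1}{|\Omega|}\|\rho_0\|_{L^1} \ge 0$ is constant in time, justifying the remark in the preliminaries and the use of $\rho_m \ge 0$ in Step 1 (one may run Step 1 and Step 2 together, or note that $\rho_m$ involves only the signed integral, which is conserved regardless of sign, and that $\int_\Omega\rho_0\,dx \ge 0$ trivially).

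\textbf{Main obstacle.} The only genuinely delicate point is the rigorous justification of the maximum principle in Step 1 at the level of regularity we have: the solution is only strong on $[0,T]$ and smooth on $(0,T]$, so one should either work on $[\epsilon, T]$ and pass $\epsilon \to 0$ using continuity of $t\mapsto\|\rho_-(t)\|_{L^2}$ in $H^1$ (hence in $L^2$), or carry out the $\rho_-$ energy estimate directly using the strong-solution regularity, checking that $\rho_- \in L^2((0,T);H^1)$ and that the chain rule $\p_t(\rho_-^2) = 2\rho_-\p_t\rho$ is valid for $H^1$-valued functions with $L^2$ time derivative (a standard Lipschitz-truncation argument). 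Everything else is routine integration by parts using $\divv u = 0$, the Neumann conditions on $\rho$ and $c$, and the no-flux condition $u_2|_{\p\Omega}=0$.
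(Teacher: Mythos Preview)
Your overall approach matches the paper's --- invoke the parabolic maximum principle for nonnegativity, then integrate the equation over $\Omega$ for mass conservation --- and your Step~2 is essentially identical to the paper's computation (the paper just writes one line; you spell out each boundary term). The paper does not give the $\rho_-$ energy argument; it simply cites the maximum principle, so your Step~1 is more detailed than what appears there.

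There is, however, a sign slip in Step~1 that breaks the inequality you wrote. Since $-\Delta c = \rho - \rho_m$, you have $\rho\,\Delta c = -\rho(\rho - \rho_m)$, so the expanded equation reads
\[
\p_t\rho - \Delta\rho + (u + \nabla c)\cdot\nabla\rho + (\rho_m - \rho)\rho = 0,
\]
with zeroth-order coefficient $\rho_m - \rho$, not $\rho - \rho_m$. With your sign, the reaction contribution to $\tfrac{d}{dt}\|\rho_-\|_{L^2}^2$ contains the term $\int_{\{\rho<0\}}(-\rho^3)\,dx > 0$, which is not bounded by $\rho_m\|\rho_-\|_{L^2}^2$, so the stated estimate is false. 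With the corrected sign the argument actually becomes cleaner: on $\{\rho<0\}$ one has $(\rho_m - \rho)\rho^2 \ge 0$ (using $\rho_m \ge 0$, which follows from $\rho_0 \ge 0$ and conservation of the signed integral), so the reaction term has the good sign and $\tfrac{d}{dt}\|\rho_-\|_{L^2}^2 \le 0$ directly, without needing Gr\"onwall.
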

\begin{proof}
    First, $\rho(t,x) \ge 0$ for all $t \in [0,T]$ comes from the facts that $\rho_0 \ge 0$ and parabolic maximum principle. To prove the conservation of mass, we use nonnegativity of $\rho(t,x)$ to obtain that
    \begin{align*}
        \frac{d}{dt}\|\rho(t,\cdot)\|_{L^1} &= \frac{d}{dt}\int_\Omega \rho(t,x)dx = \int_\Omega (-u\cdot \nabla \rho + \Delta \rho - \divv(\rho\nabla(-\Delta_N)^{-1}(\rho- \rho_m)))(t,x) dx = 0,
    \end{align*}
    where we integrated by parts and used that $u$ is divergence-free, $u_2|_{\p\Omega} = 0$, and $\p_2 \rho|_{\p\Omega} = 0$.
\end{proof}

The next result, originally proved in \cite{hu2023suppression}, gives two variants of $L^2$ energy estimates of \eqref{eq:convectedks}, which play a major role in establishing the norm-stabilizing property of \eqref{eq:ksstatstokes}. {We point out that \cite[Proposition 4.2 and Corollary 4.3]{hu2023suppression} are proved where $u$ is governed by Darcy's law \eqref{darcy}. However, their proofs only used that $u$ is divergence-free and satisfies no-flux boundary condition. Hence, the proof of the result below follows verbatim from that of \cite[Proposition 4.2 and Corollary 4.3]{hu2023suppression}.}

\begin{prop}[Essentially Proposition 4.2, Corollary 4.3 of \cite{hu2023suppression}]
\label{lem:naive}
    Assume $\rho(t,x)$ to be a regular, nonnegative solution to \eqref{eq:convectedks} on $[t_0,t_0 + T]$ for some $t_0 \ge 0$ and $T > 0$. Then for any $t \in (t_0, t_0 + T)$, there exists a constant $C_0$ depending only on the domain $\Omega$ such that
    \begin{equation}
        \label{est:naive0}
        \frac{d}{dt}\|\rho - \rho_m\|_{L^2}^2 + \|\nabla \rho\|_{L^2}^2 \le C_0\|\rho - \rho_m\|_{L^2}^4 + 2\rho_m \|\rho - \rho_m\|_{L^2}^2.
    \end{equation}
    \red{Furthermore, if $\|\rho(t,\cdot) - \rho_m\|_{L^2}^2 \geq \rho_m/4$ for all $t \in [t_0, t_0 + T]$, then there exists a positive constant $C_1$ depending only on the domain $\Omega$ such that
    \begin{align}
    \label{est:naive}
        \frac{d}{dt}\|\rho - \rho_m\|_{L^2}^2 + \|\nabla \rho\|_{L^2}^2 &\le C_1\|\rho - \rho_m\|_{L^2}^4,
    \end{align}
    and
    \begin{align}
    \label{est:mainenergy}
    \frac{d}{dt}\|\rho - \rho_m\|_{L^2}^2 &\le -\frac{1}{C_1\rho_m^4}\|\brho - \rho_m\|_{L^2}^6+ C_1 \|\trho\|_{L^2}^4,
    \end{align}
    for any $t \in [t_0,t_0 + T]$.}
\end{prop}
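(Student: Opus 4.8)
The plan is to establish the three estimates in turn, starting from the standard $L^2$ energy identity for \eqref{eq:convectedks}. Testing the equation against $\rho - \rho_m$, the advection term drops out (since $u$ is divergence-free with $u_2|_{\p\Omega}=0$ and $\int_\Omega u\cdot\nabla\rho\,(\rho-\rho_m) = -\tfrac12\int_\Omega \divv u\,(\rho-\rho_m)^2 = 0$), and integrating by parts in the chemotactic term gives
\begin{equation*}
\frac{d}{dt}\|\rho-\rho_m\|_{L^2}^2 + 2\|\nabla\rho\|_{L^2}^2 = 2\int_\Omega \rho\,\nabla(-\Delta_N)^{-1}(\rho-\rho_m)\cdot\nabla\rho\,dx.
\end{equation*}
Writing $\rho = (\rho-\rho_m)+\rho_m$ in the first factor and $\nabla\rho = \nabla(\rho-\rho_m)$, one splits the right-hand side into a cubic term $\int (\rho-\rho_m)\nabla(-\Delta_N)^{-1}(\rho-\rho_m)\cdot\nabla(\rho-\rho_m)$ and a term $\rho_m\int \nabla(-\Delta_N)^{-1}(\rho-\rho_m)\cdot\nabla(\rho-\rho_m) = -\rho_m\|\rho-\rho_m\|_{L^2}^2$ (after integrating by parts and using $-\Delta_N(-\Delta_N)^{-1}(\rho-\rho_m) = \rho-\rho_m$). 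The cubic term is controlled by Gagliardo--Nirenberg/Sobolev embedding: $\|\rho-\rho_m\|_{L^4}\lesssim \|\rho-\rho_m\|_{L^2}^{1/2}\|\rho-\rho_m\|_{H^1}^{1/2}$ in 2D, together with elliptic regularity $\|\nabla(-\Delta_N)^{-1}(\rho-\rho_m)\|_{L^4}\lesssim\|\rho-\rho_m\|_{L^{4/3}}\lesssim\|\rho-\rho_m\|_{L^2}$, yielding a bound $\lesssim \|\rho-\rho_m\|_{L^2}^2\|\nabla\rho\|_{L^2} + \|\rho-\rho_m\|_{L^2}^3\|\nabla\rho\|_{L^2}^{1/2}\cdot(\dots)$; after Young's inequality to absorb a fraction of $\|\nabla\rho\|_{L^2}^2$ and using Poincaré $\|\rho-\rho_m\|_{L^2}\lesssim\|\nabla\rho\|_{L^2}$ to clean up lower-order pieces, one arrives at \eqref{est:naive0}. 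Since this is quoted essentially verbatim from \cite[Proposition 4.2]{hu2023suppression}, I would just reproduce the structure and cite that source.

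Next, \eqref{est:naive} is an immediate consequence of \eqref{est:naive0}: under the hypothesis $\|\rho(t)-\rho_m\|_{L^2}^2 \ge \rho_m/4$ on the whole interval, the linear term obeys $2\rho_m\|\rho-\rho_m\|_{L^2}^2 \le 8\|\rho-\rho_m\|_{L^2}^2\cdot\|\rho-\rho_m\|_{L^2}^2 = 8\|\rho-\rho_m\|_{L^2}^4$, which can be absorbed into the quartic term at the cost of enlarging the constant $C_0$ to some $C_1$. This is a one-line argument.

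The estimate \eqref{est:mainenergy} is the substantive one and is where the anisotropic structure enters. Here I would use the orthogonal splitting $\rho-\rho_m = (\brho-\rho_m)+\trho$ introduced in the preliminaries, where $\brho$ is the $x_1$-average. The key is that the diffusion term sees the full gradient, while the zeroth mode $\brho-\rho_m$ has $\|\partial_2(\brho-\rho_m)\|_{L^2}\gtrsim\|\brho-\rho_m\|_{L^2}$ by a 1D Poincaré inequality on $[0,\pi]$ (note $\partial_2\brho|_{\p\Omega}=0$ and the mean of $\brho-\rho_m$ vanishes), so $\|\nabla\rho\|_{L^2}^2 \ge c\|\brho-\rho_m\|_{L^2}^2 + \|\nabla\trho\|_{L^2}^2$. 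Then, tracking the right-hand side of the energy identity more carefully after the $\bar{}/\tilde{}$ decomposition, the cubic interaction terms that couple $\brho$ to itself are structurally small — one expects a gain because the velocity law \eqref{eq:bs} makes $u$ depend on $\partial_1\rho$, which kills purely $x_1$-independent contributions, and because $(-\Delta_N)^{-1}$ applied to $\brho-\rho_m$ is itself $x_1$-independent so the chemotactic drift of the zeroth mode against $\nabla(-\Delta_N)^{-1}(\brho-\rho_m)$ has a sign/cancellation structure. Carefully bounding every term, the $\|\brho-\rho_m\|_{L^2}$-only contributions can be dominated by $-c\|\brho-\rho_m\|_{L^2}^2\cdot\|\brho-\rho_m\|_{L^2}^{-?}$... — more precisely, one interpolates the production terms against the dissipation $c\|\brho-\rho_m\|_{L^2}^2$ to extract the coercive cubic-in-square term $-\tfrac{1}{C_1\rho_m^4}\|\brho-\rho_m\|_{L^2}^6$ (the power $6$ and the $\rho_m^{-4}$ weight come from a Young's inequality balancing $\|\brho-\rho_m\|_{L^2}^2$ against a production term of the form $\rho_m^2\|\brho-\rho_m\|_{L^2}^{4}\|\brho-\rho_m\|_{H^1}^{?}$), while all remaining terms are dominated by $C_1\|\trho\|_{L^2}^4$. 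I expect the main obstacle to be exactly this bookkeeping: isolating which pieces of the nonlinearity genuinely involve only the zeroth mode, verifying the cancellations, and getting the interpolation exponents to produce precisely the $\|\brho-\rho_m\|_{L^2}^6$ with the stated $\rho_m^{-4}$ dependence. Since \eqref{est:mainenergy} is asserted to follow verbatim from \cite[Corollary 4.3]{hu2023suppression} — whose velocity law \eqref{darcy} differs from \eqref{eq:bs} only in regularity, not in the algebraic structure relevant here — I would present the derivation by reducing it to that reference, emphasizing that the cited proof uses only $\divv u = 0$ and $u_2|_{\p\Omega}=0$, both of which hold here.
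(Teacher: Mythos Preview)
Your overall strategy---deriving \eqref{est:naive0} directly from the $L^2$ energy identity, obtaining \eqref{est:naive} by absorbing the linear term into the quartic under the hypothesis $\|\rho-\rho_m\|_{L^2}^2\ge\rho_m/4$, and ultimately deferring \eqref{est:mainenergy} to the cited reference---matches the paper, which does not re-prove the result but simply notes that the argument in \cite{hu2023suppression} uses only $\divv u=0$ and $u_2|_{\p\Omega}=0$.

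Two points deserve correction. First, in your sketch of \eqref{est:naive0} there is a sign error: integrating by parts (no boundary terms, since $(-\Delta_N)^{-1}$ has Neumann data),
\[
\int_\Omega\nabla(-\Delta_N)^{-1}(\rho-\rho_m)\cdot\nabla(\rho-\rho_m)\,dx=\int_\Omega(\rho-\rho_m)^2\,dx=+\|\rho-\rho_m\|_{L^2}^2,
\]
not $-\|\rho-\rho_m\|_{L^2}^2$; this is exactly why $+2\rho_m\|\rho-\rho_m\|_{L^2}^2$ sits on the right of \eqref{est:naive0}. A cleaner route is to write $\rho\nabla\rho=\tfrac12\nabla(\rho^2)$, integrate by parts once to get $\tfrac12\int\rho^2(\rho-\rho_m)$, and expand.

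Second, and more substantively, your discussion of \eqref{est:mainenergy} is off-target. You invoke the Biot--Savart law \eqref{eq:bs}, but Proposition~\ref{lem:naive} is stated for the \emph{general} advected equation \eqref{eq:convectedks} with arbitrary divergence-free, no-flux $u$; the advection term has already vanished in the $L^2$ balance, so the specific form of $u$ plays no role whatsoever. The actual source of the coercive term $-\tfrac{1}{C_1\rho_m^4}\|\brho-\rho_m\|_{L^2}^6$ is the one-dimensional Nash inequality on $[0,\pi]$: since $\brho\ge0$ with $\int_0^\pi\brho\,dx_2=\pi\rho_m$, one has $\|\brho-\rho_m\|_{L^1([0,\pi])}\lesssim\rho_m$, and Nash then gives
\[
\|\p_2\brho\|_{L^2([0,\pi])}^2\gtrsim\frac{\|\brho-\rho_m\|_{L^2}^6}{\|\brho-\rho_m\|_{L^1}^4}\gtrsim\frac{1}{\rho_m^4}\|\brho-\rho_m\|_{L^2}^6.
\]
Combining this with $\|\nabla\rho\|_{L^2(\Omega)}^2\ge 2\pi\|\p_2\brho\|_{L^2([0,\pi])}^2$ furnishes the damping; Poincar\'e alone (which you mention) cannot produce the power $6$ or the $\rho_m^{-4}$ weight. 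The remaining work is to split the cubic term $\int(\rho-\rho_m)^3$ via $\rho-\rho_m=(\brho-\rho_m)+\trho$, use $\int_\T\trho\,dx_1=0$ to kill the $(\brho-\rho_m)^2\trho$ piece, and check that every surviving term is either absorbed into a fraction of the dissipation or bounded by $C\|\trho\|_{L^2}^4$.
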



The next proposition shows that one can upgrade the $L^\infty_tL^2_x$ control of $\rho$ to an $L^\infty_tL^\infty_x$ control:
\begin{prop}
\label{lem:L2toLinfty}
Let $\rho(t,x)$ be a regular solution to \eqref{eq:convectedks} on $[t_0,t_0 + T]$ for some $t_0\ge 0$ \red{and $N \ge \max(1, \rho_m)$ be an arbitrary number.} If $\|\rho(t,\cdot) - \rho_m\|_{L^2} \le 2N$ for all $t \in [t_0,t_0 + T]$, then we also have $\sup_{t_0 \le t \le t_0 + T}\|\rho(t,\cdot) - \rho_m\|_{L^\infty} \le CN^2$, where $C$ is a domain dependent constant.
\end{prop}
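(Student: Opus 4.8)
The plan is to run a parabolic $L^p$-iteration (Moser/Alikakos-type bootstrap) on the equation for $w := \rho - \rho_m$, using the assumed uniform-in-time $L^2$ bound on $w$ as the base case of the iteration. The key observation is that $\rho$ solves a drift-diffusion equation with a divergence-form nonlinearity, $\p_t w + u\cdot\nabla w - \Delta w + \divv(\rho\, \nabla c) = 0$ where $c := (-\Delta_N)^{-1} w$, and that the coupling is ``mild'' in the sense that $\nabla c$ gains two derivatives relative to $w$ (so $c \in H^2$ whenever $w \in L^2$, hence $\nabla c \in L^q$ for every $q < \infty$ by Sobolev embedding in 2D). First I would fix $p \ge 2$, multiply the equation by $p\, w |w|^{p-2}$ and integrate over $\Omega$; the transport term drops since $u$ is divergence-free with $u_2|_{\p\Omega}=0$, the diffusion term yields the good term $\tfrac{4(p-1)}{p}\|\nabla |w|^{p/2}\|_{L^2}^2$ (here I use $\p_2 \rho|_{\p\Omega}=0$ to kill the boundary term), and the nonlinear term, after integrating by parts, becomes $\int_\Omega \rho\, \nabla c \cdot \nabla(w|w|^{p-2})\,dx = (p-1)\int_\Omega (w+\rho_m)\,\nabla c\cdot w|w|^{p-2}\,dx$, which splits into a term I can absorb and a term controlled using $\Delta c = -w$ after integrating by parts once more, producing essentially $\int |w|^{p+1} + \rho_m\int|w|^p$ type quantities.

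The heart of the matter is then a recursive inequality of the form
\begin{equation*}
\frac{d}{dt}\|w\|_{L^p}^p + \frac{c}{p}\|\nabla|w|^{p/2}\|_{L^2}^2 \le C p \left(\|w\|_{L^{p+1}}^{p+1} + \rho_m\|w\|_{L^p}^p\right),
\end{equation*}
and I would close it by the standard device: interpolate $\|w\|_{L^{p+1}}^{p+1}$ and the $L^p$ norm between $\|\nabla|w|^{p/2}\|_{L^2}^2$ (using the 2D Gagliardo--Nirenberg--Sobolev inequality, which on the bounded domain $\Omega$ involves also the lower norm $\||w|^{p/2}\|_{L^1} = \|w\|_{L^{p/2}}^{p/2}$) so that the gradient term is partially absorbed on the left and one is left with a bound of the shape $\frac{d}{dt}\|w\|_{L^p}^p \le C p^{\alpha}\big(\sup_t \|w\|_{L^{p/2}}^{p/2}\big)^{\beta}$ for suitable fixed exponents. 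Iterating over $p = 2, 4, 8, \dots$ (with $M_k := \sup_{[t_0,t_0+T]}\|w\|_{L^{2^k}}$, starting from $M_1 \le 2N$) and tracking the constants — the $p^\alpha$ factors contribute a convergent product $\prod_k (2^k)^{\alpha 2^{-k}}$, and the recursion $M_{k+1} \lesssim (\text{const})^{2^{-k}} M_k$ telescopes — yields $M_\infty = \sup_t\|w\|_{L^\infty} \le C (N^2 + \rho_m^2) \le C N^2$, where the last step uses $N \ge \max(1,\rho_m)$ to fold $\rho_m$ into $N$. Since the statement only asks for the power $N^2$, I do not need to optimize the iteration; I just need each $M_{k+1}$ to be bounded by a geometric factor times $M_k$ plus a harmless additive constant of order $\rho_m$, and homogeneity forces the final power to be $2$ (because $w \mapsto \lambda w$, $\rho_m \mapsto \lambda \rho_m$ scales the nonlinear term like the $L^2$ mass squared controls $L^\infty$).

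The main obstacle is bookkeeping the $p$-dependence of the constants through the Gagliardo--Nirenberg step on a bounded domain with the lower-order $L^{p/2}$ term present, and verifying that the nonlinear term's contribution $\int |w|^{p+1}$ is genuinely subcritical enough to be absorbed — this is exactly where 2D is used and where the extra regularity of $\nabla c$ (two derivatives gained) makes the Keller--Segel nonlinearity behave like a tame lower-order perturbation rather than a critical one. An alternative, possibly cleaner route I would fall back on if the constants get unwieldy: rewrite the equation in Duhamel form against the heat semigroup $e^{t\Delta_N}$, use the uniform $L^2$ bound on $w$ together with $\|\nabla c\|_{L^\infty} \lesssim \|c\|_{W^{2,q}} \lesssim \|w\|_{L^q}$ for $q>2$, and bootstrap $\|w(t)\|_{L^q}$ up a finite ladder $2 \to q_1 \to q_2 \to \cdots \to \infty$ using the smoothing estimates $\|e^{t\Delta_N}\divv F\|_{L^{q'}} \lesssim t^{-1/2 - (1/q - 1/q')}\|F\|_{L^q}$; the accumulating time-singularity is integrable at each finite stage, and only finitely many steps are needed to reach $L^\infty$ in 2D. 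Either way the output is the asserted bound $\sup_t\|\rho - \rho_m\|_{L^\infty} \le CN^2$.
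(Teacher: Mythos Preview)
Your proposal is correct and takes essentially the same approach as the paper: the paper's proof consists entirely of a citation to \cite[Proposition~9.1]{kiselev2016suppression}, which is precisely the Moser/Alikakos $L^p$-iteration you outline, together with the observation (which you also make) that the Neumann condition on $\rho$ and the no-flux condition on $u$ kill all boundary terms in the integrations by parts. Your sketch is in fact more detailed than anything the paper itself supplies.
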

\begin{proof}
    The proof follows verbatim from that of \cite[Proposition 9.1]{kiselev2016suppression}, where it is handled for $\T^2$. We can do this since all integrations by parts do not introduce any boundary terms due to the Neumann boundary condition that $\rho$ satisfies as well as the no-flux boundary condition satisfied by $u$.
\end{proof}

Finally, we establish a corollary shows a doubling time for the $L^2$ norm $\|\rho - \rho_m\|_{L^2}^2$.
\begin{cor}
    \label{cor:chartime}
    Suppose $\rho(t,x)$ is a regular solution to \eqref{eq:convectedks} on $[t_0, t_0 + T]$ for some $t_0 \ge 0$, \red{$T > 0$, and $N > 0$ being an arbitrary number.} If $\|\rho(t_0) - \rho_m\|_{L^2}^2 \le N$, then there exists a time \red{$T_*(N,\rho_m) = \frac{\min(N^{-1},\rho_m^{-1})}{8(1+C_0)} =: c_0\min(N^{-1},\rho_m^{-1})$} such that
    \begin{equation}
    \label{est:naiveL21}
        \sup_{t_0\le t \le t_0 + T_*(N,\rho_m)}\|\rho(t) - \rho_m\|_{L^2}^2 \le 2N,
    \end{equation}
    and
    \begin{equation}
    \label{est:naiveL2}
    \int_{t_0}^{t_0 + T_*(N,\rho_m)}\|\nabla \rho(t)\|_{L^2}^2 dt \le 2N.
    \end{equation}
    \red{Here, $C_0$ is fixed as in Proposition \ref{lem:naive}.}
\end{cor}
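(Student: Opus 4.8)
The plan is to run a soft continuity (bootstrap) argument off the differential inequality \eqref{est:naive0}. Writing $y(t) := \|\rho(t) - \rho_m\|_{L^2}^2$, the hypothesis is $y(t_0) \le N$, and Proposition \ref{lem:naive} supplies, on the interval of existence,
\begin{equation*}
y'(t) + \|\nabla \rho(t)\|_{L^2}^2 \le C_0\, y(t)^2 + 2\rho_m\, y(t).
\end{equation*}
First I would let $\tau$ be the largest time such that $y(t) \le 2N$ for all $t \in [t_0,\tau]$; since $y$ is continuous and $y(t_0) \le N < 2N$, we have $\tau > t_0$. On $[t_0,\tau]$ the crude bound $y \le 2N$ turns the inequality into $y'(t) \le 4C_0 N^2 + 4\rho_m N$, hence
\begin{equation*}
y(t) \le N + \big(4C_0 N^2 + 4\rho_m N\big)(t - t_0), \qquad t \in [t_0,\tau].
\end{equation*}

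The only arithmetic input is the elementary inequality $\big(C_0 N + \rho_m\big)\min(N^{-1},\rho_m^{-1}) \le C_0 + 1$, which is immediate from $\max(N,\rho_m)\min(N^{-1},\rho_m^{-1}) = 1$. With $T_* = c_0\min(N^{-1},\rho_m^{-1})$ and $c_0 = \tfrac{1}{8(1+C_0)}$ this gives $\big(4C_0 N^2 + 4\rho_m N\big)\,T_* \le \tfrac{N}{2}$, so that $y(t) \le \tfrac{3}{2} N < 2N$ throughout $[t_0,\tau]$. By continuity $\tau$ therefore cannot be an interior point of the interval of existence; combined with this uniform $L^2$ bound and the local well-posedness theory (Theorem \ref{thm:lwpstokes}, together with the regularity criterion of Theorem \ref{thm:criteria} if a continuation past $t_0 + T$ is required), the regular solution is available on all of $[t_0, t_0 + T_*]$ with $y \le \tfrac{3}{2}N$, which yields \eqref{est:naiveL21}.

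For \eqref{est:naiveL2} I would integrate the differential inequality over $[t_0, t_0 + T_*]$, discard the nonnegative endpoint term $y(t_0 + T_*)$, and use $y(t_0) \le N$ together with the pointwise bound $C_0 y^2 + 2\rho_m y \le 4C_0 N^2 + 4\rho_m N$ (valid on $[t_0, t_0 + T_*]$ by the previous step) to obtain
\begin{equation*}
\int_{t_0}^{t_0 + T_*}\|\nabla\rho(t)\|_{L^2}^2\, dt \le N + \big(4C_0 N^2 + 4\rho_m N\big)\,T_* \le \tfrac{3}{2} N \le 2N.
\end{equation*}
As this is a routine bootstrap, I do not expect a genuine obstacle; the only points needing care are carrying the constant $C_0$ from Proposition \ref{lem:naive} through the estimates so that $T_*$ acquires the precise stated form, and the (standard) continuation step, which is harmless once the $L^2$ bound has been established.
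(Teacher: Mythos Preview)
Your proposal is correct and follows essentially the same barrier/bootstrap argument as the paper: both integrate the differential inequality \eqref{est:naive0} on the interval where $y(t)\le 2N$, use the explicit form of $T_*$ to see that the growth is strictly below $N$, and then re-integrate to get the gradient bound. Your arithmetic via $(C_0 N+\rho_m)\min(N^{-1},\rho_m^{-1})\le C_0+1$ is a clean way to verify the constant, and your slightly sharper $\tfrac{3}{2}N$ is harmless. The only remark is that the continuation step you flag is not really needed here (and the paper does not invoke it): the corollary is stated for a solution already assumed regular on $[t_0,t_0+T]$, and in the applications the ambient existence interval always contains $[t_0,t_0+T_*]$.
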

\begin{proof}
    To show \eqref{est:naiveL21}, we employ a standard barrier argument: suppose that there exists a time $\tau \in [t_0, t_0 + T_*(N,\rho_m)]$ such that $\|\rho(\tau) - \rho_m\|_{L^2}^2 = 2N$, and $\|\rho(t) - \rho_m\|_{L^2}^2 < 2N$ for any $t \in [t_0,\tau)$. Integrating \eqref{est:naive0} from $t_0$ to $\tau$, we have
    {
    \begin{align*}
        2N &= \|\rho(\tau) - \rho_m\|_{L^2}^2 \le N + 2\rho_m\int_{t_0}^\tau \|\rho(s) - \rho_m\|_{L^2}^2 dx + C_0\int_{t_0}^{\tau}\|\rho(s) - \rho_m\|_{L^2}^4 ds\\
        &\le N + 4\rho_m NT_* + 4N^2C_0T_* < 2N,
    \end{align*}
    where we used the definition of $T_*$ in the final inequality. However, this yields a contradiction.}
    
    To show \eqref{est:naiveL2}, we integrate \eqref{est:naive0} from $t_0$ to $t_0 + T_*(N,\rho_m)$, which yields:
    {
    \begin{align*}
        \|\rho(t_0 + T_*(N,\rho_m)) &- \rho_m\|_{L^2}^2 + \int_{t_0}^{t_0 + T_*(N,\rho_m)}\|\nabla \rho(t)\|_{L^2}^2 dt\\
        &\le N + 2\rho_m\int_{t_0}^{t_0 +T_*(N,\rho_m)}\|\rho(t) - \rho_m\|_{L^2}^2 dt + C_0\int_{t_0}^{t_0 + T_*(N,\rho_m)}\|\rho(t) - \rho_0\|_{L^2}^{4}dt\\
        &\le N + 2\rho_m(2N)T_* + C_0(4N^2)T_* < N + \frac{N}{2(1+C_0)} + \frac{C_0N}{2(1+C_0)} < 2N.
    \end{align*}
    }
    The proof is thus complete.
\end{proof}

\red{\subsubsection{Estimates for Static Problem \eqref{eq:ksstatstokes}}
Now, we turn to some estimates concerning the system \eqref{eq:ksstatstokes}, which crucially exploit the structure of the Biot-Savart-type relation \eqref{eq:bs}. As our first step, we prove the following key estimate which displays the smallness of a mixing-type norm in the regime of large parameter $g$.
}
\begin{prop}
\label{lem:apriori}
Let $\rho(t,x)$ be a regular solution to equation \eqref{eq:ksstatstokes} on $[t_0, t_0 + T]$ for some $t_0 \ge 0$ and $T > 0$. \red{Let $N > 0$ be an arbitrary number. If $\|\rho(t_0) - \rho_m\|_{L^2}^2 \le N$}, the following bound holds:
\begin{equation}
\label{est:H-2bd}
\int_{t_0}^{t_0 + T_*(N,\rho_m)}\|(-\Delta_D)^{-1}\p_1\rho\|_{L^2}^2 dt \le C(\rho_m)g^{-1}.
\end{equation}
\end{prop}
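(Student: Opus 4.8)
The strategy is to extract the coercive $H^{-1}_0$-type quantity $\|(-\Delta_D)^{-1}\p_1\rho\|_{L^2}$ from the energy balance of the static problem, exploiting that the Biot--Savart relation \eqref{eq:bs} makes the buoyancy work term sign-definite and proportional to $g$. The natural energy is the potential-type functional $\int_\Omega x_2\, \rho(t,x)\,dx$ (or, equivalently after subtracting the mean, $\int_\Omega x_2(\rho-\rho_m)\,dx$), whose time derivative along \eqref{eq:ksstatstokes} produces, via the $u_2$-transport term, exactly the expression $-g\|(-\Delta_D)^{-1}\p_1\rho\|_{L^2}^2$ up to lower-order contributions; this is the same mechanism used in \cite{hu2023suppression}. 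I would first record that this potential functional is bounded above and below by $C(\rho_m, N)$-type constants on the time interval $[t_0, t_0+T_*(N,\rho_m)]$, using Corollary \ref{cor:chartime} (which gives $\sup_t\|\rho(t)-\rho_m\|_{L^2}^2 \le 2N$ on that interval) together with the crude bound $|x_2| \le \pi$ on $\Omega$ and Cauchy--Schwarz.

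\textbf{Key steps.} First I would compute $\frac{d}{dt}\int_\Omega x_2(\rho-\rho_m)\,dx$ by pairing the $\rho$-equation in \eqref{eq:ksstatstokes} against $x_2$: integration by parts turns $-\Delta\rho$ and the chemotactic divergence term into boundary/interior terms that are controlled by $\|\nabla\rho\|_{L^2}$, $\|\rho-\rho_m\|_{L^2}$ and $\|\rho-\rho_m\|_{L^\infty}$ (the latter handled through Proposition \ref{lem:L2toLinfty}), while the advection term $u\cdot\nabla\rho$ contributes $-\int_\Omega x_2\, \divv(u\rho) = \int_\Omega u_2\rho\,dx = \int_\Omega u_2(\rho-\rho_m)\,dx$ since $\int_\Omega u_2\,dx$-type terms vanish. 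Second, I would substitute the velocity law: from \eqref{eq:bs}, $u_2 = g\,\p_1(-\Delta_D)^{-2}\p_1\rho = -g\,\p_1(-\Delta_D)^{-1}\big[(-\Delta_D)^{-1}\p_1\rho\big]$, so that $\int_\Omega u_2\rho\,dx$ becomes, after integrating $\p_1$ by parts onto $\rho$ (periodic in $x_1$, no boundary term) and using self-adjointness of $(-\Delta_D)^{-1}$, precisely $g\int_\Omega \big[(-\Delta_D)^{-1}\p_1\rho\big]\big[(-\Delta_D)^{-1}\p_1\rho\big]\,dx = g\|(-\Delta_D)^{-1}\p_1\rho\|_{L^2}^2$ with a favorable sign (one should double-check the sign and absorb it into the potential's orientation, i.e. possibly work with $\int x_2\rho$ vs. $\int(\pi-x_2)\rho$). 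Third, integrate in time over $[t_0, t_0+T_*(N,\rho_m)]$: the left side telescopes to a difference of the potential at two times, bounded by $C(\rho_m, N)$; the chemotactic and diffusive remainder terms integrate to something bounded by $C(\rho_m)\int_{t_0}^{t_0+T_*}(\|\nabla\rho\|_{L^2}^2 + 1)\,dt$, which is $O(C(\rho_m, N))$ by \eqref{est:naiveL2} and the length of the interval $T_* \lesssim \min(N^{-1},\rho_m^{-1})$. Rearranging gives $g\int_{t_0}^{t_0+T_*}\|(-\Delta_D)^{-1}\p_1\rho\|_{L^2}^2\,dt \le C(\rho_m, N)$, hence the claimed bound $C(\rho_m)g^{-1}$ — here I would need to track that the $N$-dependence of the constant can be absorbed into a $\rho_m$-only dependence, presumably because $N$ itself will later be chosen as $N_0(\rho_m, \|\rho_0-\rho_m\|_{L^2})$ (the statement as written keeps only $\rho_m$, so some care is needed: likely the relevant remainder terms scale so that, after using $T_* \le c_0/N$, the $N$-factors cancel, leaving a clean $C(\rho_m)$).

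\textbf{Main obstacle.} The delicate point is bookkeeping the constants so that the final bound depends only on $\rho_m$ and not on $N$ — this requires that every $N$-dependent term on the right be multiplied by a compensating power of $T_*(N,\rho_m) \sim N^{-1}$; the $\int\|\nabla\rho\|_{L^2}^2\,dt \le 2N$ bound contributes one factor of $N$, the $\|\rho-\rho_m\|_{L^\infty} \lesssim N^2$ bound from Proposition \ref{lem:L2toLinfty} contributes more, and one has to verify these are genuinely killed by $T_*$ and by the structure of the chemotactic term (which is quadratic in $\rho-\rho_m$ and so should come with two powers that pair against the nonlinearity's own smallness). A secondary technical issue is making the integration by parts in the chemotactic term rigorous near $\p\Omega$ and correctly identifying which boundary terms vanish under $\p_2\rho|_{\p\Omega}=0$ and $u_2|_{\p\Omega}=0$; these are routine given the regularity of the solution but must be stated. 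If the $N$-dependence genuinely cannot be removed, the fallback is that the statement should read $C(\rho_m, N)g^{-1}$ and the later application only ever invokes it with $N$ fixed by initial data, which is harmless for Theorem \ref{thm:stokeswp}.
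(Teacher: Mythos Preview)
Your approach is essentially identical to the paper's: both test the $\rho$-equation against the weight $x_2$, identify the advection contribution $\int_\Omega u_2\rho\,dx$ with $g\|(-\Delta_D)^{-1}\p_1\rho\|_{L^2}^2$ via the Biot--Savart law \eqref{eq:bs} and self-adjointness of $(-\Delta_D)^{-1}$, and integrate in time over $[t_0,t_0+T_*]$ to isolate the desired quantity.

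Your diagnosis of the main obstacle is exactly right, but your proposed tools for resolving it are not the ones that work. The paper eliminates the $N$-dependence by leaning on the conserved $L^1$ norm rather than $L^2$ or $L^\infty$. For the potential endpoint term $I_1 = \int_\Omega x_2(\rho(t_0+T_*)-\rho(t_0))\,dx$, the paper bounds this by $C\|\rho_0\|_{L^1} = C\rho_m$ using nonnegativity and mass conservation; your Cauchy--Schwarz route would give $CN^{1/2}$, which does not cancel. For the chemotactic term $I_3 = \int_\Omega \rho\,\p_2(-\Delta_N)^{-1}(\rho-\rho_m)\,dx$, the paper avoids $L^\infty$ entirely and instead uses the Sobolev embedding $W^{1,6/5}\hookrightarrow L^2$, Calder\'on--Zygmund elliptic regularity in $L^{6/5}$, and the interpolation $\|\rho-\rho_m\|_{L^{6/5}} \le C\|\rho-\rho_m\|_{L^1}^{2/3}\|\rho-\rho_m\|_{L^2}^{1/3} \le C\rho_m^{2/3}N^{1/6}$, yielding $I_3 \le C(\rho_m^{5/3}N^{1/6}+\rho_m^{2/3}N^{2/3})$; multiplying by $T_* = c_0\min(N^{-1},\rho_m^{-1})$ then gives $C(\rho_m)$ after a short case split on $N\lessgtr 1$. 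Using Proposition~\ref{lem:L2toLinfty} as you suggest would introduce an $N^2$ factor that $T_*\sim N^{-1}$ cannot absorb. The diffusion term $I_2$ is handled exactly as you describe, via Cauchy--Schwarz in time against \eqref{est:naiveL2}. So the missing ingredient in your plan is simply: route every soft estimate through the $L^1$ mass rather than the $L^2$ or $L^\infty$ norm of $\rho-\rho_m$.
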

\begin{proof}
    On one hand, we compute that
    \begin{align}
    \label{apriori:aux1}
    \int_{t_0}^{t_0 + T_*(N,\rho_m)} \|(-\Delta_D)^{-1}\p_1\rho\|_{L^2}^2 dt &= \int_{t_0}^{t_0 + T_*(N,\rho_m)} \int_\Omega (-\Delta_D)^{-1}\p_1\rho (-\Delta_D)^{-1}\p_1 \rho dxdt\notag\\
    &= \int_{t_0}^{t_0 + T_*(N,\rho_m)}\int_\Omega \p_1\rho (-\Delta_D)^{-2}\p_1 \rho dxdt\notag\\
    &= -\int_{t_0}^{t_0 + T_*(N,\rho_m)}\int_\Omega \rho \p_1(-\Delta_D)^{-2}\p_1 \rho dxdt\notag\\
    &= \frac{1}{g}\int_{t_0}^{t_0 + T_*(N,\rho_m)}\int_\Omega \rho u_2 dxdt.
    \end{align}
    Note that we have used the self-adjointness of $(-\Delta_D)^{-1}$ in the second equality, integration by parts in the third equality, and \eqref{eq:bs} in the last equality. On the other hand, we test the $\rho$-equation of \eqref{eq:ksstatstokes} by $x_2$, which yields:
    \begin{equation}\label{est:mixaux1}
    \begin{split}
        \int_\Omega \rho u_2 dx &= -\int_\Omega x_2(u\cdot\nabla \rho) dx = \int_\Omega x_2\left(\p_t \rho - \Delta \rho + \divv(\rho\nabla(-\Delta_N)^{-1}(\rho - \rho_m))\right)dx =: \sum_{i = 1}^3 I_i,
        \end{split}
    \end{equation}
    where the first equality follows from integration by parts and no-flux boundary condition satisfied by $u$. Integrating in time from $t_0$ to $t_0 + T_*(N,\rho_m)$ and using Proposition \ref{lem:mass} as well as the bound \eqref{est:naiveL2}, we estimate time integrals of $I_1, I_2$ by
    \begin{align*}
        \left|\int_{t_0}^{t_0 + T_*(N,\rho_m)}I_1dt\right| &= \left|\int_\Omega x_2(\rho(t_0 + T_*(N,\rho_m)) - \rho(t_0))dx\right| \le C\|\rho_0\|_{L^1} {\le C\rho_m},\\
        \left|\int_{t_0}^{t_0 + T_*(N,\rho_m)}I_2dt\right| &= \left|\int_{t_0}^{t_0 + T_*(N,\rho_m)}\int_\Omega \p_2 \rho dxdt\right| \le CT_*(N,\rho_m)^{1/2}\left(\int_{t_0}^{t_0 + T_*(N,\rho_m)}\|\nabla\rho(t)\|_{L^2}^2dt\right)^{1/2}\le C.
    \end{align*}
    Here, we used the definition of $T_*(N,\rho_m)$ and \eqref{est:naiveL2} in the final inequality of $I_2$ estimate. To estimate the term $I_3$ in \eqref{est:mixaux1}, we use Sobolev embedding, elliptic estimates, and interpolation inequality to obtain that
    \begin{align*}
            I_3 &= \int_\Omega \rho \p_2(-\Delta_N)^{-1}(\rho - \rho_m) \le \|\rho\|_{L^2}\|\p_2(-\Delta_N)^{-1}(\rho - \rho_m)\|_{L^2}\\
            &\le (\|\rho - \rho_m\|_{L^2} + {\sqrt{2\pi}}\rho_m)\|\p_2(-\Delta_N)^{-1}(\rho - \rho_m)\|_{W^{1,6/5}}\\
            &\le C(\|\rho - \rho_m\|_{L^2} + {\sqrt{2\pi}}\rho_m)\|\rho - \rho_m\|_{L^{6/5}}\\
            &\le C\rho_m^{2/3}\|\rho - \rho_m\|_{L^2}^{1/3} (\|\rho - \rho_m\|_{L^2} + \rho_m)\\
            &= {C(\rho_m^{5/3}N^{1/6} + \rho_m^{2/3}N^{2/3}).}
        \end{align*}
    Then we may estimate the time integral of $A_3$ by
    {
    \begin{align*}
    \int_{t_0}^{t_0 + T_*(N,\rho_m)}I_3dt &\le C\min(N^{-1},\rho_m^{-1})(\rho_m^{5/3}N^{1/6} + \rho_m^{2/3}N^{2/3}) \\
    &\le \begin{cases}
        \rho_m^{2/3} + \rho_m^{-1/3},& N \le 1,\\
        \rho_m^{5/3} N^{-5/6} + \rho_m^{2/3}N^{-1/3},& N > 1
    \end{cases} \le C(\rho_m).
    \end{align*}
    }
    Collecting the estimates above yields:
    \begin{equation}
    \left|\int_{t_0}^{t_0 + T_*(N,\rho_m)}\int_\Omega \rho u_2 dxdt \right|\le C(\rho_m),
    \end{equation}
    Therefore by \eqref{apriori:aux1}:
    $$
    \int_{t_0}^{t_0 + T_*(N,\rho_m)} \|(-\Delta_D)^{-1}\p_1\rho\|_{L^2}^2 dt \le C(\rho_m)g^{-1},
    $$
    which concludes the proof.
\end{proof}

\red{Note that the norm $\|(-\Delta_D)^{-1}\p_1 \rho\|_{L^2}$ could be viewed as a quantity that encodes information on mixing. This is because, formally, this quantity has the same scaling as the classical mix-norm $\|\rho - \rho_m\|_{H^{-1}_0}$ (see \cite{iyer2014lower} for example). Since generally smallness of $\|\rho - \rho_m\|_{H^{-1}}$ induces enhanced dissipation by creating large gradients, it is tempting to uncover a similar effect by exploiting the smallness of $\|(-\Delta_D)^{-1}\p_1 \rho\|_{L^2}$ in our setting. In order to achieve this goal, we follow the approach taken in \cite{hu2023suppression}, which studies a similar quantity $\|\p_1 \rho\|_{H^{-1}_0}$.} The technical ingredient that we need is a Nash's inequality with improved coefficient: let us recall the following result from \cite{hu2023suppression}, which is valid for any function $\rho \in L^2([s,r], H^1(\Omega))$, $0 \le s < r \le \infty$:

\begin{prop}[Corollary 6.2 of \cite{hu2023suppression}]
\label{prop:concentration}
Assume that for some \red{$K > 0$}, there is
    \begin{equation}\label{Hassump}
        \int_{s}^{r} \|\p_1 \rho\|_{H^{-1}_{0}}^2 dt \leq K^{-1} \int_{s}^{r}\|\trho\|_{L^2}^2 dt.
    \end{equation}
Then there exists a constant $C > 0$ depending only on $\Omega$ such that
        \begin{equation}\label{GNN1}
        \int_{s}^{r}\|\trho\|_{L^2}^2 \,dt \le CK^{-1/4}\left(\int_{s}^{r}\|\trho\|_{L^1}^2 \,dt\right)^{1/2} \left(\int_{s}^{r}\|\nabla \trho\|_{L^2}^2 \, dt\right)^{1/2}.
        \end{equation}
\end{prop}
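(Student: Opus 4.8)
\textbf{Proof proposal for Proposition \ref{prop:concentration}.}

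I would view the statement as a time-integrated Nash inequality that has been \emph{improved} by the hypothesis \eqref{Hassump}. The starting point is that the plain two-dimensional Nash inequality $\|f\|_{L^2}^2 \le C\|f\|_{L^1}\|\nabla f\|_{L^2}$, applied to $f=\trho$ (legitimate since $\trho$ has zero $x_1$-average) and integrated in time with Cauchy--Schwarz, already gives
\[
\int_s^r\|\trho\|_{L^2}^2\,dt \le C\left(\int_s^r\|\trho\|_{L^1}^2\,dt\right)^{1/2}\left(\int_s^r\|\nabla\trho\|_{L^2}^2\,dt\right)^{1/2};
\]
the task is to harvest the extra factor $K^{-1/4}$ out of \eqref{Hassump}. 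Heuristically, \eqref{Hassump} forces $\trho$ (in a time-averaged sense) to carry little mass at $x_1$-frequencies that also sit at low $x_2$-frequencies, so its mass must live at high $x_1$-frequencies or at high $x_2$-frequencies, and in either case this produces ``surplus dissipation'' that is traded for the small constant. (If $K\lesssim 1$ the claim is just the plain inequality, so I would assume $K$ large.)

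\textbf{Step 1: a pointwise frequency-split estimate.} Expand $\trho$ in Fourier modes in $x_1$ and split at a threshold $M\ge 1$, to be optimized as $M\sim K^{1/4}$ at the end: $\trho = P_{\le M}\trho + P_{>M}\trho$. For the high part, Poincar\'e/Bernstein in $x_1$ gives $\|P_{>M}\trho\|_{L^2}^2 \le M^{-2}\|\p_1\trho\|_{L^2}^2 \le M^{-2}\|\nabla\trho\|_{L^2}^2$. For the low part, the key point is that on each $x_1$-frequency $k$ with $1\le|k|\le M$ the Dirichlet Laplacian $-\Delta_D$ restricts to $-\p_2^2+k^2$ (Dirichlet in $x_2$), whose inverse dominates $(-\p_2^2+M^2)^{-1}$ as a quadratic form; splitting $P_{\le M}\trho$ further according to whether the $x_2$-Dirichlet-frequency is $\le M$ or $>M$, the ``low--low'' piece is controlled by $CM^2\|\p_1\rho\|_{H^{-1}_0}^2$ (the weight $k^2$ in the definition of $\|\p_1\rho\|_{H^{-1}_0}^2$ being harmless because $|k|\ge 1$) and the ``low--high'' piece by $M^{-2}\|\p_2\trho\|_{L^2}^2$. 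Altogether this gives, for every $M\ge 1$,
\[
\|\trho\|_{L^2}^2 \le C M^2\|\p_1\rho\|_{H^{-1}_0}^2 + C M^{-2}\|\nabla\trho\|_{L^2}^2 .
\]

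\textbf{Step 2: interpolating with Nash and closing.} I would then upgrade the bare term $M^{-2}\|\nabla\trho\|_{L^2}^2$ using the plain two-dimensional Nash inequality, combine the two resulting bounds by a geometric mean (equivalently, by distinguishing whether the mixing term or the dissipation term dominates), integrate in time, and apply H\"older to separate $\int_s^r\|\trho\|_{L^1}^2$ from $\int_s^r\|\nabla\trho\|_{L^2}^2$. Inserting \eqref{Hassump} to replace $\int_s^r\|\p_1\rho\|_{H^{-1}_0}^2$ by $K^{-1}\int_s^r\|\trho\|_{L^2}^2$, absorbing that multiple of $\int_s^r\|\trho\|_{L^2}^2$ into the left-hand side, and finally choosing $M\sim K^{1/4}$ to balance, produces the claimed prefactor $K^{-1/4}$ in front of $\bigl(\int_s^r\|\trho\|_{L^1}^2\bigr)^{1/2}\bigl(\int_s^r\|\nabla\trho\|_{L^2}^2\bigr)^{1/2}$.

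\textbf{Main obstacle.} The delicate point is the mismatch between the Neumann boundary condition satisfied by $\trho$ and the Dirichlet Laplacian defining $\|\cdot\|_{H^{-1}_0}$: the clean interpolation $\|f\|_{L^2}\lesssim\|f\|_{\dot{H}^{-1}}^{1/2}\|\nabla f\|_{L^2}^{1/2}$ is simply false here (it fails for $f$ constant), so the low-$x_1$-frequency piece has to be treated by the mode-by-mode spectral argument of Step 1 rather than by a black-box interpolation, and that is where the exact powers of $M$ (hence of $K$) come from. A secondary, purely bookkeeping difficulty is arranging the H\"older exponents in Step 2 so as to land on precisely the exponents $\tfrac12,\tfrac12$ together with the power $K^{-1/4}$, rather than on a cruder but still correct variant; I would not reprove this and instead invoke \cite[Corollary 6.2]{hu2023suppression}.
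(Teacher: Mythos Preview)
Your proposal and the paper agree: the paper does not reprove the inequality either, but simply invokes \cite[Corollary~6.2]{hu2023suppression} for $K\ge 1$ and covers $K\in(0,1)$ by the unimproved Nash inequality (since then $K^{-1/4}>1$), exactly as you do. Your additional heuristic sketch of the mechanism (frequency splitting at scale $M\sim K^{1/4}$, Neumann/Dirichlet mismatch as the delicate point) is helpful context but goes beyond what the paper records; note, as you already flag, that the bookkeeping in your Step~2 would not land on the exponents $(\tfrac12,\tfrac12,K^{-1/4})$ from the displayed Step~1 bound alone, so deferring to the cited corollary is the right call.
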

\red{Note that the original statement in Corollary 6.2 of \cite{hu2023suppression} only considers the case when $K \ge 1$. However, one could extend the result for any $K > 0$ in a rather trivial way: for $K \in (0,1)$, we simply invoke the original Nash's inequality for $\trho$ without using the assumption \eqref{Hassump}. The fact that $K^{-1/4} > 1$ immediately yields the result.}

\red{With the proposition above, we may conclude the following crucial lemma, which precisely characterizes the creation of large gradients when the norm $\|(-\Delta_D)^{-1}\p_1 \rho\|_{L^2}$ is small in an appropriate sense.
\begin{lem}
\label{lem:improvedNash}
    Fix $g > 0$. Assume that there is some constant $C > 0$ so that 
    \begin{equation}
    \label{aux1}
    \int_s^r \|(-\Delta_D)^{-1}\p_1 \rho\|_{L^2}^2 dt \le Cg^{-9/10}\left(\int_s^r \|\trho\|_{L^2}^2 dt\right)^2.
    \end{equation}
    Then the following estimate holds:
    \begin{equation}
        \label{aux2}
        \int_s^r \|\nabla\rho\|_{L^2}^2 dt \ge \tilde{C}g^{9/50}\left(\int_s^r \|\rho - \rho_m\|_{L^1}^2 dt\right)^{-4/5}\left(\int_s^r \|\trho\|_{L^2}^2 dt \right)^{8/5},
    \end{equation}
    for some constant $\tilde{C} > 0$ that only depends on the domain $\Omega$ and constant $C$ appearing in \eqref{aux1}.
\end{lem}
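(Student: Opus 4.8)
The plan is to convert the hypothesis \eqref{aux1} --- which controls a quantity that is one derivative smoother than the usual mix-norm --- into the exact form required by the improved Nash-type inequality of Proposition \ref{prop:concentration}, and then to solve the resulting algebraic inequality for $\int_s^r\|\nabla\rho\|_{L^2}^2\,dt$. Throughout one may assume $\int_s^r\|\nabla\rho\|_{L^2}^2\,dt>0$ and $\int_s^r\|\trho\|_{L^2}^2\,dt>0$, since otherwise $\trho$ vanishes for a.e.\ $t\in[s,r]$ and \eqref{aux2} holds trivially; we also tacitly assume $\rho\in L^2([s,r];H^1(\Omega))$, as is the case for a regular solution.

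The first step I would carry out is an interpolation bridging the two negative-order quantities. Writing $w:=(-\Delta_D)^{-1}\p_1\rho$, one has $\|\p_1\rho\|_{H^{-1}_0}^2=\int_\Omega\p_1\rho\,(-\Delta_D)^{-1}\p_1\rho\,dx=\|\nabla w\|_{L^2}^2$, while the elementary bound $\|\nabla w\|_{L^2}^2=-\int_\Omega w\,\Delta_D w\le\|w\|_{L^2}\|\Delta_D w\|_{L^2}$ gives, pointwise in $t$,
\[
\|\p_1\rho\|_{H^{-1}_0}^2\le\|(-\Delta_D)^{-1}\p_1\rho\|_{L^2}\,\|\p_1\rho\|_{L^2}\le\|(-\Delta_D)^{-1}\p_1\rho\|_{L^2}\,\|\nabla\rho\|_{L^2}.
\]
Integrating in $t$, applying the Cauchy--Schwarz inequality in time, and inserting \eqref{aux1} yields
\[
\int_s^r\|\p_1\rho\|_{H^{-1}_0}^2\,dt\le C^{1/2}g^{-9/20}\Big(\int_s^r\|\trho\|_{L^2}^2\,dt\Big)\Big(\int_s^r\|\nabla\rho\|_{L^2}^2\,dt\Big)^{1/2},
\]
which is precisely the hypothesis \eqref{Hassump} of Proposition \ref{prop:concentration} with $K=C^{-1/2}g^{9/20}\big(\int_s^r\|\nabla\rho\|_{L^2}^2\,dt\big)^{-1/2}>0$. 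Since a priori $K$ may be smaller than $1$, it is the extension of that proposition to arbitrary $K>0$, recorded right after its statement, that is invoked here.

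Next I would feed this into Proposition \ref{prop:concentration} and simplify the right-hand side using $\|\trho\|_{L^1}\lesssim\|\rho-\rho_m\|_{L^1}$ (because $\brho-\rho_m$ is the $x_1$-average of $\rho-\rho_m$), $\|\nabla\trho\|_{L^2}\le\|\nabla\rho\|_{L^2}$ (orthogonality of the $\brho$/$\trho$ splitting), and the explicit value $K^{-1/4}=C^{1/8}g^{-9/80}\big(\int_s^r\|\nabla\rho\|_{L^2}^2\,dt\big)^{1/8}$. The outcome is
\[
\int_s^r\|\trho\|_{L^2}^2\,dt\le C'\,g^{-9/80}\Big(\int_s^r\|\rho-\rho_m\|_{L^1}^2\,dt\Big)^{1/2}\Big(\int_s^r\|\nabla\rho\|_{L^2}^2\,dt\Big)^{5/8},
\]
with $C'$ depending only on $\Omega$ and $C$. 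Raising this to the power $8/5$ and rearranging to isolate $\int_s^r\|\nabla\rho\|_{L^2}^2\,dt$ produces exactly \eqref{aux2}, with $\tilde C=(C')^{-8/5}$; note $g^{(9/80)\cdot(8/5)}=g^{9/50}$ and $(1/2)\cdot(8/5)=4/5$, matching the claimed exponents.

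The point I expect to require the most care is the first step: the quantity in \eqref{aux1} is genuinely one derivative smoother than the mix-norm $\|\p_1\rho\|_{H^{-1}_0}$ appearing in Proposition \ref{prop:concentration}, so the interpolation must spend exactly one factor of $\|\nabla\rho\|_{L^2}$ to bridge the gap. This in turn makes $K$ depend on $\int_s^r\|\nabla\rho\|_{L^2}^2\,dt$ itself; the thing to verify is that this self-reference is benign --- the estimate closes after a single algebraic rearrangement rather than a genuine fixed-point argument --- which it does, provided the fractional exponents are tracked carefully so that the net gain is the positive power $g^{9/50}$.
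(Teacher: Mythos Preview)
Your proposal is correct and follows the same approach as the paper: the pointwise interpolation $\|\p_1\rho\|_{H^{-1}_0}^2\le\|(-\Delta_D)^{-1}\p_1\rho\|_{L^2}\|\nabla\rho\|_{L^2}$, Cauchy--Schwarz in time, and application of Proposition~\ref{prop:concentration} with $K=C^{-1/2}g^{9/20}\big(\int_s^r\|\nabla\rho\|_{L^2}^2\,dt\big)^{-1/2}$ are exactly the paper's steps. You have in fact spelled out details the paper leaves implicit---in particular the passages $\|\trho\|_{L^1}\lesssim\|\rho-\rho_m\|_{L^1}$, $\|\nabla\trho\|_{L^2}\le\|\nabla\rho\|_{L^2}$, and the final algebraic rearrangement producing the exponents $9/50$ and $4/5$.
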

}
\begin{proof}
    Using definition of $H^{-1}_0$ norm and Cauchy-Schwarz inequality, we have
    $$
    \|\p_1\trho\|_{H^{-1}_0}^2 = \int_\Omega \p_1\trho (-\Delta_D)^{-1}\p_1 \trho dx \le \|\p_1\trho\|_{L^2}\|(-\Delta_D)^{-1}\p_1 \rho\|_{L^2} \le \|\nabla \rho\|_{L^2}\|(-\Delta_D)^{-1}\p_1 \rho\|_{L^2},
    $$
    where we used the fact that $\p_1 \rho = \p_1 \trho$. Integrating the above inequality from $s$ to $r$ and applying Cauchy-Schwarz inequality again, we have
    \begin{align*}
    \int_s^r \|\p_1\trho\|_{H^{-1}_0}^2 dt &\le \left(\int_s^r\|(-\Delta_D)^{-1}\p_1 \rho\|_{L^2}^2dt\right)^{1/2} \left(\int_s^r\|\nabla \rho\|_{L^2}^2 dt\right)^{1/2}\\
    &\le C^{1/2}g^{-9/20}\left(\int_s^r\|\nabla \rho\|_{L^2}^2 dt\right)^{1/2}\int_s^r \|\trho\|_{L^2}^2 dt.
    \end{align*}
    Then an application of Proposition \ref{prop:concentration} via replacing $N$ by $C^{-1/2}g^{9/20}\left(\int_s^r\|\nabla \rho\|_{L^2}^2 dt\right)^{-1/2} > 0$ implies \eqref{aux2} after some rearrangements of the inequality.
\end{proof}
\subsection{Norm-Stabilizing Property of the Static Problem}
\label{subsect:statgwp}
With the \textit{a priori} estimates set up in previous sections, we are ready to prove the norm-stabilizing property (i.e. Proposition \ref{prop:barrier}), from which the global existence of the static problem \eqref{eq:ksstatstokes} follows.

\begin{proof}[Proof of Proposition \ref{prop:barrier}]
    \red{We first fix our choice of $N_0$ and $T_*$ as follows: choose $N_0$ so that
    \begin{align}
        N_0 \ge \max\{1, 2\rho_m, 2\|\rho_0 - \rho_m\|_{L^2}^2\},\label{N0choice1}\\ 
        N_0 \ge \frac{32^3C_1\rho_m^4(15/8 + 2C_1c_0)}{4c_0}\label{N0choice2},
    \end{align}
    and set $T_* = c_0\min(2N_0^{-1}, \rho_m) = 2c_0N_0^{-1}$. Recall that $C_0, C_1$ are universal constants fixed in Proposition \ref{lem:naive}; $c_0$ is also a universal constant fixed in Corollary \ref{cor:chartime}. Moreover, we fix $g \ge g_0(N_0, \rho_m)$, where the threshold $g_0$ will be chosen sufficiently large in the course of the proof. }
    
    We first observe that by Corollary \ref{cor:chartime}, given $\|\rho(t_0) - \rho_m\|_{L^2}^2 = N_0/2$, we have
    \begin{equation}
        \label{est:gradctrl}
        \int_{t_0}^{t_0 + T_*}\|\nabla \rho\|_{L^2}^2 dt \le N_0,
    \end{equation}
    and in particular \eqref{est:barrier1} holds.

    We thus focus on the proof of \eqref{est:barrier2}. Assume for the sake of contradiction that $\|\rho(t) - \rho_m\|_{L^2}^2 > N_0/8$ for any $t \in \left[t_0 + \frac{T_*}{2}, t_0 + T_*\right]$. For $g \ge g_0$, we discuss the following dichotomy:\\
    \textbf{Case 1: Small $\|\trho\|_{L^2}$.}
    \begin{equation}\label{aux3}\int_{t_0 + T_*/2}^{t_0 + T_*}\|\trho\|_{L^2}^2 dt \le c_0g^{-1/20}.\end{equation}
    In this case, we first observe that due to the contradiction assumption, there is a lower bound:
        $$
        \int_{t_0 + T_*/2}^{t_0 + T_*}\|\rho - \rho_m\|_{L^2}^2 dt \ge \frac{T_*N_0}{16} = \frac{c_0}{16}.
        $$
        Combining with \eqref{aux3} and using orthogonality between $\brho$ and $\trho$, we have
        \begin{equation}
            \label{aux4}
            \int_{t_0 + T_*/2}^{t_0 + T_*}\|\brho - \rho_m\|_{L^2}^2 dt \ge \left(\frac{1}{16} - g^{-1/20}\right)c_0 \ge \frac{c_0}{32},
        \end{equation}
        {if we choose
        \begin{equation}
            \label{g0cond1}
            g_0 \ge 32^{20}.
        \end{equation}
        }
        On the other hand, an application of H\"older's inequality implies that
        \begin{align*}
            \int_{t_0 + T_*/2}^{t_0 + T_*}\|\brho - \rho_m\|_{L^2}^2 dt &\le \left(\frac{T_*}{2}\right)^{2/3}\left(\int_{t_0 + T_*/2}^{t_0 + T_*}\|\brho - \rho_m\|_{L^2}^6 dt\right)^{1/3}.
        \end{align*}
    Combining the inequality above with \eqref{aux4}, we have
    \begin{equation}
    \label{aux5}
    \int_{t_0 + T_*/2}^{t_0 + T_*}\|\brho - \rho_m\|_{L^2}^6 dt \ge \frac{4}{T_*^2}\left(\int_{t_0 + T_*/2}^{t_0 + T_*}\|\brho - \rho_m\|_{L^2}^2 dt\right)^3 \ge \frac{4c_0^3}{32^3T_*^2} = \frac{4c_0N_0^2}{32^3}.
    \end{equation}
    \red{Now, we shall invoke energy estimate \eqref{est:mainenergy} and integrate the differential inequality from $T_*/2$ to $T_*$. Note that we can do this since $\|\rho(t) - \rho_m\|_{L^2}^2 > N_0/8 \ge \rho_m/4$ for any $t \in \left[t_0 + \frac{T_*}{2}, t_0 + T_*\right]$ by our contradiction assumption, and our choice of $N_0$ in \eqref{N0choice1}. Therefore, we arrive at
    \begin{align*}
        \|\rho(t_0 + T_*) - \rho_m\|_{L^2}^2 &\le \|\rho(t_0 + T_*/2) - \rho_m\|_{L^2}^2 - \frac{1}{C_1\rho_m^4}\int_{t_0 + T_*/2}^{t_0 + T_*} \|\brho - \rho_m\|_{L^2}^6 dt + C_1 \int_{t_0 + T_*/2}^{t_0 + T_*} \|\trho\|_{L^2}^4 dt\\
        &\le 2N_0 - \frac{4c_0}{32^3C_1\rho_m^4}N_0^2 + C_1\sup_{t_0 + T_*/2 \le t \le t_0 + T_*/2}\|\rho - \rho_m\|_{L^2}^2\int_{t_0 + T_*/2}^{t_0 + T_*} \|\trho\|_{L^2}^2 dt\\
        &\le 2N_0 - \frac{4c_0}{32^3C_1\rho_m^4}N_0^2 + 2C_1c_0N_0g^{-1/20} \le 2N_0 - \frac{4c_0}{32^3C_1\rho_m^4}N_0^2 + 2C_1c_0N_0.
    \end{align*}
    }
    Here, we used \eqref{aux5} in the second inequality, \eqref{aux3} in the third inequality, and $g \ge g_0 > 1$ in the last inequality.
    \red{Then by \eqref{N0choice2}, a straightforward computation yields}
    $$
     \|\rho(t_0 + T_*) - \rho_m\|_{L^2}^2 \le 2N_0 - \frac{4c_0}{32^3C_1\rho_m^4}N_0^2 + 2C_1c_0N_0 < \frac{N_0}{8},
    $$
    which is a contradiction to our assumption.
    
    \red{\noindent\textbf{Case 2: Large $\|\trho\|_{L^2}$.}
    \begin{equation}
        \int_{t_0 + T_*/2}^{t_0 + T_*}\|\trho\|_{L^2}^2 dt > c_0g^{-1/20}. \label{aux44}
    \end{equation}
    In this case, we first use estimate \eqref{est:H-2bd} to obtain that
    $$
    \int_{t_0 + T_*/2}^{t_0 + T_*} \|(-\Delta_D)^{-1}\p_1\rho\|_{L^2}^2 dt \le C(\rho_m)g^{-1} < C(\rho_m)c_0^{-2}g^{-9/10}\left(\int_{t_0 + T_*/2}^{t_0 + T_*}\|\trho\|_{L^2}^2 dt\right)^2.
    $$
    Then, we can invoke Lemma \ref{lem:improvedNash} with $C = C(\rho_m)c_0^{-2}$ to conclude that
    \begin{equation}
        \label{est:largediff1}
        \begin{split}
        \int_{t_0 + T_*/2}^{t_0 + T_*} \|\nabla\rho\|_{L^2}^2 dt &\ge C(\rho_m)g^{9/50}\left(\int_{t_0 + T_*/2}^{t_0 + T_*} \|\trho\|_{L^1}^2 dt\right)^{-4/5}\left(\int_{t_0 + T_*/2}^{t_0 + T_*} \|\trho\|_{L^2}^2 dt \right)^{8/5}\\
        &\ge C(\rho_m)T_*^{-4/5}g^{9/50}g^{-2/25} \ge C(\rho_m)N_0^{4/5}g^{1/10}.
        \end{split}
    \end{equation}
    Here, we applied Proposition \ref{lem:mass}, \eqref{aux44} in the second inequality, and used the definition of $T_*$ in the final inequality. Now choose $g_0$ sufficiently large that
    \begin{equation}
        \label{g0cond3}
        C(\rho_m)N_0^{4/5}g^{1/10} > N_0.
    \end{equation}
    Then together with \eqref{est:largediff1}, we would arrive at $\int_{t_0 + T_*/2}^{t_0 + T_*} \|\nabla\rho\|_{L^2}^2 dt > N_0$. However, this contradicts the \textit{a priori} estimate \eqref{est:gradctrl}.
    }
Up until this point, both cases in the dichotomy lead to a contradiction and we have completed the proof.
\end{proof}

\begin{rmk}
\label{rmk:statchoice}
\begin{enumerate}
    \item \red{The specific choice of parameters $N_0, T_*,$ and $g_0$ are summarized as follows: we first fix $N_0$ according to \eqref{N0choice1}, \eqref{N0choice2}, and we set $T_* = 2c_0N_0^{-1}$. After $N_0, T_*$ are fixed, we choose $g_0$ sufficiently large that \eqref{g0cond1} and \eqref{g0cond3} hold.}

    \item {We also remark that the choice of $N_0 \ge 2\|\rho_0 - \rho_m\|_{L^2}^2$ does not play a role in the proof of Proposition \ref{prop:barrier}. However, it is a rather technical choice that will play a role in the proofs of Corollary \ref{cor:statglobal} and Proposition \ref{prop:bootstrap}, which rely on a barrier argument.}
\end{enumerate}
\end{rmk}

Note that we can conclude global well-posedness of system \eqref{eq:statstokes} from Proposition \ref{prop:barrier}.
\begin{cor}\label{cor:statglobal}
    For arbitrary nonnegative initial data $\rho_0 \in H^1$, there exists $
    g_0 = g_0(\rho_m) > 0$ sufficiently large that the solution of the static problem \eqref{eq:ksstatstokes} is globally regular for all $g \ge g_0$.
\end{cor}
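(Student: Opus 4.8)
The plan is to iterate Proposition \ref{prop:barrier} on consecutive time intervals and combine with the regularity criterion of Theorem \ref{thm:criteria}. First I would fix the nonnegative initial datum $\rho_0 \in H^1$ and let $N_0 = N_0(\rho_m, \|\rho_0 - \rho_m\|_{L^2})$, $T_* = T_*(N_0,\rho_m)$, and $g_0 = g_0(N_0,\rho_m)$ be exactly the quantities produced by Proposition \ref{prop:barrier}; note $g_0$ depends only on $\rho_m$ and $\|\rho_0-\rho_m\|_{L^2}$, hence ultimately only on $\rho_0$ (and $\Omega$), consistent with the statement. By Theorem \ref{thm:lwpstokes} there is a unique regular solution on a maximal interval $[0,\calT_{\max})$; the goal is to show $\calT_{\max} = \infty$.

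The core step is a barrier/continuation argument on the quantity $\|\rho(t)-\rho_m\|_{L^2}^2$. By the choice $N_0 \ge 2\|\rho_0-\rho_m\|_{L^2}^2$ in \eqref{N0choice1}, we have $\|\rho(0)-\rho_m\|_{L^2}^2 \le N_0/2$, so Proposition \ref{prop:barrier} applies with $t_0 = 0$: the solution extends to $[0,T_*]$ with $\sup_{[0,T_*]}\|\rho(t)-\rho_m\|_{L^2}^2 \le N_0$, and there is a time $T^{(1)} \in [T_*/2, T_*]$ with $\|\rho(T^{(1)})-\rho_m\|_{L^2}^2 \le N_0/8 \le N_0/2$. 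Now I would re-apply Proposition \ref{prop:barrier} with $t_0 = T^{(1)}$, obtaining continuation to $[T^{(1)}, T^{(1)}+T_*]$, control $\sup \|\rho - \rho_m\|_{L^2}^2 \le N_0$ there, and a new time $T^{(2)} \in [T^{(1)}+T_*/2, T^{(1)}+T_*]$ with $\|\rho(T^{(2)})-\rho_m\|_{L^2}^2 \le N_0/8$. Iterating, one builds an increasing sequence $0 = T^{(0)} < T^{(1)} < T^{(2)} < \cdots$ with $T^{(k+1)} - T^{(k)} \ge T_*/2$, so $T^{(k)} \to \infty$, while on every interval $[T^{(k)}, T^{(k)}+T_*]$ (which together cover $[0,\infty)$) one has the uniform bound $\|\rho(t)-\rho_m\|_{L^2}^2 \le N_0$. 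In particular the solution is defined and regular on all of $[0,\infty)$ — but to make this rigorous against the possibility that $\calT_{\max} < \infty$, I would argue by contradiction: if $\calT_{\max} < \infty$, pick $k$ with $T^{(k)} > \calT_{\max}$; since the above construction extends the solution past $T^{(k)}$ with uniform $L^2$ control, in particular $\int_0^{\calT_{\max}} \|\rho(t)-\rho_m\|_{L^2}^2\,dt \le N_0 \calT_{\max} < \infty$, contradicting Theorem \ref{thm:criteria}. Hence $\calT_{\max} = \infty$.

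The main (mild) obstacle is bookkeeping the hypothesis of Proposition \ref{prop:barrier}: it requires $\|\rho(t_0)-\rho_m\|_{L^2}^2 \le N_0/2$ at the start of each step, and one must verify that the endpoint estimate \eqref{est:barrier2} ($\le N_0/8$) indeed feeds the next application — this is immediate since $N_0/8 \le N_0/2$. One should also check that $N_0$, $T_*$, $g_0$ can be chosen \emph{once and for all} at the outset rather than re-chosen at each step; this holds because Proposition \ref{prop:barrier} allows $t_0$ to be any time inside the lifespan with the same constants, and the constants depend on $\rho_0$ only through $\rho_m$ (conserved, by Proposition \ref{lem:mass}) and $\|\rho_0-\rho_m\|_{L^2}$, the latter only entering via the lower bound on $N_0$, which is not violated along the evolution since $\|\rho(T^{(k)})-\rho_m\|_{L^2}^2 \le N_0/8 \le \|\rho_0-\rho_m\|_{L^2}^2$ is not needed — only $N_0 \ge 2\|\rho_0-\rho_m\|_{L^2}^2$ at $t=0$ is needed to start. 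Everything else is a routine concatenation argument.
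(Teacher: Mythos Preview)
Your proof is correct and follows essentially the same strategy as the paper: iterate Proposition \ref{prop:barrier} to obtain a uniform-in-time bound on $\|\rho(t)-\rho_m\|_{L^2}^2$, then invoke the regularity criterion of Theorem \ref{thm:criteria}. The only cosmetic difference is that the paper waits for the first hitting time of a fixed level before triggering the iteration, whereas you start immediately at $t=0$ using $\|\rho_0-\rho_m\|_{L^2}^2 \le N_0/2$; your version is if anything slightly cleaner in checking the hypothesis $\|\rho(t_0)-\rho_m\|_{L^2}^2 \le N_0/2$ of Proposition \ref{prop:barrier} at each step.
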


\begin{proof}
    Let $g_0, N_0$ be chosen as in Proposition \ref{prop:barrier} (and therefore $T_*$ is fixed). Let $t_0 \ge 0$ be the first time such that $\|\rho - \rho_m\|_{L^2}^2 = N_0$. If $t_0 = \infty$, then this means that $\|\rho(t) - \rho_m\|_{L^2}^2 \le N_0$ for all $t \ge 0$ as $N_0 \ge \|\rho_0 - \rho_m\|_{L^2}^2$. Hence by the regularity criterion stated in Theorem \ref{thm:criteria}, we are already done. We may then suppose that $t_0 < \infty$. By Proposition \ref{prop:barrier}, there exists $T_1 \in [t_0 + T_*/2, t_0 + T_*]$ such that $\|\rho(t_0 + T_1) - \rho_m\|_{L^2}^2 = N_0/8 < N_0$. Moreover, we know that $\sup_{0 \le t \le t_0 + T_*}\|\rho(t) - \rho_m\|_{L^2}^2 \le 2N_0$. Then let $t_1$ to be the first time after $t_0 + T_1$ such that $\|\rho - \rho_m\|_{L^2}^2 = N_0$. Repeating the argument above, there exists $T_2 \in [t_1 + T_*/2, t_1 + T_*]$ such that $\|\rho(t_1 + T_2) - \rho_m\|_{L^2}^2 = N_0/8$ and $\sup_{t_0 + T_1 \le t \le t_1 + T_*}\|\rho(t) - \rho_m\|_{L^2}^2 \le 2N_0$. Repeating such steps indefinitely and using $T_* > 0$:
    $$
    \sup_{t \ge 0}\|\rho(t) - \rho_m\|_{L^2}^2 \le 2N_0 < \infty,
    $$
    which implies global well-posedness by the $L^2$ criterion in Theorem \ref{thm:criteria}.
\end{proof}

\section{Analysis of the Full Keller-Segel-Navier-Stokes Problem}\label{sec:stokes}
\label{sect:stokes}
In this section, we study the full Keller-Segel-Navier-Stokes system \eqref{eq:ksgen2}, and we prove Proposition \ref{prop:bootstrap}. For the readers' convenience, let us recall the system \eqref{eq:ksgen2} here:
\begin{subequations}
    \label{eq:ksstokes}
    \begin{equation}
    \label{eq:ksstokesden}
        \p_t \rho + u\cdot \nabla \rho -\Delta \rho + \divv(\rho\nabla(-\Delta_N)^{-1}(\rho- \rho_m)) = 0,
    \end{equation}
    \begin{equation}
    \label{eq:ksstokesvel}
        \p_t u + u\cdot \nabla u - B\Delta u + \nabla p = Bg\rho (0,1)^T,\quad \divv u = 0,
    \end{equation}
    \begin{equation}
        \p_2 \rho\big|_{\p \Omega} = 0,\; u_2|_{\p \Omega} = 0,\; \p_2u_1|_{\p \Omega} = 0,
    \end{equation}
    \begin{equation}
    \rho(0,x) = \rho_0(x) \ge 0,\; u(0,x) = u_0(x),
    \end{equation}
\end{subequations}

As pointed out in Section \ref{subsect:strat}, we will prove Proposition \ref{prop:bootstrap} with bootstrap assumption \eqref{bootstrap}. {We will do so by appropriately setting up a comparison scheme between the full problem \eqref{eq:ksstokes} and the static problem \eqref{eq:ksstatstokes}. By examining Proposition \ref{prop:barrier} more closely, we observe that the damping effect induced by the static problem \eqref{eq:ksstatstokes} becomes significant only when the $L^2$ norm of the cell density becomes sufficiently large i.e. $N_0$ as specified in the statement of Proposition \ref{prop:barrier}. Therefore, in the case of the full problem \eqref{eq:ksstokes}, we decompose the full solution $(\rho,u)$ into the static problem dynamics $(\rho_s, u_s)$ and the remainder term $(r,v)$ as $L^2$ norm of $\rho$ turns to the level of $N_0$. We then show suitable estimates for remainder term $(r,v)$, and show that they can be dominated by the damping effect introduced by the static part.} 

\red{From now, given initial data $(\rho_0, u_0)$, we fix $N_0$ and $T_*$ as in Proposition \ref{prop:barrier}. Namely, we choose $N_0$ so large that \eqref{N0choice1} and \eqref{N0choice2} hold, and then set $T_* = 2c_0N_0^{-1}$. Moreover, the thresholds $g_1, B_1 \ge 1$ are chosen to be sufficiently large in the course of our proof, whose exact values will be summarized in Remark \ref{rmk:stokeschoice}. For the rest of this section, we assume $g \ge g_1$ and $B \ge B_1$ by default.}

We organize this section in the following way: in Section \ref{subsect:tech1}, we prove preliminary estimates of various derivatives to both $\rho$ and $u$ given bootstrap assumption \eqref{bootstrap}. In Section \ref{sec:adv} and Section \ref{sec:dtu}, we prove suitable estimates for $u\cdot\nabla u$ and $\p_t u$ respectively, which are crucial in closing the bootstrap argument. In Section \ref{subsect:closing1}, we set up a comparison scheme in between the full problem \eqref{eq:ksstokes} and the static problem \eqref{eq:ksstatstokes} so as to close the bootstrap argument.

\subsection{Preliminary Estimates}\label{subsect:tech1}
\red{For the rest of this section, we will consider a solution $(\rho,u)$ to system \eqref{eq:ksgen2} with initial data $(\rho_0, u_0)$.} We begin with several preliminary estimates for $\rho$, $u$, and some of their derivatives. Such estimates will be instrumental in later estimates of $\p_t u$ and $u \cdot \nabla u$ under the bootstrap assumption. First, we introduce a time-integrated bound for $\nabla \rho$ on any time interval of scale $T_*$.
\begin{lem}
    \label{lem:dxrho}
    For arbitrary $t_0 \in [0, \calT_0]$ and given bootstrap assumption \eqref{bootstrap}, we have
    \begin{equation}
        \label{est:dxrho}
        \int_{t_0}^{t_0 + T_*/9} \|\nabla\rho\|_{L^2}^2 dt \le 18N_0.
    \end{equation}
    \red{Here, we recall that $[0,\calT_0]$ is the bootstrap horizon.}
\end{lem}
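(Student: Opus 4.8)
The plan is to read off \eqref{est:dxrho} from the elementary $L^2$ energy balance \eqref{est:naive0} of Proposition \ref{lem:naive}. Note first that \eqref{est:naive0} applies to the full system \eqref{eq:ksgen2}: its proof uses only that the advecting field is smooth, divergence free, and satisfies the no-flux condition, all of which hold for the viscous velocity $u$ subject to the Lions boundary condition. Writing $X(t):=\|\rho(t)-\rho_m\|_{L^2}^2$, I would integrate
\[
\frac{d}{dt}X + \|\nabla\rho\|_{L^2}^2 \le C_0 X^2 + 2\rho_m X
\]
over $[t_0,t_0+T_*/9]$ and discard the nonnegative endpoint term $X(t_0+T_*/9)$, obtaining
\[
\int_{t_0}^{t_0+T_*/9}\|\nabla\rho\|_{L^2}^2\,dt \le X(t_0) + \int_{t_0}^{t_0+T_*/9}\big(C_0 X^2 + 2\rho_m X\big)\,dt .
\]

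Next I would invoke the bootstrap hypothesis \eqref{bootstrap}: since $[t_0,t_0+T_*/9]$ lies in (or, up to a harmless continuity argument at the right endpoint, essentially in) the bootstrap horizon $[0,\calT_0]$, we have $X(t)\le 9N_0$ there, so
\[
\int_{t_0}^{t_0+T_*/9}\|\nabla\rho\|_{L^2}^2\,dt \le 9N_0 + \frac{T_*}{9}\big(81 C_0 N_0^2 + 18\rho_m N_0\big).
\]
It then remains to substitute the explicit constants $T_* = 2c_0 N_0^{-1}$ and $c_0 = \frac{1}{8(1+C_0)}$ (from Proposition \ref{prop:barrier} and Corollary \ref{cor:chartime}), together with the normalization $N_0\ge 2\rho_m$ from \eqref{N0choice1}. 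A short computation gives $\frac{T_*}{9}\cdot 81 C_0 N_0^2 = \frac{9C_0}{4(1+C_0)}N_0 < \frac{9}{4}N_0$ and $\frac{T_*}{9}\cdot 18\rho_m N_0 = \frac{\rho_m}{2(1+C_0)} < \frac{\rho_m}{2}\le \frac{N_0}{4}$, whence the right-hand side is at most $9N_0 + \frac{9}{4}N_0 + \frac{1}{4}N_0 = \frac{23}{2}N_0 \le 18N_0$, which is \eqref{est:dxrho}.

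There is no genuine obstacle here; this is the one-line energy estimate localized to the short time window dictated by $T_*$, and in fact it proves the bound with room to spare. The only points needing a little attention are the bookkeeping of the universal constants $c_0$ and $C_0$ so that the arithmetic stays comfortably below the prescribed $18N_0$, and the minor issue of handling the endpoint $t_0+T_*/9$ when it sits at or slightly beyond $\calT_0$ — which is resolved either by restricting the integral to $[t_0,\min(t_0+T_*/9,\calT_0)]$, or by the standard observation that the local solution and the bound $X\le 9N_0$ persist for a short time past $\calT_0$.
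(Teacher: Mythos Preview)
Your approach is the same as the paper's: the paper's proof is a single line invoking Corollary~\ref{cor:chartime} with $N=9N_0$, which amounts to exactly the integration of \eqref{est:naive0} you carry out. Your arithmetic in the main case is correct and even leaves room to spare.

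There is one small slip in your endpoint discussion. Since $\calT_0$ is by definition the \emph{largest} time with $\sup_{[0,\calT_0]}\|\rho-\rho_m\|_{L^2}^2\le 9N_0$, the bound $X\le 9N_0$ does \emph{not} persist past $\calT_0$ when $\calT_0<\infty$; by continuity $X(\calT_0)=9N_0$ and $X$ exceeds $9N_0$ immediately afterward. The clean fix is a barrier argument in the spirit of the proof of \eqref{est:naiveL21}: starting from $X(t_0)\le 9N_0$ one checks directly from \eqref{est:naive0} that $X$ cannot reach $18N_0$ on $[t_0,t_0+T_*/9]$, so $X\le 18N_0$ on that whole interval. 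Redoing your integration with $X\le 18N_0$ in the right-hand side (but still $X(t_0)\le 9N_0$ for the initial term) gives
\[
\int_{t_0}^{t_0+T_*/9}\|\nabla\rho\|_{L^2}^2\,dt \le 9N_0 + \frac{T_*}{9}\big(324\,C_0 N_0^2 + 36\rho_m N_0\big) = 9N_0 + \frac{9C_0}{1+C_0}N_0 + \frac{\rho_m}{1+C_0} \le 18N_0,
\]
using $\rho_m\le N_0/2$. So the result stands with the stated constant.
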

\begin{proof}
    This lemma follows directly from applying Corollary \ref{cor:chartime} with $M = 9N_0$.
\end{proof}

We then give an estimate of $u$ that is uniform over the time interval $[0,\calT_0+T_*/9]$, which is slightly longer than the bootstrap horizon. In the mean time, we also show a bound on $\nabla u$ on an interval of scale $T_*$.
\begin{lem}
    \label{lem:supu}
    Given bootstrap assumption \eqref{bootstrap}, the following estimate holds:
    \begin{equation}
        \label{est:supu}
        \sup_{0\le t \le \calT_0+T_*/9}\|u(t)\|_{L^2}^2 \le C(\rho_m, \|u_0\|_{L^2})g^2N_0.
    \end{equation}
    Moreover, for arbitrary $t_0 \in [0, \calT_0]$, we have the following gradient bound:
    \begin{equation}
        \label{est:dxu}
        \int_{t_0}^{t_0 + T_*/9}\|\nabla u(s)\|_{L^2}^2 ds \le C(\rho_m, \|u_0\|_{L^2})g^2
    \end{equation}
\end{lem}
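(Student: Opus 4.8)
The plan is to obtain the $L^2$ bound on $u$ by a standard energy estimate on the Navier--Stokes equation \eqref{eq:ksstokesvel}, exploiting the viscous dissipation to absorb the buoyancy forcing. First I would test \eqref{eq:ksstokesvel} against $u$ and use the incompressibility and boundary conditions \eqref{eq:lions} to kill the pressure and advection terms, obtaining
\begin{equation*}
\frac{1}{2}\frac{d}{dt}\|u\|_{L^2}^2 + B\|\nabla u\|_{L^2}^2 = Bg\int_\Omega \rho\, u_2\, dx \le Bg\|\rho - \rho_m\|_{L^2}\|u\|_{L^2} + Bg\rho_m\sqrt{|\Omega|}\,\|u\|_{L^2}.
\end{equation*}
Here I would note that $\int_\Omega u_2\,dx = 0$ is not needed; rather one bounds $\int \rho u_2 = \int(\rho-\rho_m)u_2 + \rho_m\int u_2$, and $\int_\Omega u_2\,dx = 0$ does hold because $u_2$ has zero trace and $u$ is divergence-free, so in fact only the $\|\rho-\rho_m\|_{L^2}$ term survives. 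Using the Poincar\'e inequality from Proposition \ref{prop:Poincare}, $\|u\|_{L^2} \le C_P\|\nabla u\|_{L^2}$, and the bootstrap assumption \eqref{bootstrap} which gives $\|\rho - \rho_m\|_{L^2}^2 \le 9N_0$ on $[0,\calT_0]$, I would apply Young's inequality to absorb $\|u\|_{L^2}$ (or $\|\nabla u\|_{L^2}$) into the dissipation term, arriving at a differential inequality of the form
\begin{equation*}
\frac{d}{dt}\|u\|_{L^2}^2 + \frac{B}{2C_P^2}\|u\|_{L^2}^2 \le C(\rho_m) B g^2 N_0.
\end{equation*}
Gr\"onwall's inequality then yields $\sup_{t}\|u(t)\|_{L^2}^2 \le \|u_0\|_{L^2}^2 + C(\rho_m) C_P^2 g^2 N_0 \le C(\rho_m, \|u_0\|_{L^2}) g^2 N_0$, where the exponential factor is bounded by $1$ and is harmless. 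The one subtlety is that the bootstrap assumption only holds on $[0,\calT_0]$, whereas \eqref{est:supu} is claimed on the slightly longer interval $[0,\calT_0 + T_*/9]$; I would handle this by noting that by local well-posedness the solution extends past $\calT_0$, and on $[\calT_0, \calT_0 + T_*/9]$ one still has an $L^2$ bound on $\rho - \rho_m$ of size $O(N_0)$ by Corollary \ref{cor:chartime} applied with $N = 9N_0$ starting from $t_0 = \calT_0$ (this is exactly what underlies Lemma \ref{lem:dxrho}), so the same Gr\"onwall estimate runs on the extended interval with the same type of bound.

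For the gradient bound \eqref{est:dxu}, I would integrate the energy identity above over $[t_0, t_0 + T_*/9]$ rather than discarding the dissipation term:
\begin{equation*}
B\int_{t_0}^{t_0+T_*/9}\|\nabla u(s)\|_{L^2}^2\, ds \le \frac{1}{2}\|u(t_0)\|_{L^2}^2 + Bg\int_{t_0}^{t_0+T_*/9}\|\rho - \rho_m\|_{L^2}\|u\|_{L^2}\, ds.
\end{equation*}
Bounding the right-hand side using \eqref{est:supu}, the bootstrap assumption, and $T_* = 2c_0 N_0^{-1}$ (so the time integral contributes a factor $O(N_0^{-1})$), the first term is $O(g^2 N_0)$ and the integral term is $\lesssim B g \cdot N_0^{-1} \cdot \sqrt{N_0}\cdot \sqrt{g^2 N_0} = B g^2$, up to constants depending on $\rho_m$. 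Dividing through by $B$ gives $\int_{t_0}^{t_0+T_*/9}\|\nabla u\|_{L^2}^2\,ds \le C(\rho_m, \|u_0\|_{L^2}) g^2$, as claimed; note the $B$ cancels, which is the point. I do not expect a serious obstacle here: the estimate is a routine parabolic energy estimate, and the only things to be careful about are (i) correctly tracking the powers of $g$ and the factor $N_0$, (ii) confirming the pressure and advection terms vanish under the Lions boundary conditions (for $u\cdot\nabla u$ this uses $\int_\Omega (u\cdot\nabla u)\cdot u = \frac12\int_\Omega u\cdot\nabla|u|^2 = -\frac12\int_\Omega |u|^2\divv u + \frac12\int_{\p\Omega}|u|^2 u\cdot n = 0$ since $\divv u = 0$ and $u_2|_{\p\Omega}=0$), and (iii) the mild extension of the time interval past $\calT_0$, which as noted above follows from Corollary \ref{cor:chartime}.
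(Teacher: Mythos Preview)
Your proposal is correct and follows essentially the same approach as the paper: test \eqref{eq:ksstokesvel} against $u$, use incompressibility and the Lions boundary conditions to kill the advection and pressure terms, apply Poincar\'e plus Young to get a linear differential inequality, and then Gr\"onwall/Duhamel for \eqref{est:supu}; for \eqref{est:dxu} integrate the same energy identity over $[t_0, t_0+T_*/9]$ and exploit $T_* \sim N_0^{-1}$. The paper uses $\|\rho\|_{L^2}$ rather than your slightly cleaner $\|\rho-\rho_m\|_{L^2}$ (via $\int_\Omega u_2 = 0$), and writes the Duhamel formula explicitly, but these are cosmetic differences; your handling of the time extension past $\calT_0$ via Corollary \ref{cor:chartime} is exactly what the paper does.
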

\begin{proof}
    Fix $t \in [0,\calT_0 + T_*/9]$. Testing \eqref{eq:ksstokesvel} by $u$ and using Cauchy-Schwarz inequality, we obtain that
    \begin{equation}\label{est:uaux1}
        \frac{1}{2}\frac{d}{dt}\|u\|_{L^2}^2 + \int_\Omega u_j\p_j u_i u_i dx + B\int_\Omega u \Delta u dx = Bg\int_\Omega \rho u_2 dx \le Bg \|\rho\|_{L^2}\|u\|_{L^2}.
    \end{equation}
    To treat the second and third term on LHS above, we use $\divv u = 0$ and $u_2|_{\p\Omega} = 0$ to deduce that
    $$
    \int_\Omega u_j\p_j u_i u_i dx = \frac{1}{2}\int_\Omega u_j\p_j |u|^2 dx = -\frac{1}{2}\int_\Omega \divv u |u|^2 dx + \int_\T u_2|u|^2\Big|_{x_2 = 0}^{x_2 = \pi} dx_1 = 0.
    $$
    Similarly,
    \begin{align*}
        \int_\Omega u\Delta u dx &= \int_\T\int_0^\pi u_j\p_i\p_i u_j dx_2 dx_1 = -\|\nabla u\|_{L^2}^2 + \int_{\T}u_j\p_2u_j\Big|_{x_2 = 0}^{x_2 = \pi} dx_1\\
        &= -\|\nabla u\|_{L^2}^2 +\int_{\T}(u_1\p_2u_1 + u_2\p_2u_2)\Big|_{x_2 = 0}^{x_2 = \pi} dx_1 = -\|\nabla u\|_{L^2}^2,
    \end{align*}
    where we used the Lions boundary condition in the final equality. Combining the two identities above with \eqref{est:uaux1}, we have
    \begin{equation} \label{est:uaux2}
        \frac{1}{2}\frac{d}{dt}\|u\|_{L^2}^2 + B\|\nabla u\|_{L^2}^2 \le Bg\|\rho\|_{L^2}\|u\|_{L^2}  
    \end{equation}
    
    On the other hand, we apply Poincar\'e inequality \eqref{est:Poincare} and apply Cauchy-Schwarz inequality to \eqref{est:uaux2}, which yields the following differential inequality:
    $$
    \frac{d}{dt}\|u\|_{L^2}^2 \le -\frac{B}{C}\|u\|_{L^2}^2 + 4CBg^2\|\rho\|_{L^2}^2.
    $$
    Further invoking Duhamel's formula, we deduce from the above estimate that:
    \begin{equation}
    \label{supu:aux1}
        \|u(t)\|_{L^2}^2 \le e^{-\frac{B}{C}t}\|u_0\|_{L^2}^2 + 4CBg^2 \int_0^t e^{-\frac{B}{C}(t-s)}\|\rho(s)\|_{L^2}^2 ds,
    \end{equation}
    for $t \in [0,\calT_0+T_*/9]$. To estimate \eqref{supu:aux1}, we first use bootstrap assumption \eqref{bootstrap} and Corollary \ref{cor:chartime} to conclude that 
    $$
    \sup_{0 \le t \le \calT_0 + T_*/9}\|\rho(t) - \rho_m\|_{L^2}^2 \le 18N_0.
    $$
    Then, we deduce from \eqref{supu:aux1} that:
    \begin{align*}
        \|u(t)\|_{L^2}^2 &\le \|u_0\|_{L^2}^2 + 4CBg^2(\sup_{0\le t \le \calT_0+T_*/9}\|\rho(t) - \rho_m\|_{L^2}^2 + 2\pi^2\rho_m^2)\int_0^t e^{-\frac{B}{C}(t-s)}ds\\
        &\le \|u_0\|_{L^2}^2 + 4CBg^2(18N_0 + 2\pi^2\rho_m^2)\cdot B^{-1}(1-e^{-\frac{Bt}{C}})\\
        &\le \|u_0\|_{L^2}^2 + C(\rho_m)g^2N_0 \le C(\rho_m, \|u_0\|_{L^2})g^2N_0
    \end{align*}
    where third inequality follows from $\rho_m^2 \le \frac{1}{2}\rho_mN_0$ due to our choice of $N_0 \ge 2\rho_m$. We also used $g, N_0 \ge 1$ in the final inequality. This concludes the proof of \eqref{est:supu}.

To show \eqref{est:dxu}, we start with \eqref{est:uaux2} to deduce that
    \begin{align*}
    \frac{1}{2}\frac{d}{dt}\|u\|_{L^2}^2 + B\|\nabla u\|_{L^2}^2 &\le C(\rho_m, \|u_0\|_{L^2})Bg(18N_0 + 2\pi^2\rho_m^2)^{1/2}gN_0^{1/2} \le C(\rho_m, \|u_0\|_{L^2})Bg^2N_0,
    \end{align*}
    where we invoked the bootstrap assumption and \eqref{est:supu} in the second inequality. Fixing $t_0 \in [0,\calT_0]$, and integrating the above from $t_0$ to $t_0 + T_*/9$, we obtain that
    \begin{align*}
        B\int_{t_0}^{t_0 + T_*/9}\|\nabla u(t)\|_{L^2}^2 dt &\le \|u_0\|_{L^2}^2 + C(\rho_m, \|u_0\|_{L^2})Bg^2N_0\cdot c_0N_0^{-1} \le C(\rho_m, \|u_0\|_{L^2})Bg^2,
    \end{align*}
    where the last inequality holds as $g, B \ge 1$. Now, the desired estimate follows from dividing on both sides of the above inequality by $B$.
\end{proof}

We conclude this subsection by an estimate on the time derivative of cell density $\rho$.
\begin{lem}
    \label{lem:dtrho}
    Assume the bootstrap assumption \eqref{bootstrap}. For any $t_0 \in [0,\calT_0]$, we have
    \begin{equation}
        \label{est:dtrho}
        \int_{t_0}^{t_0 + T_*/9} \|\p_t \rho\|_{H^{-1}_0}^2 dt \le C(\rho_m,\|u_0\|_{L^2})g^2N_0^2
    \end{equation}
\end{lem}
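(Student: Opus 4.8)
The plan is to read $\p_t\rho$ off the density equation \eqref{eq:ksstokesden} and estimate each summand directly in the dual norm $H^{-1}_0$. Using $\divv u = 0$ to rewrite the transport term as a divergence, we have
$$
\p_t\rho = -\divv(\rho u) + \Delta\rho - \divv\!\big(\rho\nabla(-\Delta_N)^{-1}(\rho-\rho_m)\big).
$$
The elementary observation that makes everything go through is that for any $F \in L^2(\Omega)^2$ one has $\|\divv F\|_{H^{-1}_0}\le C\|F\|_{L^2}$: pairing against a test function $\phi\in H^1_0(\Omega)$ and integrating by parts produces no boundary contribution, since $\phi$ vanishes on $\p\Omega$, so $\langle \divv F,\phi\rangle = -\int_\Omega F\cdot\nabla\phi$. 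Applying this with $F = \rho u$, with $F = \nabla\rho$ (so that $\divv F = \Delta\rho$), and with $F = \rho\nabla(-\Delta_N)^{-1}(\rho-\rho_m)$ reduces the claim to three $L^2$ bounds, all on the interval $[t_0, t_0 + T_*/9]$ with $t_0 \in [0,\calT_0]$.

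The key input is a pointwise-in-time $L^\infty$ bound on the density on the slightly enlarged interval $[0,\calT_0 + T_*/9]$. As already noted in the proof of Lemma \ref{lem:supu}, the bootstrap assumption \eqref{bootstrap} together with Corollary \ref{cor:chartime} gives $\sup_{0\le t\le \calT_0 + T_*/9}\|\rho(t) - \rho_m\|_{L^2}^2 \le 18N_0$. Hence Proposition \ref{lem:L2toLinfty}, applied with a fixed $N$ of order $\sqrt{N_0}$ (precisely $N = \max\{\sqrt{18N_0}/2,\rho_m,1\}$, which is admissible), yields $\|\rho(t) - \rho_m\|_{L^\infty}\le C(\rho_m)N_0$ on this interval; the $\rho_m^2$ coming out of the proposition is absorbed into $C(\rho_m)$ using $N_0\ge 1$. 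In particular $\|\rho(t)\|_{L^\infty}\le C(\rho_m)N_0$ there.

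With this, the three $L^2$ estimates are routine. For the transport term, $\|\rho u\|_{L^2}\le \|\rho\|_{L^\infty}\|u\|_{L^2}\le C(\rho_m)N_0\cdot C(\rho_m,\|u_0\|_{L^2})^{1/2}\,g N_0^{1/2}$ by Lemma \ref{lem:supu}, so $\|\divv(\rho u)\|_{H^{-1}_0}^2\le C(\rho_m,\|u_0\|_{L^2})\,g^2N_0^3$. For the aggregation term, the energy estimate for the Neumann problem (test $-\Delta\psi = \rho-\rho_m$ by $\psi$ and use the Poincar\'e inequality for mean-zero $\psi$) gives $\|\nabla(-\Delta_N)^{-1}(\rho-\rho_m)\|_{L^2}\le C\|\rho-\rho_m\|_{L^2}\le C\sqrt{N_0}$, hence $\|\rho\nabla(-\Delta_N)^{-1}(\rho-\rho_m)\|_{L^2}^2\le C(\rho_m)N_0^3$. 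For the diffusion term, $\|\Delta\rho\|_{H^{-1}_0}^2\le C\|\nabla\rho\|_{L^2}^2$. Integrating over $[t_0,t_0+T_*/9]$, the first two contributions gain a factor $|T_*/9|\lesssim N_0^{-1}$ and become $\le C(\rho_m,\|u_0\|_{L^2})\,g^2N_0^2$, while the diffusion contribution is $\le C\int_{t_0}^{t_0+T_*/9}\|\nabla\rho\|_{L^2}^2\,dt\le C N_0\le C g^2N_0^2$ by Lemma \ref{lem:dxrho} together with $g,N_0\ge 1$. Summing the three gives \eqref{est:dtrho}.

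I do not expect a genuine obstacle here; the only point needing care is the bookkeeping of the powers of $N_0$. It is essential that the time window has length of order $N_0^{-1}$, which is exactly what converts the pointwise bound $\|\divv(\rho u)\|_{H^{-1}_0}^2\lesssim g^2N_0^3$ into the time-integrated bound $g^2N_0^2$; and one must keep track that the $\rho_m$-dependent constants produced by Proposition \ref{lem:L2toLinfty} and by the Neumann energy estimate are harmless, being permitted inside $C(\rho_m,\|u_0\|_{L^2})$. Placing $\rho$ (rather than $u$) in $L^\infty$ is what lets us avoid needing any $L^\infty$ control of the velocity.
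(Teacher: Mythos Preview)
Your proof is correct and follows essentially the same approach as the paper: test the density equation against $H^1_0$ functions (equivalently, write each term as a divergence and use $\|\divv F\|_{H^{-1}_0}\lesssim\|F\|_{L^2}$), invoke Proposition~\ref{lem:L2toLinfty} for the $L^\infty$ bound on $\rho$, Lemma~\ref{lem:supu} for the $L^2$ bound on $u$, and Lemma~\ref{lem:dxrho} for the gradient term, then use $T_*\sim N_0^{-1}$ to close. The only cosmetic difference is that the paper estimates the aggregation term via an $L^4\times L^4$ split and the Sobolev embedding $H^1\hookrightarrow L^4$, whereas you use the simpler $L^\infty\times L^2$ split together with the Neumann energy estimate; both yield bounds that are subsumed by $C(\rho_m,\|u_0\|_{L^2})g^2N_0^2$ after time integration.
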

\begin{proof}
    Picking arbitrary $\varphi \in H^1_0$ and using \eqref{eq:ksstokesden}, we have
    \begin{align*}
        \left|\int_\Omega \p_t \rho \varphi dx \right| & \le \left|\int_\Omega u\cdot \nabla \rho \varphi dx\right| + \left|\int_\Omega \Delta \rho \varphi dx\right| + \left|\int_\Omega \varphi \divv(\rho\nabla(-\Delta_N)^{-1}(\rho - \rho_m)) dx\right| =: \sum_{i = 1}^3 I_i.
    \end{align*}
    Using incompressibility of $u$ and integrating by parts, we first see that
    $$
    I_1 = \left|\int_\Omega u\cdot \nabla \varphi (\rho-\rho_m) dx\right| \le \|\rho - \rho_m\|_{L^\infty}\|u\|_{L^2}\|\nabla\varphi\|_{L^2},
    $$
    Applying integration by parts in a similar fashion and using Sobolev embeddings as well as H\"older's inequality, we also have
    \begin{align*}
        I_2 &= \left|\int_\Omega \nabla\rho \cdot \nabla \varphi dx\right| \le \|\nabla \rho\|_{L^2}\|\nabla\varphi\|_{L^2},\\
        I_3 &= \left|\int_\Omega \rho \nabla\varphi\cdot \nabla(-\Delta_N)^{-1}(\rho - \rho_m) dx\right| \le \|\rho\|_{L^4}\|\nabla(-\Delta_N)^{-1}(\rho - \rho_m)\|_{L^4}\|\nabla\varphi\|_{L^2}\\
        &\le \|\rho\|_{L^4}\|\rho - \rho_m\|_{L^2}\|\nabla\varphi\|_{L^2} \le \|\rho\|_{L^\infty}^{1/2}\|\rho - \rho_m\|_{L^2}^{3/2}\|\nabla\varphi\|_{L^2}.
    \end{align*}
    Combining estimates of $I_i$ above and using duality, we conclude that for any $t \in [t_0, t_0 + T_*/9]$:
    \begin{align*}
    \|\p_t \rho\|_{H^{-1}_0}^2 &\lesssim \|\rho - \rho_m\|_{L^\infty}^2\|u\|_{L^2}^2 +  \|\nabla \rho\|_{L^2}^2 + \|\rho\|_{L^\infty}\|\rho - \rho_m\|_{L^2}^{3}\\
    &\le C(\rho_m,\|u_0\|_{L^2})(g^2N_0^3 + N_0^{5/2}) + \|\nabla \rho\|_{L^2}^2\\
    &\le C(\rho_m,\|u_0\|_{L^2})g^2N_0^3+ \|\nabla \rho\|_{L^2}^2,
    \end{align*}
    where we used the bootstrap assumption \eqref{bootstrap}, Proposition \ref{lem:L2toLinfty}, and Lemma \ref{lem:supu} in the second inequality. Finally, we integrate over $[t_0, t_0 + T_*/9]$ and invoke Lemma \ref{lem:dxrho} to conclude that
    $$
    \int_{t_0}^{t_0 + T_*/9}\|\p_t \rho\|_{H^{-1}_0}^2 dt \le \frac{c_0N_0^{-1}}{9}\cdot C(\rho_m,\|u_0\|_{L^2})g^2N_0^3 + 18N_0 \le C(\rho_m,\|u_0\|_{L^2})g^2N_0^2
    $$
    since $g \ge 1$ and $N_0 \ge 1$.
\end{proof}

\subsection{Control of the Advection Term}\label{sec:adv}
In this subsection, we estimate the advection term $u\cdot \nabla u$ on any time interval of length $T$ in the bootstrap horizon.

\begin{lem}
    \label{lem:advection}
    Assume the bootstrap assumption \eqref{bootstrap} holds and fix \red{arbitrary} $T \in (0,\calT_0 + T_*/9)$. There exists a constant $\calC_0 = \calC_0(\rho_m, \|u_0\|_{L^2})$ such that for $B \ge \calC_0gN_0^{1/2}$, the following estimate holds:
    \begin{equation}
        \label{est:advection}
        \int_{t_0}^{t_0 + T}\|u\cdot\nabla u (t)\|_{L^2}^2 dt \le C(\rho_m, \|u_0\|_1)g^2N_0(g^2N_0T + 1),
    \end{equation}
    for any $t_0 \in [0, \calT_0 + T_*/9 - T]$.
\end{lem}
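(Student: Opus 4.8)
The plan is to estimate $\|u\cdot\nabla u\|_{L^2}$ by interpolating between the $L^2$ and $H^2$ norms of $u$, converting time integrals of $\|u\|_{H^2}^2$ into something controllable. The natural starting point is the pointwise bound
\[
\|u\cdot\nabla u\|_{L^2} \le \|u\|_{L^\infty}\|\nabla u\|_{L^2} \lesssim \|u\|_{L^2}^{1/2}\|u\|_{H^2}^{1/2}\|\nabla u\|_{L^2},
\]
using the 2D Agmon/Ladyzhenskaya-type inequality $\|u\|_{L^\infty}^2 \lesssim \|u\|_{L^2}\|u\|_{H^2}$ valid for $u$ satisfying the boundary conditions in $W$. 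Squaring and integrating in time, and using the uniform bound $\|u(t)\|_{L^2}^2 \le C(\rho_m,\|u_0\|_{L^2})g^2 N_0$ from Lemma \ref{lem:supu}, I get
\[
\int_{t_0}^{t_0+T}\|u\cdot\nabla u\|_{L^2}^2\,dt \lesssim \big(\sup_t\|u\|_{L^2}\big)\int_{t_0}^{t_0+T}\|u\|_{H^2}\|\nabla u\|_{L^2}^2\,dt.
\]
So the crux is a time-integrated bound on $\|u\|_{H^2}$ weighted by $\|\nabla u\|_{L^2}^2$; by Proposition \ref{prop:statstokes} applied to the \emph{elliptic} structure of the Navier-Stokes momentum equation (rewriting $-\Delta u + \nabla(p/B) = g\rho(0,1)^T - B^{-1}(\p_t u + u\cdot\nabla u)$), we have $\|u\|_{H^2} \lesssim g\|\rho\|_{L^2} + B^{-1}\|\p_t u\|_{L^2} + B^{-1}\|u\cdot\nabla u\|_{L^2}$.

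Substituting this expression for $\|u\|_{H^2}$ produces three pieces. The first, $g\|\rho\|_{L^2}\int\|\nabla u\|_{L^2}^2\,dt$, is immediately handled by the bootstrap bound $\|\rho\|_{L^2}^2 \lesssim N_0$ and Lemma \ref{lem:supu}'s gradient estimate $\int_{t_0}^{t_0+T_*/9}\|\nabla u\|_{L^2}^2 \lesssim C(\rho_m,\|u_0\|_{L^2})g^2$, after splitting $[t_0,t_0+T]$ into $\lesssim 9T/T_* + 1 \lesssim N_0 T + 1$ subintervals of length $T_*/9$; this contributes the $g^2 N_0(g^2 N_0 T + 1)$-type term. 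The term involving $B^{-1}\|u\cdot\nabla u\|_{L^2}$ is the dangerous self-referential one: it gives $B^{-1}\sup_t\|\nabla u\|_{L^2}^{?}\cdots$ — more carefully, $B^{-1}\int\|u\cdot\nabla u\|_{L^2}\|\nabla u\|_{L^2}^2\,dt \le B^{-1}(\sup\|\nabla u\|_{L^2}^2)\int\|u\cdot\nabla u\|_{L^2}\,dt$, which I would bound by $\tfrac12$ of the left-hand side plus a remainder, \emph{provided} $B$ is large enough relative to $g$ and $N_0$ — this is exactly where the hypothesis $B \ge \calC_0 g N_0^{1/2}$ enters, letting me absorb this term. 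The $B^{-1}\|\p_t u\|_{L^2}$ term is the main obstacle: I need a time-integrated bound on $\|\p_t u\|_{L^2}$, which is the subject of the next subsection (Section \ref{sec:dtu}); I would either invoke that estimate directly or, if it is logically downstream, close a combined bootstrap on $\int(\|u\cdot\nabla u\|_{L^2}^2 + \|\p_t u\|_{L^2}^2)$ simultaneously. Assuming the $\p_t u$ estimate is available with a bound of the form $\int_{t_0}^{t_0+T_*/9}\|\p_t u\|_{L^2}^2\,dt \lesssim C(\rho_m,\|u_0\|_1)B^2 g^2 N_0(\cdots)$ (the natural scaling, since $\p_t u$ balances $B\Delta u \sim Bg\rho$), the $B^{-2}$ prefactor exactly tames it.

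The bookkeeping obstacle throughout is tracking how constants depend on $g$, $N_0$, $B$ and ensuring the final bound has precisely the claimed form $C(\rho_m,\|u_0\|_1)g^2 N_0(g^2 N_0 T + 1)$ rather than something with extra powers of $N_0$ or $g$; this forces careful use of $N_0 \ge 1$, $g,B\ge 1$, and the interval-splitting count $\lceil 9T/T_*\rceil \le 9N_0 T/c_0 + 1$. I expect the genuinely hard step to be the interplay with the $\|\p_t u\|_{L^2}$ estimate — specifically making sure the constant $\calC_0$ in the threshold $B \ge \calC_0 g N_0^{1/2}$ is chosen \emph{after} $N_0$ is fixed but can be made to work uniformly, and that no circularity arises between this lemma and the $\p_t u$ lemma. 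Everything else is interpolation, elliptic regularity from Proposition \ref{prop:statstokes}, and the a priori bounds already collected in Lemmas \ref{lem:dxrho}--\ref{lem:dtrho}.
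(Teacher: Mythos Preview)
Your approach has a genuine gap: by rewriting the Navier--Stokes equation as a perturbed Stokes problem and invoking Proposition \ref{prop:statstokes} to bound $\|u\|_{H^2}$, you introduce a source term $B^{-1}\|\p_t u\|_{L^2}$ that you cannot control at this stage. In the paper, the $\p_t u$ estimates (Lemmas \ref{lem:dtu0} and \ref{lem:dtu}) are \emph{downstream} of Lemma \ref{lem:advection} --- they explicitly invoke it --- so your proposal creates a circular dependence. You flag this yourself (``if it is logically downstream, close a combined bootstrap''), but you do not carry it out, and the combined argument would be substantially more delicate than what is needed here. A second, related issue: your absorption of the self-referential $B^{-1}\|u\cdot\nabla u\|_{L^2}$ term relies on $\sup_t\|\nabla u\|_{L^2}^2$, which is not yet available (Lemma \ref{lem:supu} only controls the time integral over intervals of length $T_*/9$).

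The paper avoids all of this by exploiting the \emph{parabolic} rather than elliptic structure. Testing \eqref{eq:ksstokesvel} by $-\Delta u$ turns the time-derivative contribution into an exact derivative, $\int \p_t u\cdot(-\Delta u)\,dx = \tfrac12\tfrac{d}{dt}\|\nabla u\|_{L^2}^2$, so $\p_t u$ never appears as a source. The nonlinear term $\int(\Delta u)(u\cdot\nabla u)\,dx$ is bounded by $\|\Delta u\|_{L^2}\|u\cdot\nabla u\|_{L^2} \lesssim \|u\|_{L^2}\|\Delta u\|_{L^2}^2$ (using $\|u\cdot\nabla u\|_{L^2} \lesssim \|u\|_{L^2}\|\Delta u\|_{L^2}$, which follows from Ladyzhenskaya plus $\|\nabla u\|_{L^2}^2 \le \|u\|_{L^2}\|\Delta u\|_{L^2}$). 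Since $\|u\|_{L^2} \le C(\rho_m,\|u_0\|_{L^2})gN_0^{1/2}$ by Lemma \ref{lem:supu}, choosing $B \ge \calC_0 gN_0^{1/2}$ absorbs this directly into the dissipation, yielding $\int_{t_0}^{t_0+T}\|\Delta u\|_{L^2}^2\,dt \le C(g^2N_0T+1)$ in one step. The key idea you are missing is this energy-method shortcut: test by $-\Delta u$ rather than pass through the elliptic estimate.
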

\begin{proof}
    First using H\"older inequality and Ladyzhenskaya's inequality in 2D,  we observe that 
    \begin{align}
        \|u\cdot\nabla u\|_{L^2} &\le \|u\|_{L^4}\|\nabla u\|_{L^4} \lesssim \|u\|_{L^2}^{1/2}\|u\|_{1}\|\nabla u\|_{1}^{1/2}\notag\\
        &\le \|u\|_{L^2}^{1/2}(\|u\|_{L^2} + \|\nabla u\|_{L^2})(\|\nabla u\|_{L^2}^{1/2} + \|\nabla^2 u\|_{L^2}^{1/2})\notag\\
        &= \|u\|_{L^2}^{3/2}\|\nabla u\|_{L^2}^{1/2} + \|u\|_{L^2}^{3/2}\|\nabla^2 u\|_{L^2}^{1/2} + \|u\|_{L^2}^{1/2}\|\nabla u\|_{L^2}^{3/2} + \|u\|_{L^2}^{1/2}\|\nabla u\|_{L^2}\|\nabla^2 u\|_{L^2}^{1/2}\notag
    \end{align}
    We show how to estimate the most singular term $\|u\|_{L^2}^{1/2}\|\nabla u\|_{L^2}\|\nabla^2 u\|_{L^2}^{1/2}$, and the rest of the terms follow from a similar argument. Using the interpolation inequality $\|\nabla u\|_{L^2}^2 \le \|u\|_{L^2}\|\Delta u\|_{L^2}$ for $u \in W$, and the calculus lemma \ref{lem:ibp}, we have
    \begin{equation}
    \|u\|_{L^2}\|\nabla u\|_{L^2}^2\|\nabla^2 u\|_{L^2} \le \|u\|_{L^2}^2\|\Delta u\|_{L^2}^2. \label{est:auxadvbd}
    \end{equation}
    \red{Integrating \eqref{est:auxadvbd} in time from $t_0$ to $t_0 + T$ and using \eqref{est:supu},} we have
    \begin{align*}
    \int_{t_0}^{t_0 + T}\|u\|_{L^2}^2\|\Delta u\|_{L^2}^2 dt &\le \sup_{0 \le t \le \calT_0 + T_*/9}\|u(t)\|_{L^2}^2\int_{t_0}^{t_0 + T}\|\Delta u(t)\|_{L^2}^2 dt \le C(\rho_m,\|u_0\|_{L^2})g^2N_0\int_{t_0}^{t_0 + T}\|\Delta u(t)\|_{L^2}^2 dt,
    \end{align*}
    where we used \eqref{est:supu} in the second inequality. Proceeding similarly with other terms, we can conclude the following bound:
    \begin{equation}\label{est:adv0aux11}
    \int_{t_0}^{t_0 + T}\|u\cdot \nabla u\|_{L^2}^2 dt \le C(\rho_m,\|u_0\|_{L^2})g^2N_0\int_{t_0}^{t_0 + T}\|\Delta u\|_{L^2}^2 dt.
    \end{equation}
    Thus to close the estimate, we need to obtain a control of $\Delta u$ over an interval of length $T$. Testing \eqref{eq:ksstokesvel} by $-\Delta u$, we obtain that for any $t \in (t_0, t_0+T)$:
    \begin{align}
        \frac{1}{2}&\frac{d}{dt}\|\nabla u\|_{L^2}^2 + B\|\Delta u\|_{L^2}^2 = \int_\Omega (\Delta u)(u\cdot \nabla u) dx + Bg\int_\Omega \rho\Delta u_2 dx \notag\\
        &\le \|\Delta u\|_{L^2}\|u\cdot \nabla u\|_{L^2} + \frac{B}{4}\|\Delta u\|_{L^2}^2 + Bg^2\|\rho\|_{L^2}^2 \le C\|u\|_{L^2}\|\Delta u\|_{L^2}^2 + \frac{B}{4}\|\Delta u\|_{L^2}^2 + Bg^2\|\rho\|_{L^2}^2 \notag\\
        &\le C(\rho_m,\|u_0\|_{L^2})gN_0^{1/2}\|\Delta u\|_{L^2}^2 + \frac{B}{4}\|\Delta u\|_{L^2}^2 + Bg^2\|\rho\|_{L^2}^2, \label{est:adv0aux1}
    \end{align}
    where we used \eqref{est:auxadvbd} in the second inequality and \eqref{est:supu} in the last inequality above. Choose \begin{equation}\label{b1choice1}
        B \ge 4C(\rho_m,\|u_0\|_{L^2})gN_0^{1/2} =: \calC_0gN_0^{1/2},
    \end{equation}
    \red{where constant $C(\rho_m,\|u_0\|_{L^2})$ is the one appearing the last inequality of \eqref{est:adv0aux1}. Plugging \eqref{b1choice1} into \eqref{est:adv0aux1} and rearranging, we have}
    $$
    \frac{d}{dt}\|\nabla u\|_{L^2}^2 + B\|\Delta u\|_{L^2}^2 \le 2Bg^2\|\rho\|_{L^2}^2 \le 2Bg^2(18N_0 + 2\pi^2\rho_m^2),
    $$
    for $t \in (t_0, t_0 + T)$.
    Here, we used the bootstrap assumption \eqref{bootstrap} and Corollary \ref{cor:chartime}. 
    
    Now integrating from $t_0$ to $t_0 + T$, we obtain that
    \begin{align*}
    B\int_{t_0}^{t_0 + T}\|\Delta u(t)\|_{L^2}^2 dt &\le \|\nabla u_0\|_{L^2}^2 + C(\rho_m)Bg^2N_0T \le C(\rho_m, \|u_0\|_1)(Bg^2N_0T + 1).
    \end{align*}
    After dividing by $B$,
    $$
    \int_{t_0}^{t_0 + T}\|\Delta u(t)\|_{L^2}^2 dt \le C(\rho_m, \|u_0\|_1)(g^2N_0T + 1),
    $$
    where we used $B \ge 1$ in the inequality. Combining with \eqref{est:adv0aux11}, we obtain the desired result.
\end{proof}

\subsection{Control of \texorpdfstring{$\p_t u$}{dtu}}\label{sec:dtu}
\label{subsect:dtu}
In this section, our ultimate goal is to show \red{both a pointwise-in-time and a} time-integrated bound on the time derivative $\p_t u$. As the first step, we show an estimate of $\p_t u$ near time zero, which is presented in Lemma \ref{lem:dtu0}. \red{The fundamental reason that we prove this lemma is rather technical: we do not assume appropriate compatibility condition on initial datum $u_0$. For such general datum, one generally cannot obtain a pointwise-in-time control of $\|\p_t u(0,\cdot)\|_{L^2}$ by naively taking the limit of $t \to 0$ using the equation. (We refer to \cite{evans2022partial} for a more detailed discussion on compatible data for parabolic problems.) While a control of $\p_t u$ exactly at time zero is not available in general, we may instead control $\|\p_t u\|_{L^2}$ at some later time due to parabolic smoothing. More precisely, we have:}

\begin{lem}
    \label{lem:dtu0}
    Define a time instance $s_0 := c_0\min(\|\rho_0 - \rho_m\|_{L^2}^{-2},\rho_m^{-1})$, where $c_0$ is fixed as in Corollary \ref{cor:chartime}. \red{Then for $B \ge g\max(\calC_0N_0^{1/2},N_0)$, where $\calC_0$ is defined in Lemma \ref{lem:advection},} we have:
    \begin{equation}
    \label{est:dtu0int}
    \int_0^{s_0}\|\p_t u(t)\|_{L^2}^2 dt \le C(\rho_m, \|u_0\|_1)B^2g^2.
    \end{equation}
    In particular, there exists $\tau_0 \in (0,s_0)$ such that
    \begin{equation}
        \label{est:dtu0}
        \|\p_t u(\tau_0)\|_{L^2}^2 \le \frac{C(\rho_m, \|u_0\|_1)B^2g^2}{s_0} =: \calC_1(\rho_m, \|\rho_0 - \rho_m\|_{L^2}, \|u_0\|_1)B^2g^2.
    \end{equation}
\end{lem}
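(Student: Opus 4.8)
The plan is to run the natural $\p_t u$ energy estimate for the Navier--Stokes component \eqref{eq:ksstokesvel}, integrate it over the short window $[0,s_0]$, and then extract a favorable time $\tau_0$ by averaging. Pairing \eqref{eq:ksstokesvel} with $\p_t u$ in $L^2(\Omega)$: the pressure term drops because $\divv \p_t u = 0$ and $(\p_t u)\cdot n = \p_t(u_2)|_{\p\Omega} = 0$; integrating the viscous term by parts produces $\tfrac{B}{2}\tfrac{d}{dt}\|\nabla u\|_{L^2}^2$, with the boundary integral $\int_{\p\Omega}\p_2 u_j\,\p_t u_j$ vanishing since $\p_2 u_1|_{\p\Omega}=0$ (Lions) in the tangential component and $\p_t u_2|_{\p\Omega}=0$ in the normal one. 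After Cauchy--Schwarz on the advection and buoyancy terms and absorbing half of $\|\p_t u\|_{L^2}^2$ on the left, this yields
\begin{equation*}
\tfrac{1}{2}\|\p_t u\|_{L^2}^2 + \tfrac{B}{2}\tfrac{d}{dt}\|\nabla u\|_{L^2}^2 \le \|u\cdot\nabla u\|_{L^2}^2 + CB^2g^2\|\rho\|_{L^2}^2 .
\end{equation*}

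Integrating this over $[0,s_0]$ and discarding the nonnegative term $\tfrac{B}{2}\|\nabla u(s_0)\|_{L^2}^2$ leaves $\tfrac{1}{2}\int_0^{s_0}\|\p_t u\|_{L^2}^2 \le \tfrac{B}{2}\|\nabla u_0\|_{L^2}^2 + \int_0^{s_0}\|u\cdot\nabla u\|_{L^2}^2 + CB^2g^2\int_0^{s_0}\|\rho\|_{L^2}^2$. The decisive observation is that $s_0$ equals $T_*(\|\rho_0-\rho_m\|_{L^2}^2,\rho_m)$ in the notation of Corollary \ref{cor:chartime}; that corollary, applied with $N=\|\rho_0-\rho_m\|_{L^2}^2$ and $t_0=0$ (its barrier argument together with the regularity criterion of Theorem \ref{thm:criteria} rules out a shorter lifespan), shows the solution persists on all of $[0,s_0]$ with $\sup_{[0,s_0]}\|\rho-\rho_m\|_{L^2}^2 \le 2\|\rho_0-\rho_m\|_{L^2}^2 \le N_0$, and in particular $s_0\le\calT_0$, so the preliminary estimates of this section and Lemma \ref{lem:advection} apply on $[0,s_0]$. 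Hence $\int_0^{s_0}\|\rho\|_{L^2}^2 \le C(\rho_m)N_0 s_0 \le C(\rho_m,\|\rho_0-\rho_m\|_{L^2})$ since $s_0\le c_0\rho_m^{-1}$; the term $\tfrac{B}{2}\|\nabla u_0\|_{L^2}^2 \le C(\|u_0\|_1)B^2g^2$ trivially as $B,g\ge 1$; and Lemma \ref{lem:advection} with $t_0=0$, $T=s_0$ (legitimate exactly because $B\ge\calC_0 g N_0^{1/2}$) gives $\int_0^{s_0}\|u\cdot\nabla u\|_{L^2}^2 \le C(\rho_m,\|u_0\|_1)g^2N_0(g^2N_0 s_0+1)\le C(\rho_m,\|u_0\|_1,\|\rho_0-\rho_m\|_{L^2})\,g^4N_0$. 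Finally $g^4N_0\le B^2g^2$ by the remaining half of the hypothesis $B\ge gN_0$ (using $N_0\ge 1$), and collecting the three bounds gives \eqref{est:dtu0int}.

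For \eqref{est:dtu0}, the map $t\mapsto\|\p_t u(t)\|_{L^2}^2$ is integrable on $(0,s_0)$ with integral at most $C(\rho_m,\|\rho_0-\rho_m\|_{L^2},\|u_0\|_1)B^2g^2$, so its mean over $(0,s_0)$ is bounded by $C(\cdots)B^2g^2/s_0$; since $u$ is smooth for positive times one may pick $\tau_0\in(0,s_0)$ where the pointwise value does not exceed this mean. Setting $\calC_1:=C/s_0$, which depends only on $\rho_m$, $\|\rho_0-\rho_m\|_{L^2}$ and $\|u_0\|_1$ because $s_0$ does, finishes the proof.

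I expect the main obstacle to be bookkeeping rather than any single estimate: one must first certify that $[0,s_0]$ lies inside the lifespan and inside the bootstrap horizon $[0,\calT_0]$ before invoking Lemma \ref{lem:advection} and the $u$-bounds of this section, and then track the powers of $B$, $g$, $N_0$ so that everything collapses to $B^2g^2$. The hypothesis $B\ge g\max(\calC_0 N_0^{1/2},N_0)$ is tuned precisely for this: the factor $\calC_0 N_0^{1/2}$ legitimizes the $(-\Delta u)$-test underlying Lemma \ref{lem:advection}, while the factor $N_0$ is what lets the advection contribution $g^4N_0$ be dominated by the target $B^2g^2$.
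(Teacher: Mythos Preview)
Your proof is correct and follows essentially the same route as the paper: pair \eqref{eq:ksstokesvel} with $\p_t u$, use the Lions boundary conditions to identify the viscous term with $\tfrac{B}{2}\tfrac{d}{dt}\|\nabla u\|_{L^2}^2$, integrate over $[0,s_0]$, feed in Lemma~\ref{lem:advection} and the $\rho$-bound from Corollary~\ref{cor:chartime}, and extract $\tau_0$ by the mean-value argument. The only cosmetic difference is that you let the constant in \eqref{est:dtu0int} depend on $\|\rho_0-\rho_m\|_{L^2}$ (via your bound $N_0 s_0\le C(\rho_m,\|\rho_0-\rho_m\|_{L^2})$), whereas the paper uses the sharper observation $\|\rho_0-\rho_m\|_{L^2}^2\, s_0\le c_0$ directly to keep the constant as $C(\rho_m,\|u_0\|_1)$; this is harmless for the rest of the argument since $\calC_1$ depends on $\|\rho_0-\rho_m\|_{L^2}$ through $s_0$ anyway.
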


\begin{proof}
    Note that given \eqref{est:dtu0int}, \eqref{est:dtu0} directly follows from Chebyshev's inequality. Hence, we focus on showing \eqref{est:dtu0int}. Multiplying $\p_t u$ on both sides of \eqref{eq:ksstokesvel} and integrating in space, we have
    \begin{equation}
    \label{est:dtu0aux1}
        \|\p_t u\|_{L^2}^2 + B\int_\Omega \p_t u \cdot (-\Delta u) dx = Bg\int_\Omega \rho \p_t u_2 dx- \int_\Omega \p_t u (u\cdot\nabla u) dx.
    \end{equation}
    Since $u \in W$, we can further compute that
    \begin{align*}
        \frac{1}{2}\frac{d}{dt}\|\nabla u\|_{L^2}^2 &= \int_\Omega \p_t \p_iu_j \p_iu_j dx = \int_\Omega \p_t u \cdot (-\Delta u) dx + \int_{\p\Omega} \p_t u_j \p_2 u_j dS\\
        &= \int_\Omega \p_t u \cdot (-\Delta u) dx + \int_{\p\Omega} \p_t u_1 \p_2 u_1 dS + \int_{\p\Omega} \p_t u_2 \p_2 u_2 dS = \int_\Omega \p_t u \cdot (-\Delta u) dx,
    \end{align*}
    where we used the boundary conditions $u_2|_{\p\Omega} = \p_2 u_1|_{\p\Omega} = 0$ in the third inequality above. Combining with \eqref{est:dtu0aux1} and using Cauchy-Schwarz inequality, we obtain the following energy inequality:
    \begin{equation*}
        \|\p_t u\|_{L^2}^2 + \frac{B}{2}\frac{d}{dt}\|\nabla u\|_{L^2}^2 \le \frac{1}{2}\|\p_t u\|_{L^2}^2 + \frac{3}{4}\left(B^2g^2 \|\rho\|_{L^2}^2 + \|u\cdot\nabla u\|_{L^2}^2\right),\quad t \in [0,s_0].
    \end{equation*}
    Rearranging and integrating in time from $0$ to $s_0$, we have
    \begin{equation}\label{est:dtu0aux2}
    B\|\nabla u(s_0)\|_{L^2}^2 + \int_0^{s_0}\|\p_t u(t)\|_{L^2}^2 dt \le B\|\nabla u_0\|_{L^2}^2 + \frac{3}{2}\int_0^{s_0}\left(B^2g^2 \|\rho(t)\|_{L^2}^2 + \|u\cdot\nabla u(t)\|_{L^2}^2\right)dt.
    \end{equation}
    Invoking Lemma \ref{lem:advection} with $t_0 = 0$, $T = s_0$, we arrive at
    $$
    \int_{0}^{s_0}\|u\cdot\nabla u (t)\|_{L^2}^2 dt \le C(\rho_m, \|u_0\|_1)g^2N_0(g^2N_0s_0 + 1).
    $$
    Note that we can indeed invoke Lemma \ref{lem:advection} here due to $s_0 < \calT_0$ and our assumption on $B$. Combining this bound with \eqref{est:dtu0aux2}, we conclude that
    \begin{align*}
        \int_0^{s_0}\|\p_t u(t)\|_{L^2}^2 dt &\le B\|\nabla u_0\|_{L^2}^2 + C(\rho_m, \|u_0\|_1)(B^2g^2\|\rho_0 - \rho_m\|_{L^2}^2\cdot s_0 + g^4N_0^2s_0 + g^2N_0)\\
        &\le C(\rho_m, \|u_0\|_1)(B + B^2g^2 + g^4N_0^2s_0 + g^2N_0) \le C(\rho_m, \|u_0\|_1)B^2g^2,
    \end{align*}
    where the last inequality follows from choosing $B^2 \ge g^2N_0^2 \ge N_0$, as well as the fact that $s_0 
    \le \rho_m^{-1}$. Hence, we have shown \eqref{est:dtu0int}.
\end{proof}

With the $L^2$ estimate \eqref{est:dtu0} of $\p_t u$ near time zero above, we may further derive the following crucial estimates of $\p_t u$ up to $\calT_0$ thanks to the bootstrap assumption \eqref{bootstrap}. More precisely:
\begin{lem}
\label{lem:dtu}
    Assume the bootstrap assumption \eqref{bootstrap} holds. There exists $\tau_0 < s_0$ and \red{$\calB(\rho_m, \|u_0\|_1)$ large so that for any $B \ge B_0g^2$,}
    \begin{equation}
        \label{est:supdtu}
        \sup_{\tau_0 \le t \le \calT_0}\|\p_t u(t)\|_{L^2}^2 \le 5\calC_1(\rho_m, \|\rho_0 - \rho_m\|_{L^2}, \|u_0\|_1)B^2g^2,
    \end{equation}
    \red{where $\calC_1$ is defined in \eqref{est:dtu0}.} Moreover, for any $t_0 \in [\tau_0, \calT_0]$, we have
    \begin{equation}
        \label{est:l2dtu}
        \int_{t_0}^{t_0 + T_*/9}\|\p_t u(t)\|_{L^2}^2 dt \le C(\rho_m, \|u_0\|_1)Bg^2.
    \end{equation}
    \red{Additionally, if $t_0 \in [\tau_0, \calT_0 - \frac{8}{9}T_*]$, then
    \begin{equation}\label{est:l2dtu1}
        \int_{t_0}^{t_0 + T_*}\|\p_t u(t)\|_{L^2}^2 dt \le C(\rho_m, \|u_0\|_1)Bg^2.
    \end{equation}
    }
\end{lem}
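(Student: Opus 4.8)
The plan is to test the time-differentiated momentum equation against $\p_t u$ and run a Gr\"onwall argument in which the viscous damping — enhanced by the size of $B$ — overpowers the time-averaged amplification coming from $\nabla u$. Writing $w:=\p_t u$, which is smooth on $(0,\calT_0]\times\Omega$ since the solution is regular, differentiation of \eqref{eq:ksstokesvel} in time gives $\p_t w + w\cdot\nabla u + u\cdot\nabla w - B\Delta w + \nabla(\p_t p) = Bg\,(\p_t\rho)\,(0,1)^T$ with $\divv w=0$. Differentiating the boundary conditions gives $w_2|_{\p\Omega}=\p_2 w_1|_{\p\Omega}=0$, and since $\int_\Omega u_1\,dx$ is conserved and vanishes (as $u_0\in V$), also $\int_\Omega w_1\,dx=0$, so $w(t,\cdot)\in V$ for $t>0$. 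Testing by $w$, the convection and pressure terms drop out (incompressibility and $w_2|_{\p\Omega}=0$), and the Laplacian gives $B\|\nabla w\|_{L^2}^2$ with no boundary term (using $w_2|_{\p\Omega}=\p_2 w_1|_{\p\Omega}=0$), so
\[
\tfrac12\tfrac{d}{dt}\|w\|_{L^2}^2 + B\|\nabla w\|_{L^2}^2 = -\int_\Omega(w\cdot\nabla u)\cdot w\,dx + Bg\int_\Omega(\p_t\rho)\,w_2\,dx.
\]
Using $|\int_\Omega(w\cdot\nabla u)\cdot w|\le\|w\|_{L^4}^2\|\nabla u\|_{L^2}\le C\|w\|_{L^2}\|\nabla w\|_{L^2}\|\nabla u\|_{L^2}$ (Ladyzhenskaya in $2$D and the Poincar\'e inequality \eqref{est:Poincare}) and $Bg|\int_\Omega(\p_t\rho)w_2|\le CBg\|\p_t\rho\|_{H^{-1}_0}\|\nabla w\|_{L^2}$, absorbing both into the dissipation and invoking \eqref{est:Poincare} once more, I arrive at
\[
\tfrac{d}{dt}\|w\|_{L^2}^2 + \Big(\tfrac{B}{C_P^2}-\tfrac{C}{B}\|\nabla u\|_{L^2}^2\Big)\|w\|_{L^2}^2 \le CBg^2\|\p_t\rho\|_{H^{-1}_0}^2.
\]

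Next I would integrate with the factor $\mu(t):=\exp\int_{\tau_0}^t\big(\tfrac{B}{C_P^2}-\tfrac{C}{B}\|\nabla u(r)\|_{L^2}^2\big)dr$, where $\tau_0\in(0,s_0)$ comes from Lemma \ref{lem:dtu0} (whose hypothesis on $B$ is subsumed once $B_0$ is large, as $g\ge g_1\ge1$) and $s_0\le\calT_0$ by Corollary \ref{cor:chartime} with $N=\|\rho_0-\rho_m\|_{L^2}^2$ (which keeps $\|\rho(t)-\rho_m\|_{L^2}^2\le 2\|\rho_0-\rho_m\|_{L^2}^2\le N_0<9N_0$ on $[0,s_0]$). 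The decisive estimate is \eqref{est:dxu}: tiling $[s,t]\subset[\tau_0,\calT_0]$ by intervals of length $T_*/9$ gives $\int_s^t\|\nabla u\|_{L^2}^2\,dr\le C(\rho_m,\|u_0\|_{L^2})g^2\big(\tfrac{9(t-s)}{T_*}+1\big)=Cg^2N_0(t-s)+Cg^2$, so when $B\ge B_0g^2$ with $B_0$ large (depending on $\rho_m,\|\rho_0-\rho_m\|_{L^2},\|u_0\|_1$),
\[
\int_s^t\Big(\tfrac{B}{C_P^2}-\tfrac{C}{B}\|\nabla u\|_{L^2}^2\Big)dr \ \ge\ \tfrac{B}{2C_P^2}(t-s)-\tfrac{C}{B_0},\qquad \tau_0\le s\le t\le\calT_0,
\]
hence $\mu(s)/\mu(t)\le e^{C/B_0}e^{-a(t-s)}$ with $a:=B/(2C_P^2)$. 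This is exactly what makes the argument immune to $\calT_0$ possibly being large. Multiplying the differential inequality by $\mu$, integrating from $\tau_0$, and dividing by $\mu(t)$ then gives, for $\tau_0\le t\le\calT_0$,
\[
\|w(t)\|_{L^2}^2 \ \le\ e^{C/B_0}e^{-a(t-\tau_0)}\|w(\tau_0)\|_{L^2}^2 + Ce^{C/B_0}Bg^2\int_{\tau_0}^t e^{-a(t-s)}\|\p_t\rho(s)\|_{H^{-1}_0}^2\,ds.
\]

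To conclude, I would use \eqref{est:dtu0} for $\|w(\tau_0)\|_{L^2}^2\le\calC_1 B^2g^2$, and, tiling $[\tau_0,t]$ from the right by intervals of length $T_*/9$ and using \eqref{est:dtrho} of Lemma \ref{lem:dtrho} together with $aT_*/9\gtrsim B_0\gg1$, bound $\int_{\tau_0}^t e^{-a(t-s)}\|\p_t\rho\|_{H^{-1}_0}^2\,ds$ by a convergent geometric series summing to $\le Cg^2N_0^2$; hence $\|w(t)\|_{L^2}^2\le e^{C/B_0}\calC_1 B^2g^2 + Ce^{C/B_0}Bg^4N_0^2$, and choosing $B_0$ so large that $e^{C/B_0}\le\tfrac54$ and $Cg^4N_0^2\le\tfrac{15}{4}\calC_1 Bg^2$ (i.e.\ $CN_0^2\le\tfrac{15}{4}\calC_1 B_0$) yields \eqref{est:supdtu}. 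For \eqref{est:l2dtu} I would integrate the last display over $t\in[t_0,t_0+T_*/9]$: the first term integrates to $\le e^{C/B_0}a^{-1}\|w(\tau_0)\|_{L^2}^2\lesssim Bg^2$, and, after Fubini, the convolution term gains a factor $a^{-1}\sim B^{-1}$ against the windowed bound \eqref{est:dtrho}, giving $\lesssim Bg^2\cdot B^{-1}\cdot g^2N_0^2=g^4N_0^2\lesssim Bg^2$ (using $B\ge B_0g^2$). Finally \eqref{est:l2dtu1} follows by writing $[t_0,t_0+T_*]=\bigcup_{j=0}^{8}[t_0+jT_*/9,\,t_0+(j+1)T_*/9]$ — whose nine left endpoints lie in $[\tau_0,\calT_0]$ as soon as $t_0\in[\tau_0,\calT_0-\tfrac89 T_*]$ — and summing \eqref{est:l2dtu}. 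The main obstacle is the step highlighted above: because the bootstrap horizon $\calT_0$ is a priori unbounded, a naive Gr\"onwall loses, and one must exploit $B\gg g^2N_0$ so that the viscous damping $B/C_P^2$ beats the time-averaged amplification $\tfrac{C}{B}\|\nabla u\|_{L^2}^2$, upgrading the estimate to a genuine exponential contraction uniform in $\calT_0$; retaining the exponential kernel (rather than a crude pointwise bound) is what makes the integrated estimates \eqref{est:l2dtu}--\eqref{est:l2dtu1} linear, not quadratic, in $B$.
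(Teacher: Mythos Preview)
Your argument is correct and in fact streamlines the paper's proof. The core strategy is the same---differentiate the momentum equation, test against $w=\p_t u$, and run a Duhamel/Gr\"onwall argument in which the viscous damping of size $\sim B$ dominates the time-averaged amplification coming from $\nabla u$ (controlled via \eqref{est:dxu} by tiling with $T_*/9$-windows), retaining the exponential kernel so that the time-integrated bounds come out linear in $B$. The key difference is your treatment of the cubic term $\int (w\cdot\nabla u)\cdot w$: you use $w\in V$ together with Poincar\'e \eqref{est:Poincare} to get $\|w\|_{L^4}^2\le C\|w\|_{L^2}\|\nabla w\|_{L^2}$, which after absorption leaves only $\tfrac{C}{B}\|\nabla u\|_{L^2}^2$ in the Gr\"onwall exponent. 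The paper does not apply Poincar\'e at this step and keeps the full $\|w\|_{L^4}^2\le C\|w\|_{L^2}(\|w\|_{L^2}+\|\nabla w\|_{L^2})$, which produces an additional term $C\|\nabla u\|_{L^2}$ (linear, not squared); this forces the $\calC_2 g(t-s)^{1/2}$ contribution in the Duhamel exponent and the subsequent two-timescale splitting into $[\tau_0,\tau_0+100C_P/B]$ and $(\tau_0+100C_P/B,\tau_0+T_*/9]$ followed by an iteration (Steps 2.1--2.2). Your route avoids all of that machinery at no cost. Two minor remarks: (i) for \eqref{est:l2dtu} with $t_0$ near $\calT_0$ the Duhamel window reaches into $(\calT_0,\calT_0+T_*/9]$, but the ingredients \eqref{est:dxu}, \eqref{est:dtrho} are already stated for $t_0\in[0,\calT_0]$ and the differential inequality persists there by Corollary \ref{cor:chartime}, so this is harmless; (ii) your threshold $B_0$ depends on $N_0$ (hence on $\|\rho_0-\rho_m\|_{L^2}$), which matches the paper's actual constraints \eqref{bcond3}, \eqref{bcond7} even though the lemma's stated dependence is slightly looser.
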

\begin{rmk}
    We remark that \eqref{est:l2dtu} does not follow directly from integrating \eqref{est:supdtu} in time. In fact, doing so will only lead to an insufficient upper bound of size $B^2g^2N_0^{-1}$.
\end{rmk}
\begin{proof}
    The proof for this lemma is rather lengthy, so we split the proof in several steps. \\
    \textbf{Step 1: Energy Estimate for $\p_t u$.} We start with the evolution of $\p_t u$. By differentiating \eqref{eq:ksstokesvel} in time, it is straightforward to check that $\p_t u$ satisfies the following equation:
    $$
    \p_t^2 u + \p_t u \cdot \nabla u + u\cdot \nabla \p_t u - B\Delta\p_t u + \nabla \p_t p = Bg(\p_t\rho)(0,1)^T.
    $$
Testing by $\p_t u$, using $\divv(\p_t u) = 0$ and the fact that $\p_t u$ still satisfies the Lions boundary condition, we have for arbitrary $\epsilon > 0$ that
\begin{equation}\label{dtuaux1}
\begin{split}
        \frac{1}{2}\frac{d}{dt}\|\p_t u\|_{L^2}^2 &+ B\|\nabla \p_t u\|_{L^2}^2 = -\int_\Omega (\p_t u_j) (\p_j u_i) (\p_t u_i) dx + Bg\int_\Omega \p_t \rho \p_t u_2 dx \\
        &\le -\int_\Omega (\p_t u_j) (\p_j u_i) (\p_t u_i) dx + \epsilon B\|\p_t u_2\|_{H^1_0}^2 + C(\epsilon)Bg^2 \|\p_t \rho\|_{H^{-1}_0}^2 \\
        &\le -\int_\Omega (\p_t u_j) (\p_j u_i) (\p_t u_i) dx + \epsilon CB\|\nabla \p_t u\|_{L^2}^2 + C(\epsilon)Bg^2 \|\p_t \rho\|_{H^{-1}_0}^2,
    \end{split}
\end{equation}
where we used the $H^1_0$--$H^{-1}_0$ duality in the first inequality, and Poincar\'e inequality in the final inequality. The use of duality is indeed valid since the Lions boundary condition implies that $u_2$, and thus $\p_t u_2$, satisfies zero Dirichlet boundary condition. 

To estimate the first term on the RHS of the estimate above, an application of Ladyzhenskaya's inequality in 2D and Cauchy-Schwarz inequality gives:
\begin{equation}\label{dtuaux2}
\begin{split}
\int_\Omega (\p_t u_j) (\p_j u_i) (\p_t u_i) dx &\le \|\p_t u\|_{L^4}^2\|\nabla u\|_{L^2} \le C\|\p_t u\|_{L^2}(\|\p_t u\|_{L^2} + \|\nabla \p_t u\|_{L^2})\|\nabla u\|_{L^2}\\
&\le \frac{B}{4}\|\nabla \p_t u\|_{L^2}^2 + \frac{C}{B}\|\p_t u\|_{L^2}^2\|\nabla u\|_{L^2}^2 + C\|\p_t u\|_{L^2}^2\|\nabla u\|_{L^2}.
\end{split}
\end{equation}

Now choosing $\epsilon > 0$ sufficiently small in \eqref{dtuaux1}, combining with \eqref{dtuaux2}, and rearranging, we have the following differential inequality:
\begin{equation}
    \label{est:stokesptu}
    \begin{split}
    \frac{d}{dt}\|\p_t u\|_{L^2}^2 &\le -B\|\nabla \p_t u\|_{L^2}^2 + \frac{C}{B}\|\p_t u\|_{L^2}^2\|\nabla u\|_{L^2}^2 +C\|\p_t u\|_{L^2}^2\|\nabla u\|_{L^2} + CBg^2 \|\p_t\rho\|_{H^{-1}_0}^2\\
    &\le \left(\frac{C}{B}\|\nabla u\|_{L^2}^2 + C\|\nabla u\|_{L^2} -\frac{B}{C_P}\right)\|\p_tu\|_{L^2}^2 + CBg^2\|\p_t\rho\|_{H^{-1}_0}^2,
    \end{split}
\end{equation}
where we used the Poincar\'e inequality \eqref{est:Poincare} in the second inequality.

Choose $\tau_0$ as in Lemma \ref{lem:dtu0}. We write \eqref{est:stokesptu} in Duhamel form and use \eqref{est:dtu0} to obtain that, for any $t \in [\tau_0,\tau_0 + T_*/9]$,
    \begin{equation}
    \label{est:dtuduhamel1}
    \begin{split}
    \|\p_t u (t)\|_{L^2}^2 &\le \calC_1B^2g^2\exp\left(-\frac{B(t - \tau_0)}{C_P} + C\int_{\tau_0}^t \left(\frac{1}{B}\|\nabla u(s)\|_{L^2}^2 + \|\nabla u(s)\|_{L^2} \right)ds\right)\\
    &\quad+ CBg^2\int_{\tau_0}^t \exp\left(-\frac{B(t-s)}{C_P} + C\int_s^t \left(\frac{1}{B}\|\nabla u(\tau)\|_{L^2}^2 + \|\nabla u(\tau)\|_{L^2}\right) d\tau\right)\|\p_t \rho (s)\|_{H^{-1}_0}^2 ds\\
    &\le \calC_1B^2g^2\exp\left(-\frac{B(t - \tau_0)}{C_P} + C(\rho_m, \|u_0\|_1)\left(\frac{g^2}{B} + g(t - \tau_0)^{1/2}\right)\right)\\
    &\quad+ CBg^2\int_{\tau_0}^t \exp\left(-\frac{B(t-s)}{C_P} + C(\rho_m, \|u_0\|_1)\left(\frac{g^2}{B} + g(t - s)^{1/2}\right)\right) \|\p_t \rho (s)\|_{H^{-1}_0}^2 ds.
    \end{split}
    \end{equation}
    Here, we used \eqref{est:dxu} in the second inequality above. We also utilized the following consequence of \eqref{est:dxu}: for any $\tau_0 \le a \le b \le a + T_*/9$,
    $$
    \int_a^b \|\nabla u(s)\|_{L^2} ds \le (b-a)^{1/2}\left(\int_{a}^{a + T_*/9} \|\nabla u(s)\|_{L^2}^2 ds\right)^{1/2} \le C(\rho_m, \|u_0\|_1)g(b-a)^{1/2}.
    $$
    Then, we choose $B$ sufficiently large that
    \begin{equation}\label{bcond2}
    B \ge \frac{\calC_2}{\log 2}g^2,
    \end{equation}
    \red{where $\calC_2 := C(\rho_m, \|u_0\|_1)$ is the constant appeared in \eqref{est:dtuduhamel1}.} This reduces \eqref{est:dtuduhamel1} to
    \begin{equation}
    \label{est:dtuduhamel2}
    \begin{split}
    \|\p_t u (t)\|_{L^2}^2 &\le 2\calC_1B^2g^2 \exp\left({-\frac{B(t-\tau_0)}{C_P}} + \calC_2 g(t-\tau_0)^{1/2}\right)\\
    &\quad+ CBg^2\int_{\tau_0}^t \exp\left(-\frac{B(t-s)}{C_P} +  \calC_2 g(t-s)^{1/2}\right)\|\p_t \rho (s)\|_{H^{-1}_0}^2 ds =: I_1 + I_2.
    \end{split}
    \end{equation}
    This concludes the proof of energy estimates that we will use in later steps.

\noindent\textbf{Step 2: Proof of \eqref{est:supdtu}.} {The plan of proving \eqref{est:supdtu} is as follows: we first show a uniform bound similar to the desired control \eqref{est:supdtu}, yet on a shorter time interval, together with an improved bound at the endpoint of this interval. Namely, we will show that
\begin{equation}
    \label{est:supdtushort1}
    \sup_{\tau_0 \le t \le \tau_0 + T_*/9}\|\p_t u(t)\|_{L^2}^2 \le 5\calC_1B^2g^2,
\end{equation}
and additionally
\begin{equation}
    \label{est:supdtushort2}
    \|\p_t u(\tau_0 + T_*/9)\|_{L^2}^2 \le \calC_1B^2g^2 = \|\p_t u(\tau_0)\|_{L^2}^2.
\end{equation}
With \eqref{est:supdtushort1} and \eqref{est:supdtushort2}, we can prove \eqref{est:supdtu} using an iteration argument. 
}

\noindent\textit{\textbf{Step 2.1: Proof of \eqref{est:supdtushort1} and \eqref{est:supdtushort2}}.} To prove the desired estimates \eqref{est:supdtushort1} and \eqref{est:supdtushort2}, we study the inequality \eqref{est:dtuduhamel2}. The key point is that we have to estimate \eqref{est:dtuduhamel2} in two different time scales. \red{Roughly speaking, we will choose $B^{-1} \ll T_*$ i.e. the dissipation time for $u$ being much smaller than that for $\rho$. In this scenario, $\|\p_t u\|_{L^2}$ may grow on time interval $[\tau_0, \tau_0 + B^{-1}]$, but it will not grow much due to smallness of $B^{-1}$. Yet on $[\tau_0 + B^{-1}, \tau_0 + T_*/9]$, the dissipation effect becomes significant and is sufficient to damp growth generated by nonlinear terms.}
    
    More precisely, let us choose $B$ such that 
    \begin{equation}\label{bcond3}
    \begin{split}
    \frac{100C_P}{B} <  \frac{c_0}{9N_0}=\frac{T_*}{9},&\quad B \ge \frac{100C_P\calC_2^2}{(\log2)^2}g^2,
    \end{split}
    \end{equation}
    \red{where $C_P$ is the Poincar\'e constant appeared in \eqref{est:dtuduhamel2}.} From \eqref{bcond3}, we obtain
    \begin{equation}\label{dtuaux11}
    \exp\left({-\frac{B(t-a)}{C_P}} + \calC_2 g(t-a)^{1/2}\right) \le \exp\left({-\frac{B(t-a)}{C_P}}\right)\exp\left(\frac{10\sqrt{C_P}\calC_2 g}{\sqrt{B}}\right) \le 2,
    \end{equation}
    for any time $a$ so that $0 \le t-a \le \frac{100C_P}{B}$.\\
    
    \noindent\textbf{Estimate on $[\tau_0, \tau_0 + \frac{100C_P}{B}]$.} If $t \in [\tau_0, \tau_0 + \frac{100C_P}{B}]$, we may invoke \eqref{dtuaux11} to estimate $I_1$ and $I_2$ by:
    $$
    I_1 \le 4\calC_1B^2g^2,\; I_2 \le CBg^2\int_{\tau_0}^{\tau_0 + T_*/9}\|\p_t \rho(s)\|_{H^{-1}_0}^2 ds \le C(\rho_m, \|u_0\|_{L^2})Bg^4N_0^2,
    $$
    where we used Lemma \ref{lem:dtrho} in the last inequality. Choosing $B$ sufficiently large that 
    \begin{equation}\label{bcond4}
    C(\rho_m, \|u_0\|_{L^2})g^2N_0^2 < \frac{\calC_1B}{2},
    \end{equation}
    where $C(\rho_m, \|u_0\|_{L^2})$ is the constant appearing in the previous estimate. We then conclude that
  \begin{equation}\label{comeback0}
  \sup_{\tau_0 \le t \le \tau_0 + \frac{100C_P}{B}}\| \p_t u(t)\|_{L^2}^2 < 5\calC_1B^2g^2.
  \end{equation}
    
    \noindent\textbf{Estimate on $(\tau_0 + \frac{100C_P}{B}, \tau_0 + T_*/9]$.} Suppose that $t \in (\tau_0 + \frac{100C_P}{B}, \tau_0 + T_*/9]$. In this case, we further choose $B$ large so that
    \begin{equation}\label{bcond5}
    B \ge \frac{(2C_P\calC_2)^2}{100C_P}g^2.
    \end{equation}
    Such choice guarantees that for any time $a \in [\tau_0, t - \frac{100C_P}{B}]$, we have
    \begin{equation}\label{dtuaux22}
    -\frac{B(t-a)}{C_P} + \calC_2 g(t-a)^{1/2} \le -\frac{B(t-a)}{2C_P} \le -50.
    \end{equation}
    Therefore using \eqref{dtuaux22}, we can bound $I_1$ and $I_2$ in \eqref{est:dtuduhamel2} by
    \begin{equation}\label{dtuI1aux1}
    I_1 \le 2\calC_1B^2g^2\exp\left(-\frac{B(t-\tau_0)}{2C_P}\right) \le e^{-50}\calC_1B^2g^2 < \frac{1}{4}\calC_1B^2g^2.
    \end{equation}
    \begin{equation}\label{dtuI2aux1}
    \begin{split}
        I_2 &= CBg^2\left(\int_{\tau_0}^{t-\frac{100C_P}{B}} + \int_{t-\frac{100C_P}{B}}^{t}\right) \exp\left(-\frac{B(t-s)}{C_P} +  \calC_2 g(t-s)^{1/2}\right)\|\p_t \rho (s)\|_{H^{-1}_0}^2 ds\\
        &\le CBg^2(e^{-50} + 2)\int_{\tau_0}^{\tau_0 + T_*/9}\|\p_t \rho(s)\|_{H^{-1}_0}^2 ds \le C(\rho_m, \|u_0\|_{L^2})Bg^4N_0^2,
    \end{split}
    \end{equation}
    where we used \eqref{dtuaux22} to estimate the first integral and \eqref{dtuaux11} to estimate the second integral in the first inequality above. \red{Then we choose $B$ even larger that
    \begin{equation}
        \label{bcond6}
        C(\rho_m, \|u_0\|_{L^2})g^2N_0^2 \le \frac{\calC_1}{2}B,
    \end{equation}
    where $C(\rho_m, \|u_0\|_{L^2})$ is the constant appearing in the last line of \eqref{dtuI2aux1}. Combining \eqref{bcond6} with \eqref{dtuI2aux1}, we conclude that $I_2$ in \eqref{est:dtuduhamel2} can be bounded as follows:
    \begin{equation}\label{dtuI2aux2}
    I_2 \le \frac{\calC_1}{2}B^2g^2.
    \end{equation}
    }
    Finally, we combine \eqref{dtuI1aux1} and \eqref{dtuI2aux2} to deduce that
    \begin{equation}\label{comeback}
        \sup_{\tau_0 + \frac{100C_P}{B} \le t \le \tau_0 + T_*/9}\|\p_t u(t)\|_{L^2}^2 < \calC_1B^2g^2.
    \end{equation}
    \red{Collecting both \eqref{comeback0} and \eqref{comeback} directly yields \eqref{est:supdtushort1} and \eqref{est:supdtushort2}.}
    
    \noindent\textit{\textbf{Step 2.2: Iterative Procedure.}} \red{With \eqref{est:supdtushort1} and \eqref{est:supdtushort2}, we may start to iterate Step 2.1 and 2.2 for subsequent time intervals $[\tau_0 + iT_*/9, \tau_0 + (i+1)T_*/9]$, $i \ge 1$. Notice that all estimates involved in Step 2.1 and 2.2 only used that $[\tau_0, \tau_0 + T_*/9] \subset [0, \calT_0 + T_*/9]$ and $\|\p_t u(\tau_0)\|_{L^2}^2 \le \calC_1 B^2g^2$. Hence, we may define $i_*$ to be the largest natural number so that $[\tau_0 + i_*T_*/9, \tau_0 + (i_* + 1)T_*/9] \subset [0,\calT_0 + T_*/9]$. The above arguments will conclude that
    $$
    \sup_{\tau_0 \le t \le \tau_0 + (i_* + 1)T_*/9} \|\p_t u (t)\|_{L^2}^2 \le 5\calC_1 B^2g^2.
    $$
    We may without loss of generality consider $\calT_0 < \infty$. In this case, by definition of $i_*$, we must have $\tau_0 + (i_* + 1)T_*/9 > \calT_0$, since otherwise $[\tau_0 + (i_* + 1)T_*/9, \tau_0 + (i_* + 2)T_*/9] \subset [0,\calT_0 + T_*/9]$, contradicting the maximality of $i_*$. We may then conclude that
    $$
    \sup_{\tau_0 \le t \le \calT_0} \|\p_t u (t)\|_{L^2}^2 \le \sup_{\tau_0 \le t \le \tau_0 + (i_* + 1)T_*/9} \|\p_t u (t)\|_{L^2}^2 \le 5\calC_1 B^2g^2.
    $$
    This concludes the proof of \eqref{est:supdtu}, after we choose threshold $\calB$ so large that \eqref{bcond2}, \eqref{bcond3}, \eqref{bcond4}, \eqref{bcond5}, and \eqref{bcond6} hold for all $B \ge \calB g^2$. Note that we can indeed choose such $\calB$, since all conditions of choosing $B$ assume the form $B \ge Cg^2$ with some $C$.
    }
    
    \noindent\textbf{Step 3: Proof of \eqref{est:l2dtu}.} We write \eqref{est:dtuduhamel2} in Duhamel form starting at time $t_0 \in [\tau_0,\calT_0]$:
    \begin{equation}
    \label{dtuaux44}
    \begin{split}
        \int_{t_0}^{t_0 + T_*/9}\|\p_t u(t)\|_{L^2}^2 dt &\le 2\|\p_t u(t_0)\|_{L^2}^2\int_{t_0}^{t_0 + T_*/9} \exp{\left(-\frac{B}{C_P}(t-t_0) + \calC_2 g(t-t_0)^{1/2}\right)} dt\\
        &\quad+ CBg^2\int_{t_0}^{t_0 + T_*/9}\int_{t_0}^t \exp{\left(-\frac{B}{C_P}(t-s) + \calC_2 g(t-s)^{1/2}\right)}\|\p_t\rho(s)\|_{H^{-1}_0}^2ds dt\\
        &\le 10\calC_1B^2g^2\int_{t_0}^{t_0 + T_*/9} \exp{\left(-\frac{B}{C_P}(t-t_0) + \calC_2 g(t-t_0)^{1/2}\right)} dt\\
        &\quad+ CBg^2\int_{t_0}^{t_0 + T_*/9}\int_{t_0}^t \exp{\left(-\frac{B}{C_P}(t-s) + \calC_2 g(t-s)^{1/2}\right)}\|\p_t\rho(s)\|_{H^{-1}_0}^2ds dt.
        \end{split}
    \end{equation}
    Here, we used \eqref{est:supdtu} in the second inequality. Since \eqref{bcond3} and \eqref{bcond5} hold, we may run a similar argument in deducing \eqref{dtuaux11} and \eqref{dtuaux22} to conclude that
    \begin{equation}
        \label{dtuaux33}
        \begin{split}
        \exp{\left(-\frac{B}{C_P}(t-s) + \calC_2 g(t-s)^{1/2}\right)} &\le \begin{cases}
            2\exp{\left(-\frac{B(t-s)}{C_P}\right)},& 0 \le t-s \le \frac{100C_P}{B},\\
            \exp{\left(-\frac{B(t-s)}{2C_P}\right)},& \frac{100C_P}{B} < t-s \le \frac{T_*}{9},
        \end{cases}\\
        &\le 2\exp{\left(-\frac{B(t-s)}{2C_P}\right)},
        \end{split}
    \end{equation}
    for $0 \le t-s \le T_*/9$. Applying \eqref{dtuaux33} to \eqref{dtuaux44}, we have
    \begin{align*}
        \int_{t_0}^{t_0 + T_*/9}\|\p_t u(t)\|_{L^2}^2 dt &\le  C(\rho_m, \|u_0\|_1)B^2g^2\frac{C_P}{B}\left(1- e^{\frac{-BT_*}{18C_P}}\right)\\
        &\quad+ CBg^2 \int_{t_0}^{t_0 + T_*/9}\left(\int_{t_0}^se^{-\frac{B(t-s)}{2C_P}}dt\right)\|\p_t\rho(s)\|_{H^{-1}_0}^2dt\\
        &\le  C(\rho_m, \|u_0\|_1)Bg^2 + Cg^2\int_{t_0}^{t_0 + T_*/9}\|\p_t\rho(s)\|_{H^{-1}_0}^2dt\\
        &\le C(\rho_m, \|u_0\|_1)(Bg^2 + g^4N_0^2) \le C(\rho_m, \|u_0\|_1)Bg^2,
    \end{align*}
    if we choose $B$ sufficiently large that
    \begin{equation}\label{bcond7}
        B \ge g^2N_0^2.
    \end{equation}
    Note that we used \eqref{est:supdtu} and Fubini theorem in the first inequality, and Lemma \ref{lem:dtrho} in the third inequality. This concludes the proof of \eqref{est:l2dtu} after we choose threshold $\calB$ so that \eqref{bcond2}, \eqref{bcond3}, \eqref{bcond4}, \eqref{bcond5}, \eqref{bcond6}, and \eqref{bcond7} hold \red{for all $B \ge B_0g^2$}.   

    \noindent\textbf{Step 4: Proof of \eqref{est:l2dtu1}.} \red{Given $t_0 \in [\tau_0, \calT_0-\frac{8}{9}T_*]$, we note that \eqref{est:l2dtu1} follows directly from \eqref{est:l2dtu}:
    \begin{align*}
        \int_{t_0}^{t_0 + T_*}\|\p_t u(t)\|_{L^2}^2 dt &= \sum_{i = 0}^{8}\int_{t_0 + iT_*/9}^{t_0 + (i+1)T_*/9}\|\p_t u(t)\|_{L^2}^2 dt \le C(\rho_m, \|u_0\|_1)Bg^2,
    \end{align*}
    where we applied \eqref{est:l2dtu} for each term in the inequality above. }
\end{proof}

\subsection{Closing the Bootstrap}
\label{subsect:closing1}
In this section, we show the improved estimate as stated in Proposition \ref{prop:bootstrap}. Consider a time instance $\calT_1$ to be the first time such that $\|\rho - \rho_m\|_{L^2}^2$ achieves $N_0$, \red{where we recall that $N_0$ was chosen to satisfy \eqref{N0choice1} and \eqref{N0choice2}.} Note that $\calT_1$ verifies the following two properties:
\begin{enumerate}
    \item $\calT_1 > s_0$, where $s_0$ is chosen as in Lemma \ref{lem:dtu0};
    \item $\calT_1 + T_* < \calT_0$.
\end{enumerate}
The first property is due to $N_0 > 2\|\rho_0 - \rho_m\|_{L^2}^2$ and Corollary \ref{cor:chartime}; the second property is due to \red{the choice that $T_* = 2c_0N_0^{-1}$.}

Now, we set up the following comparison scheme: let $\rho_s, u_s$ be the (global) regular solution to the following static problem
 \begin{equation}
     \label{eq:ksstatstokes2}
     \begin{cases}
     \p_t \rho_s + u_s\cdot \nabla \rho_s -\Delta \rho_s + \divv(\rho_s\nabla(-\Delta_N)^{-1}(\rho_s - (\rho_s)_m)) = 0,\\
     -\Delta u_s + \nabla p_s = g\rho_s(0,1)^T,\quad \divv u_s = 0,\\
    \p_2 \rho_s\big|_{\p \Omega} = 0,\; (u_s)_2\big|_{\p \Omega} = \p_2(u_s)_1\big|_{\p \Omega} = 0,\; \rho_s(0,x) = \rho(\calT_1,x) \ge 0. 
    \end{cases}
 \end{equation}
 Define $r(t,x) = \rho(t,x) - \rho_s(t-\calT_1, x)$, $v(t,x) = u(t,x) - u_s(t-\calT_1,x)$ for $t \in [\calT_1, \calT_1 + T_*]$. A straightforward computation yields the following equations for $r$ and $v$:
\begin{subequations}
        \label{eq:diffeq}
        \begin{equation}
        \label{eq:ksstokesreq}
            \p_t r - \Delta r + v\cdot \nabla \rho + u_s\cdot\nabla r + \divv(r\nabla(-\Delta_N)^{-1}(\rho - \rho_m)) + \divv(\rho_s\nabla(-\Delta_N)^{-1}r) = 0,
        \end{equation}
        \begin{equation}
            \label{eq:ksstokesveq}
            -\Delta v + \nabla(B^{-1}p - p_s) = -\frac{1}{B}\p_t u - \frac{1}{B}(u\cdot \nabla u) + gr(0,1)^T,\; \divv v = 0,
        \end{equation}
        \begin{equation}
            r(\calT_1,x) = 0,\; v(\calT_1,x) = u(\calT_1,x)-u_s(\calT_1,x),
        \end{equation}
        \begin{equation}
        \label{eq:bdry1}
            \p_2 r|_{\p \Omega} = 0,\; v_2|_{\p \Omega} = \p_2v_1|_{\p\Omega} = 0.
        \end{equation}
    \end{subequations}
    Notice that in the derivation of \eqref{eq:ksstokesreq}, we used the fact that
    $$
    \rho(t,x) - \rho_m - (\rho_s(t-\calT_1, x) - (\rho_s)_m) = \rho(t,x) - \rho_s(t-\calT_1, x) = r(t,x),
    $$
    thanks to the observation that
    $
    (\rho_s)_m = (\rho(\calT_1,\cdot))_m = \rho_m.
    $
    \red{Our plan is to establish appropriate controls of $\|r(t)\|_{L^2}$ and $\|\rho_s(t - \calT_1) - \rho_m\|_{L^2}$ on time interval $[\calT_1, \calT_1 + T_*]$.}
    
    \noindent\textbf{Step 1: Control of $\|r(t)\|_{L^2}$.} To control the remainder $r$, we study \eqref{eq:ksstokesreq} by testing this equation by $r$. By incompressibility of $u_s$, we deduce that for any $t \in [\calT_1,\calT_1 + T_*]$:
    \begin{align*}
        \frac{1}{2}\frac{d}{dt}\|r\|_{L^2}^2 + \|\nabla r\|_{L^2}^2 &= -\int_\Omega rv\cdot \nabla \rho - \int_\Omega r \divv(r\nabla(-\Delta_N)^{-1}(\rho - \rho_m)) - \int_\Omega r \divv(\rho_s\nabla(-\Delta_N)^{-1}r)\\
        &= K_1 + K_2 + K_3.
    \end{align*}
    We first treat $K_2, K_3$. Integrating by parts, we have
    \begin{align}\label{k2}
        K_2 &= \frac{1}{2}\int_\Omega \nabla (r^2) \cdot \nabla(-\Delta_N)^{-1}(\rho - \rho_m) = \frac{1}{2}\int_\Omega r^2(\rho - \rho_m)dx \le \frac{1}{2}\|\rho - \rho_m\|_{L^\infty}\|r\|_{L^2}^2.
    \end{align}
    To estimate $K_3$, we integrate by parts and obtain
    \begin{align}\label{k3}
    K_3 &= \int_\Omega \rho_s \nabla r \cdot \nabla(-\Delta_N)^{-1}r dx \le \|\nabla r\|_{L^2}\|\rho_s\|_{L^4}\|\nabla(-\Delta_N)^{-1}r\|_{L^4}\notag\\
    &\le \frac14 \|\nabla r\|_{L^2}^2 + \frac34\|\rho_s\|_{L^4}^2\|\nabla(-\Delta_N)^{-1}r\|_{L^4}^2\notag\\
    &\le \frac14 \|\nabla r\|_{L^2}^2 + \frac34\|\rho_s\|_{L^4}^2\|r\|_{L^2}^2,
    \end{align}
    where we used the Sobolev embedding $H^1 \subset L^4$ and elliptic estimates. Finally, we treat $K_1$, which potentially contributes the growth of $\|r\|_{L^2}$ due to the appearance of $v$ in the term. In fact, we have
    \begin{align*}
        K_1 &= -\int_\Omega rv\cdot \nabla \rho dx = \int_\Omega \divv(rv)\rho dx - \int_{\p\Omega} r v_2\rho d\sigma\\
        &= \int_\Omega \nabla r \cdot v \rho dx \le \|\nabla r\|_{L^2}\|v\|_{L^\infty} \|\rho\|_{L^2} \le \frac14\|\nabla r\|_{L^2}^2 + \frac38\|v\|_2^2\|\rho\|_{L^2}^2,
    \end{align*}
    where we used $v_2|_{\p\Omega} = 0$ in the third equality, and Sobolev embedding $H^2 \subset L^\infty$ in the final inequality. 
    
    Now, we claim that $-B^{-1}\p_t u(t,\cdot) - B^{-1}u\cdot\nabla u + gr(t,\cdot)(0,1)^T \in H$. Indeed, as $u(t,\cdot) \in W$ by Theorem \ref{thm:lwpstokes}, we have $\int_\Omega u_1(x) dx = 0$. This leads to
    $$
    \int_\Omega \left(-B^{-1}\p_t u + gr(0,1)^T\right)_1 dx = \int_\Omega -B^{-1}\p_t u_1 dx = -B^{-1}\p_t\int_\Omega u_1(x)dx = 0.
    $$
    Moreover, we observe that using Lions boundary condition satisfied by $u$,
    \begin{align*}
        \int_\Omega (u\cdot\nabla u)_1 dx = \int_\Omega u_j\p_j u_1 dx = -\int_\Omega \divv uu_1 dx + \int_{\p\Omega} u_2 u_1 dS(x) =0.
    \end{align*}
    The two computations above readily verify the claim. Hence, we are eligible to apply the elliptic estimate \eqref{est:elliptic1} and obtain
    $$
    \|v\|_{2}^2 \lesssim \frac{1}{B^2}(\|\p_t u\|_{L^2}^2 + \|u\cdot\nabla u\|_{L^2}^2) + g^2\|r\|_{L^2}^2.
    $$
    We therefore estimate $K_1$ by
    \begin{equation}
        \label{k1}
        K_1 \le \frac14\|\nabla r\|_{L^2}^2 + C\left(\frac{1}{B^2}(\|\p_t u\|_{L^2}^2 + \|u\cdot\nabla u\|_{L^2}^2) + g^2\|r\|_{L^2}^2\right)\|\rho\|_{L^2}^2
    \end{equation}
    Combining \eqref{k1}, \eqref{k2}, and \eqref{k3}, we have the following differential inequality: for any $t \in [\calT_1, \calT_1 + T_*]$,
    \begin{align*}
        \frac{d}{dt}\|r\|_{L^2}^2 + \|\nabla r\|_{L^2}^2 &\le C\left(g^2\|\rho\|_{L^2}^2 + \|\rho - \rho_m\|_{L^\infty} + \|\rho_s\|_{L^4}^2\right)\|r\|_{L^2}^2 + \frac{C}{B^2}\|\rho\|_{L^2}^2\left(\|\p_t u\|_{L^2}^2 + \|u\cdot\nabla u\|_{L^2}^2\right)\\
        &\le F(t)\|r\|_{L^2}^2 + \frac{C}{B^2}(\|\rho-\rho_m\|_{L^2}^2 + 2\pi^2\rho_m^2)\left(\|\p_t u\|_{L^2}^2 + \|u\cdot\nabla u\|_{L^2}^2\right),
    \end{align*}
    where we set $F(t) = C( g^2\|\rho(t)\|_{L^2}^2 + \|\rho(t) - \rho_m\|_{L^\infty} + \|\rho_s(t - \calT_1)\|_{L^4}^2)$. Note that by the bootstrap assumption, Proposition \ref{lem:L2toLinfty}, Proposition \ref{prop:barrier}, and $g \ge 1$, we have
    $$
    \int_{\calT_1}^{\calT_1 + T_*} F(s) ds \le C(\rho_m, \|\rho_0\|_{L^2}) g^2.
    $$

    Now we would like to estimate $\|r(t)\|_{L^2}^2$ on $[\calT_1, \calT_1 + T_*]$. Since $r(\calT_1,x) = 0$, an application of Gr\"onwall inequality and bootstrap assumption \eqref{bootstrap} yields that for any $t \in [\calT_1, \calT_1 + T_*]$:
    \begin{align}
    \label{est:renergy1}
    \|r(t)\|_{L^2}^2 &\le \frac{C}{B^2}\int_{\calT_1}^{t} (\|\rho(s)-\rho_m\|_{L^2}^2 + 2\pi^2\rho_m^2)\left(\|\p_t u(s)\|_{L^2}^2 + \|u\cdot\nabla u(s)\|_{L^2}^2\right) ds \left[ \exp\left(\int_{\calT_1}^{t} F(s)ds\right)\right] \notag\\
    &\le C(\rho_m, \|\rho_0 - \rho_m\|_{L^2})e^{g^2}\cdot \frac{1}{B^2}\int_{\calT_1}^{\calT_1 + T_*}\left(\|\p_t u(s)\|_{L^2}^2 + \|u\cdot\nabla u(s)\|_{L^2}^2\right) ds
    \end{align}
    Choose
    \begin{equation}
        \label{b1}
        B \ge \max(\calB g^2, \calC_0gN_0^{1/2}),
    \end{equation}
    where $\calB$ and $\calC_0$ are chosen as in Lemma \ref{lem:dtu} and Lemma \ref{lem:advection} respectively. Then we may invoke \eqref{est:l2dtu1} (since $\calT_1 + T_* < \calT_0$ and $\calT_1 > s_0 \ge \tau_0$) and Lemma \ref{lem:advection} to estimate that
    \begin{align*}
    \int_{\calT_1}^{\calT_1 + T_*}\|\p_t u(s)\|_{L^2}^2 + \|u\cdot\nabla u(s)\|_{L^2}^2 ds &\le C(\rho_m, \|u_0\|_{1})(g^2B + g^4N_0^2T_* + g^2N_0)\\
    &= C(\rho_m, \|u_0\|_{1})(g^2B + g^4N_0 + g^2N_0),
    \end{align*}
    where we used the fact that $T_* = 2c_0N_0^{-1}$ in the last equality. Combining this estimate with \eqref{est:renergy1}, \red{we obtain
    \begin{equation}
        \label{est:rest1}
        \begin{split}
        \sup_{\calT_1 \le t \le \calT_1 + T_*}\|r(t)\|_{L^2}^2 &\le C(\rho_m, \|\rho_0-\rho_m\|_{L^2},\|u_0\|_1)\frac{(g^2B + g^4N_0 + g^2N_0)e^{g^2}}{B^2}\\
        &\le C(\rho_m, \|\rho_0-\rho_m\|_{L^2},\|u_0\|_1)\frac{g^2e^{g^2}}{B},
        \end{split}
    \end{equation}
    where we used $g \ge 1$ and \eqref{b1} in the last inequality above. Now choosing
    \begin{equation}
        \label{b2}
        B \ge \frac{4C(\rho_m, \|\rho_0-\rho_m\|_{L^2},\|u_0\|_1)}{N_0}g^2e^{g^2},
    \end{equation}
    where $C(\rho_m, \|\rho_0-\rho_m\|_{L^2},\|u_0\|_1)$ is the constant appearing in the last line of \eqref{est:rest1}, we deduce that
    \begin{equation}
        \label{est:rest}
        \sup_{\calT_1 \le t \le \calT_1 + T_*}\|r(t)\|_{L^2}^2 \le \frac{N_0}{4}.
    \end{equation}
    }
    
    \noindent\textbf{Step 2: Control of $\|\rho_s(t - \calT_1) - \rho_m\|_{L^2}$.} {Since $\rho_s$ satisfies the static problem \eqref{eq:ksstatstokes2}, we shall use Proposition \ref{prop:barrier} to demonstrate that $\rho_s$ produces sufficient damping so as to close the bootstrap argument. However, there is a technical caveat: $N_0$ is chosen according to initial datum $\rho_0$, while $\rho_s$ corresponds to initial datum $\rho(\calT_1,\cdot)$, which has a different $L^2$ norm from $\rho_0$. Therefore, applying Proposition \ref{prop:barrier} to $\rho_s$ will produce new thresholds, which we call $\widetilde{N_0}$, $\widetilde{T_*}$, and $\widetilde{g_0}$. Yet, we will demonstrate that one is still able to produce sufficient damping from $\rho_s$.} 
    
    Recall that $(\rho_s)_m = \rho_m$, and $\|\rho_s(0,\cdot) - \rho_m\|_{L^2}^2 = N_0$ by the definition of $\calT_1$. \red{Then by Proposition \ref{prop:barrier}, setting 
    $$
    \widetilde{N_0} = \max\left(1, 2\rho_m, 2N_0, \frac{32^3C_1\rho_m^4(15/8 + 2C_1c_0)}{4c_0}\right),
    $$
    $\widetilde{T_*} = 2c_0\widetilde{N_0}^{-1}$, there exists $\widetilde{g_0} = \widetilde{g_0}(\widetilde{N_0}, \rho_m)$ such that for any $g \ge \widetilde{g_0}$, we have}
    $
    \sup_{0 \le \tau \le \widetilde{T_*}}\|\rho_s(\tau) - \rho_m\|_{L^2}^2 \le \widetilde{N_0},
    $
    and there exists $\widetilde{T} \in (\widetilde{T_*}/2,\widetilde{T_*})$ such that
    $
    \|\rho_s(\widetilde{T}) - \rho_m\|_{L^2}^2 \le \frac{\widetilde{N_0}}{8}.
    $
    \red{An elementary but crucial observation is that $\widetilde{N_0} = 2N_0$: this is because we choose $N_0$ to satisfy \eqref{N0choice1} and \eqref{N0choice2}, from which we have
    $$
    N_0 \ge \max\left(1, 2\rho_m, \frac{32^3C_1\rho_m^4(15/8 + 2C_1c_0)}{4c_0}\right)
    $$
    and therefore $\widetilde{N_0} = 2N_0$.} Thus by definition, we also have $\widetilde{T_*} = \frac{T_*}{2}$.
    
    Now, we choose 
    \begin{equation}\label{g1choice}
        g_1 = g_1(\|\rho_0 - \rho_m\|_{L^2},\rho_m) := \widetilde{g_0}(\widetilde{N_0},\rho_m).
    \end{equation}
    With the discussions above, we conclude the following: assuming $g \ge g_1$, then there is 
    $$
    \|\rho_s(t) - \rho_m\|_{L^2}^2 \le \widetilde{N_0} = 2N_0
    $$ 
    for $t \in [0, T_*/2]$. Moreover, there exists $S_1 := \widetilde{T} \in [T_*/4, T_*/2]$ such that 
    $$
    \|\rho_s(S_1) - \rho_m\|_{L^2}^2 \le \frac{\widetilde{N_0}}{8} = \frac{N_0}{4}.
    $$ 
    
    \noindent\textbf{Step 3: Proof of Improved Estimate \eqref{est:improved}.} By triangle inequality and \eqref{est:rest}:
    \begin{align*}
    \sup_{\calT_1 \le t \le \calT_1 + T_*/2}\|\rho(t) - \rho_m\|_{L^2}^2 &\le 2\sup_{\calT_1 \le t \le \calT_1 + T_*/2}\|\rho_s(t-\calT_1) - \rho_m\|_{L^2}^2 + 2\sup_{\calT_1 \le t \le \calT_1 + T_*/2}\|r(t)\|_{L^2}^2\\
    &\le 4N_0 + \frac{N_0}{4} < 5N_0,
    \end{align*}
    and
    $$
    \|\rho(\calT_1 + S_1) - \rho_m\|_{L^2}^2 \le \frac{N_0}{2} + \frac{N_0}{4} < N_0.
    $$
    \red{Now, we may iterate this argument in a similar spirit to the proof of \eqref{est:l2dtu1} in Lemma \ref{lem:dtu}. For any $i > 1$, with $\calT_{i-1}$ and $S_{i-1} \in [T_*/4, T_*/2]$ defined, we consider $\calT_i$ to be the first time instance after $\calT_{i-1} + S_{i-1}$ such that $\|\rho - \rho_m\|_{L^2}^2 = N_0$. By definition of $\calT_i$, we have $\calT_i - \calT_{i-1} \ge S_{i-1} \ge T_*/4$. Moreover, we note that $\calT_i$ still verifies that $\calT_i > s_0$ and $\calT_i + T_* < \calT_0$. Indeed, the first inequality is due to the monotonicity of $\calT_i$ in $i$, and the second inequality holds true since $T_*$ is the least doubling time of $\|\rho - \rho_m\|_{L^2}^2$ at level $N_0$. So we may repeat Step 2 and 3 above on interval $[\calT_i, \calT_i + T_*]$:
    $$
    \sup_{\calT_i \le t \le \calT_i + T_*/2}\|\rho(t) - \rho_m\|_{L^2}^2 \le 5N_0,
    $$
    and
    $$
    \|\rho(\calT_i + S_i) - \rho_m\|_{L^2}^2 \le N_0,
    $$
    for some $S_i \in [T_*/4, T_*/2].$
    Iterating for $j_*:=\lfloor\frac{4\calT_0}{T_*}\rfloor + 1$ times, we reach the estimate that
    $$
    \sup_{0 \le t \le \calT_{j_*} + T_*/2}\|\rho(t) - \rho_m\|_{L^2}^2 \le 5N_0.
    $$
    However, we must have $\calT_{j_*} + T_*/2 \ge \calT_0$, since $\calT_{i+1} - \calT_i \ge T_*/4$ for every $i$. From this fact, we conclude that
    $$
    \sup_{0 \le t \le \calT_0}\|\rho(t) - \rho_m\|_{L^2}^2\le \sup_{0 \le t \le \calT_{j_*} + T_*/2}\|\rho(t) - \rho_m\|_{L^2}^2 \le 5N_0,
    $$
    which is the improved bound \eqref{est:improved}.
    }
    
\begin{rmk}\label{rmk:stokeschoice}
    We summarize the choice of thresholds $g_1$ and $B_1$ as follows. We first choose $g_1 = g_1(\rho_m, \|\rho_0 - \rho_m\|_{L^2})$ as in \eqref{g1choice}. Fixing $g \ge g_1$, we then choose $B_1 = B_1(\rho_m, \|\rho_0 - \rho_m\|_{L^2}, \|u_0\|_1)$ large so that \eqref{b1} and \eqref{b2} hold for all $B \ge B_1g^2e^{g^2}$.
\end{rmk}

{
\section{Discussions}
In this section, we discuss extensions of Theorem \ref{thm:stokeswp} and some open problems.
\begin{enumerate}
    \item By performing a similar, but less laborious, analysis, one could prove a similar result to Theorem \ref{thm:stokeswp} to the following Keller-Segel-Stokes system:
    \begin{equation}
        \label{eq:ksstokes1}
        \begin{cases}
        \p_t \rho + u\cdot \nabla \rho -\Delta \rho + \divv(\rho\nabla(-\Delta_N)^{-1}(\rho- \rho_m)) = 0,\\
        \rho_m = \frac{1}{|\Omega|}\int_\Omega \rho(t,x) dx,\\
        \p_t u - \frac{1}{\re}\Delta u + \nabla p = \ra\rho (0,1)^T,\\
        \divv u = 0.
        \end{cases}
    \end{equation}
    equipped with boundary conditions
    \begin{equation}
    \p_2 \rho = 0,\quad u_2  = 0,\quad \p_2u_1 = 0,\quad\quad \text{on $\p\Omega$}.
\end{equation}
The proof of this result was contained in an earlier ArXiv version of this work, but we only presented the Keller-Segel-Navier-Stokes case here due to considerable redundancies in the proof of the Keller-Segel-Stokes case.
\item It is natural to attempt to generalize our main result to the Keller-Segel-Navier-Stokes system equipping classical \textbf{no-slip} boundary condition on the Navier-Stokes equation. However, in this case, the existence of more singular boundary layers might complicate the analysis. Such effect is manifested in a different leading order problem. Instead of considering the Biot-Savart law $u = -g\nabla^\perp(-\Delta_D)^{-2}\p_1 \rho$ as we did in this work, one needs to use $u = \nabla^\perp \psi$, where $\psi$ solves the following bi-harmonic equation (see \cite{dalibard2023long,park2024stability,park2024long} for a more detailed discussion):
\begin{equation}
    \label{eq:biharmonic}
    \begin{cases}
        \Delta^2\psi = \p_1 \rho,\\
        \psi|_{\p\Omega} = \p_2\psi|_{\p\Omega} = 0.
    \end{cases}
\end{equation}
\end{enumerate}
}

\appendix
\section{Appendix}
\subsection{Stokes Operator Equipped with Lions Boundary Condition}
In this section, we first rigorously prove Proposition \ref{prop:Poincare} and Proposition \ref{prop:statstokes}, which concern Stokes operator corresponding to Lions boundary condition. For the convenience of the readers, we reiterate the statements of the propositions as follows:

\begin{prop}
\label{prop:a1}
    Let $u \in V$. Then the following estimate holds:
    \begin{equation}
    \label{est:a1}
    \|u\|_{L^2} \le C_P \|\nabla u\|_{L^2},
    \end{equation}
    where $C_P$ is a positive constant that only depends on the domain $\Omega$.
\end{prop}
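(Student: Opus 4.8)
The plan is to estimate the two components $u_1$ and $u_2$ of $u=(u_1,u_2)$ separately, exploiting a different constraint for each; notably, the divergence-free condition is not needed. First I would treat $u_2$. Since $\p\Omega=\T\times\{0,\pi\}$ and $u_2\in H^1(\Omega)$ vanishes on all of $\p\Omega$, for a.e.\ fixed $x_1\in\T$ the slice $x_2\mapsto u_2(x_1,x_2)$ lies in $H^1_0([0,\pi])$, so the one-dimensional Poincar\'e inequality gives $\|u_2(x_1,\cdot)\|_{L^2([0,\pi])}\le\|\p_2 u_2(x_1,\cdot)\|_{L^2([0,\pi])}$ (the first Dirichlet eigenvalue of $[0,\pi]$ being $1$); integrating in $x_1$ yields $\|u_2\|_{L^2(\Omega)}\le\|\p_2 u_2\|_{L^2(\Omega)}\le\|\nabla u\|_{L^2(\Omega)}$.

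Next I would treat $u_1$, which carries no pointwise boundary information but instead satisfies the mean-zero constraint $\int_\Omega u_1\,dx=0$ built into the definition of $V$. Since $\Omega$ is a bounded, connected Lipschitz domain, the Poincar\'e--Wirtinger inequality applies to $u_1\in H^1(\Omega)$: $\|u_1-|\Omega|^{-1}\int_\Omega u_1\,dx\|_{L^2(\Omega)}\le C_\Omega\|\nabla u_1\|_{L^2(\Omega)}$, and the subtracted average vanishes, so $\|u_1\|_{L^2(\Omega)}\le C_\Omega\|\nabla u_1\|_{L^2(\Omega)}\le C_\Omega\|\nabla u\|_{L^2(\Omega)}$. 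Adding the squared bounds for the two components gives $\|u\|_{L^2}^2=\|u_1\|_{L^2}^2+\|u_2\|_{L^2}^2\le(1+C_\Omega^2)\|\nabla u\|_{L^2}^2$, which is the claim with $C_P=\sqrt{1+C_\Omega^2}$, a constant depending only on $\Omega$.

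There is no substantial obstacle here; the only point worth flagging is that $V$ does not force $u_1$ to vanish on $\p\Omega$, so one must invoke the mean-zero normalization (rather than a Dirichlet condition) to close the estimate for that component. If one prefers not to quote Poincar\'e--Wirtinger, an alternative is a compactness argument: were \eqref{est:a1} to fail, there would exist $u_n\in V$ with $\|u_n\|_{L^2}=1$ and $\|\nabla u_n\|_{L^2}\to0$; by Rellich--Kondrachov a subsequence converges in $L^2$ and weakly in $H^1$ to some $u$ with $\nabla u=0$, hence to a constant vector, and since $V$ is closed under these limits the constraints $u_2|_{\p\Omega}=0$ and $\int_\Omega u_1\,dx=0$ force $u=0$, contradicting $\|u\|_{L^2}=1$. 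I would present the direct two-component argument as the main proof, since it produces an explicit constant.
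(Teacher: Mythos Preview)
Your proposal is correct and follows essentially the same approach as the paper: split $\|u\|_{L^2}^2=\|u_1\|_{L^2}^2+\|u_2\|_{L^2}^2$, apply the Dirichlet Poincar\'e inequality to $u_2\in H^1_0(\Omega)$, and the Poincar\'e--Wirtinger inequality to the mean-zero function $u_1$. The paper's proof is terser (it simply says ``apply Poincar\'e inequality to $u_1$ and $u_2$ separately''), whereas you supply the slice-by-slice argument for $u_2$ and an explicit constant, but the underlying idea is identical.
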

\begin{proof}
    Since $\|u\|_{L^2}^2 = \|u_1\|_{L^2}^2 + \|u_2\|_{L^2}^2$, we will estimate each velocity component separately. Since $u \in V$, then in particular $u_1$ has zero spatial mean and $u_2 \in H^1_0(\Omega)$. Thus, \eqref{est:a1} follows from applying Poincar\'e inequality to $u_1$ and $u_2$ separately. 
\end{proof}

\begin{prop}
    \label{prop:a2}
    Assuming $f \in H$, then \eqref{eq:statstokes} admits a unique solution $u \in W$ with estimate
    \begin{equation}
        \label{est:elliptic1a}
        \|u\|_2 \le C\|f\|_{L^2}.
    \end{equation}
    In fact, $u$ is given by the following explicit formula:
    \begin{equation}
    \label{eq:bsorigina}
    u = \nabla^\perp(-\Delta_D)^{-2}(\p_2f_1 - \p_1f_2).
    \end{equation}
    More generally, if $f \in H^s \cap H$, $s \ge 1$, we have the following improved regularity estimate:
    \begin{equation}
        \label{est:elliptic2a}
        \|u\|_{s+2} \le C\|f\|_{s}.
    \end{equation}
\end{prop}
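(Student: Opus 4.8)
The plan is to reduce the boundary value problem \eqref{eq:statstokes} to two successive homogeneous Dirichlet problems for the Laplacian via the vorticity--stream-function formulation; this simultaneously yields the explicit formula \eqref{eq:bsorigina}, uniqueness, and the estimates \eqref{est:elliptic1a}--\eqref{est:elliptic2a}, all on the flat channel $\Omega = \T\times[0,\pi]$.

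\textbf{Step 1 (formula and uniqueness).} First I apply the scalar curl $\nabla^\perp\cdot$ to the momentum equation; the pressure term is annihilated and $\omega := \nabla^\perp\cdot u$ solves $-\Delta\omega = \nabla^\perp\cdot f = -(\partial_2 f_1 - \partial_1 f_2)$ in $\Omega$. The Lions conditions $u_2|_{\partial\Omega}=0$ and $\partial_2 u_1|_{\partial\Omega}=0$ force $\omega|_{\partial\Omega}=0$, since on $\partial\Omega$ the term $\partial_1 u_2$ is a tangential derivative of $u_2$ and hence vanishes, leaving $\omega=-\partial_2 u_1$. Therefore $\omega = -(-\Delta_D)^{-1}(\partial_2 f_1 - \partial_1 f_2)$. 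Next, $\divv u = 0$ together with $u_2|_{\partial\Omega}=0$ gives, after averaging the divergence-free condition over $\T$, that $\int_{\T} u_2(\cdot,x_2)\,dx_1\equiv 0$, so there is a single-valued stream function $\psi$ with $u = \nabla^\perp\psi$; $\psi$ is constant on each of the two boundary circles, one constant may be normalized to zero, and the other is forced to vanish by the membership condition $\int_\Omega u_1\,dx = 0$ in $W$ (indeed $\int_\Omega u_1 = -\int_{\T}(\psi(x_1,\pi)-\psi(x_1,0))\,dx_1$). Thus $\Delta\psi = \omega$ with $\psi|_{\partial\Omega}=0$, i.e. $\psi = (-\Delta_D)^{-2}(\partial_2 f_1 - \partial_1 f_2)$, which is exactly \eqref{eq:bsorigina}. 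Applied to the difference of two solutions in $W$ this argument gives uniqueness; alternatively uniqueness follows from testing the homogeneous equation with $u$ — the boundary terms vanish by the Lions conditions and $\divv u = 0$, forcing $\nabla u = 0$ and then $u\equiv 0$ via $u_2|_{\partial\Omega}=0$ and $\int_\Omega u_1 = 0$.

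\textbf{Step 2 (existence: recovering the pressure).} I then verify that $u := \nabla^\perp(-\Delta_D)^{-2}(\partial_2 f_1 - \partial_1 f_2) = \nabla^\perp\psi$ solves \eqref{eq:statstokes}. That $\divv u = 0$, $u_2|_{\partial\Omega}=0$, and $\int_\Omega u_1 = 0$ follow from $\psi|_{\partial\Omega}=0$; and $\partial_2 u_1|_{\partial\Omega} = -\partial_2^2\psi|_{\partial\Omega}=0$ because $\partial_1^2\psi|_{\partial\Omega}=0$ and $\Delta\psi|_{\partial\Omega} = \omega|_{\partial\Omega}=0$. Since $-\Delta u = \nabla^\perp(-\Delta\psi) = \nabla^\perp\eta$ with $\eta := (-\Delta_D)^{-1}(\partial_2 f_1 - \partial_1 f_2)\in H^1_0$, it remains to find a single-valued, $2\pi$-periodic $p$ with $\nabla p = f - \nabla^\perp\eta$. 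The field $f - \nabla^\perp\eta$ is curl-free ($\nabla^\perp\cdot f = \Delta\eta$), and on the topologically nontrivial channel $\Omega$ exactness reduces to vanishing of the circulation $\int_{\T}(f_1 + \partial_2\eta)\,dx_1$ along a generating loop. Averaging $-\Delta\eta = \partial_2 f_1 - \partial_1 f_2$ over $\T$ yields $-\partial_2\bar\eta = \bar f_1 + C$ for a constant $C$, and integrating in $x_2$ with $\bar\eta|_{\partial\Omega}=0$ gives $C = -\tfrac{1}{2\pi^2}\int_\Omega f_1\,dx = 0$ since $f\in H$; hence the circulation is $-2\pi C = 0$ and $p$ exists (and lies in $H^{s+1}$ whenever $f\in H^s$).

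\textbf{Step 3 (estimates) and the main obstacle.} The bounds \eqref{est:elliptic1a} and \eqref{est:elliptic2a} follow by applying elliptic regularity of $-\Delta_D$ twice: $\|\partial_2 f_1 - \partial_1 f_2\|_{H^{s-1}}\le\|f\|_s$ and then $\|\psi\|_{s+3} = \|(-\Delta_D)^{-2}(\partial_2 f_1 - \partial_1 f_2)\|_{s+3}\le C\|f\|_s$, so $\|u\|_{s+2}\le\|\psi\|_{s+3}\le C\|f\|_s$; the case $s=0$ gives \eqref{est:elliptic1a}. Here one uses that $\Omega = \T\times[0,\pi]$ is a smooth domain with no corners, so $(-\Delta_D)^{-1}\colon H^m\to H^{m+2}\cap H^1_0$ is bounded for every integer $m\ge-1$ with \emph{no} compatibility conditions on the datum — this can be checked by Fourier series in $x_1$, a uniform-in-$k$ one-dimensional Dirichlet estimate, bootstrapping tangential derivatives, and reading the normal derivatives off the equation. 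The only genuinely nontrivial step is Step 2, namely that the pressure can be chosen globally single-valued; this is precisely where the constraint $\int_\Omega f_1\,dx = 0$ in the definition of $H$ is used, since without it one picks up a Poiseuille-type pressure linear in $x_1$ and a corresponding shear in $u$. The remaining ingredients — the boundary-condition identifications in Step 1 and the elliptic regularity in Step 3 — are routine for the flat channel.
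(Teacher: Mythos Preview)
Your proof is correct and follows the same vorticity--stream-function reduction as the paper: curl to obtain $-\Delta_D\omega$, then $-\Delta_D\psi=\omega$, then $u=\nabla^\perp\psi$ and elliptic regularity. You are in fact more careful than the paper's argument, since you explicitly verify the boundary conditions for $u$ and recover a single-valued pressure by checking the circulation on the generating loop of $\T\times[0,\pi]$ --- which is precisely where the condition $\int_\Omega f_1\,dx=0$ in the definition of $H$ is used --- a step the paper's proof simply asserts.
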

\begin{proof}
To show well-posedness of problem \eqref{eq:statstokes}, it is convenient to show on the level of stream function. Since we would find $u$ such that $\int_\Omega u_1 dx = 0$ and $u_2|_{\p\Omega} = 0$, there exists a unique stream function $\psi$ such that $u = \nabla^\perp\psi$ by Hodge decomposition. Moreover, $\psi$ satisfies the following:
    $$
    \Delta \psi = \omega,\quad \psi|_{\p\Omega} = 0.
    $$
    Taking $\nabla^\perp\cdot$ on \eqref{eq:statstokes} and denoting $\omega = \nabla^\perp \cdot u$, we must have
    $$
    -\Delta\omega = \p_1f_2 - \p_2f_1,\quad \omega|_{\p\Omega} = 0.
    $$
    Therefore, the discussion above motivates us to study the following auxiliary coupled system:

    \begin{equation}
    \begin{cases}
    \Delta \psi = \omega,\quad \psi|_{\p\Omega} = 0,\\
    -\Delta\omega = \p_1f_2 - \p_2f_1,\quad \omega|_{\p\Omega} = 0.
    \end{cases}
    \end{equation}

    First, note that since $f \in L^2$, we have $\p_1f_2 - \p_2 f_1  \in H^{-1}_0$ in the sense of distribution. Then by standard elliptic theory, there exists a unique $\omega \in H^1_0$ such that the $\omega$ equation holds, with estimate $\|\omega\|_1 \le C(\Omega) \|f\|_{L^2}$ . Applying standard elliptic theory again to $\psi$ equation, we have a unique solution $\psi \in H^3 \cap H^1_0$, with estimate $\|\psi\|_3 \le C(\Omega)\|\omega\|_1$. Define $u = \nabla^\perp \psi$. Then $u$ strongly solves \eqref{eq:statstokes} with boundary conditions $u_2|_{\p\Omega} = \p_2u_1|_{\p \Omega} = 0$ satisfied in the trace sense. Moreover, we have the estimate
    $$
    \|u\|_2 \le \|\psi\|_{3} \le C(\Omega)\|f\|_{L^2}.
    $$
    Moreover from the discussion above, we obtain formula \eqref{eq:bsorigina}. Finally, to show the higher regularity estimates \eqref{est:elliptic2a}, we note that given $f \in H^s \cap H$, $s \ge 1$, $\nabla f \in H^{s-1}$. Using the explicit formula \eqref{eq:bsorigina} and standard elliptic estimate, we conclude \eqref{est:elliptic2a}.
\end{proof}

Finally, we present the following calculus lemma:
    \begin{lem}
    \label{lem:ibp}
        $\|\nabla^2 u\|_{L^2} = \|\Delta u\|_{L^2}$ for $u \in W \cap C^\infty(\Omega)$.
    \end{lem}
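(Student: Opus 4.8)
The plan is a double integration by parts, with all the genuine work hidden in checking that the boundary terms cancel. Write $\Delta u_k = \p_i\p_i u_k$, so that
\begin{equation*}
\|\Delta u\|_{L^2}^2 = \sum_{k}\int_\Omega (\p_i\p_i u_k)(\p_j\p_j u_k)\,dx .
\end{equation*}
First I would integrate by parts in $x_i$ on the first factor, and then in $x_j$ on the second factor (using $\p_i\p_j^2 u_k = \p_j(\p_i\p_j u_k)$). Since $u$ is smooth and periodic in $x_1$, this is legitimate and, because $\p\Omega=\T\times\{0,\pi\}$ has outward unit normal $\pm e_2$ (so all contributions from the periodic direction vanish), it yields
\begin{equation*}
\|\Delta u\|_{L^2}^2 = \|\nabla^2 u\|_{L^2}^2 + B_1 - B_2,\qquad
B_1 = \sum_{k}\int_{\p\Omega}(\p_2 u_k)(\Delta u_k)\,n_2\,dS,\qquad
B_2 = \sum_{i,k}\int_{\p\Omega}(\p_i u_k)(\p_i\p_2 u_k)\,n_2\,dS,
\end{equation*}
where I used $\sum_{i,j,k}\int_\Omega(\p_i\p_j u_k)^2 = \|\nabla^2 u\|_{L^2}^2$ and $\sum_j \p_j^2 u_k = \Delta u_k$.

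The heart of the argument is then to evaluate $B_1$ and $B_2$ using the boundary data encoded in $u\in W$, namely $u_2|_{\p\Omega}=0$ and $\p_2 u_1|_{\p\Omega}=0$, together with the tangential ($x_1$-)derivatives they force: $\p_1 u_2|_{\p\Omega}=\p_1^2 u_2|_{\p\Omega}=0$ and $\p_1\p_2 u_1|_{\p\Omega}=0$. In $B_1$ the $k=1$ term drops since $\p_2 u_1=0$ on $\p\Omega$, leaving $B_1=\int_{\p\Omega} n_2(\p_2 u_2)(\Delta u_2)\,dS$. In $B_2$ the terms $(i,k)\in\{(1,1),(2,1)\}$ vanish because $\p_2 u_1=0$ on $\p\Omega$ (hence also $\p_1\p_2 u_1=0$ there), and the term $(i,k)=(1,2)$ vanishes because $\p_1 u_2=0$ on $\p\Omega$; only $(i,k)=(2,2)$ survives, so $B_2=\int_{\p\Omega} n_2(\p_2 u_2)(\p_2^2 u_2)\,dS$. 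Therefore
\begin{equation*}
B_1 - B_2 = \int_{\p\Omega} n_2\,(\p_2 u_2)\big(\Delta u_2 - \p_2^2 u_2\big)\,dS = \int_{\p\Omega} n_2\,(\p_2 u_2)\,\p_1^2 u_2\,dS = 0,
\end{equation*}
the last equality because $\p_1^2 u_2$ is a second tangential derivative of $u_2\equiv 0$ on each component of $\p\Omega$. This gives $\|\Delta u\|_{L^2}^2 = \|\nabla^2 u\|_{L^2}^2$, which is the claim.

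I do not expect a genuine obstacle; the only delicate point is the bookkeeping of which boundary terms survive after the two integrations by parts and matching them precisely against the boundary conditions defining $W$ (note that $\divv u=0$ is not even needed for this identity — only $u_2|_{\p\Omega}=0$ and $\p_2 u_1|_{\p\Omega}=0$). If one is uneasy about a second derivative appearing under a boundary integral, one can first establish the identity for $u$ smooth up to $\p\Omega$ — which is the relevant situation in our applications, given the smooth product domain $\Omega$ — and then pass to general $u\in W\cap C^\infty(\Omega)$ by density.
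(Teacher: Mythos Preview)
Your proof is correct and follows the same double integration-by-parts strategy as the paper. The one noteworthy difference is in how the boundary terms are handled: the paper shows each of the two boundary integrals vanishes \emph{separately}, and for this it invokes the divergence-free condition (to conclude $\p_2^2 u_2 = -\p_1\p_2 u_1 = 0$ on $\p\Omega$, hence $\Delta u_2|_{\p\Omega}=0$). You instead combine the two boundary terms first and observe that their difference reduces to $\int_{\p\Omega} n_2(\p_2 u_2)(\p_1^2 u_2)\,dS$, which vanishes using only $u_2|_{\p\Omega}=0$. Your remark that $\divv u=0$ is not needed is therefore correct and is a small sharpening of the paper's argument.
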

    \begin{proof}
        Integrating by parts twice, we obtain that
    \begin{align*}
        \|\Delta u\|_{L^2}^2 &= \int_\Omega \p_i\p_iu_j\p_k\p_ku_j dx\\
        &= \int_\Omega \p_i\p_k u_j \p_i\p_ku_j dx + \int_{\p\Omega}\Delta u_j\p_2 u_j dx_1 - \int_{\p\Omega} \p_k\p_2u_j \p_k u_j dx_1\\
        &=: \|\nabla^2 u\|_{L^2}^2 + BT_1 + BT_2.
    \end{align*}
    We moreover notice that
    \begin{align*}
        BT_1 &= \int_{\p\Omega} \Delta u_1 \p_2u_1 dx_1 + \Delta u_2 \p_2u_2 dx_1 = 0,
    \end{align*}
    Now we observe that for any $x \in \p\Omega$, since $u$ is smooth and belongs to the class $W$, we have
    $$
    \Delta u_2 = \p_1^2u_2 + \p_2^2 u_2 = \p_2^2 u_2 = -\p_1\p_2u_1 = 0,
    $$
    where we used $u_2|_{\p\Omega} = 0$ in the second equality, $\divv u = 0$ in the third equality, and $\p_2 u_1|_{\p\Omega} = 0$ in the last equality. Using this fact and $\p_2 u_1 = 0$ for $x \in \p\Omega$, we expand the sum and obtain:
    \begin{align*}
        BT_2 &= \int_{\p\Omega} \p_1\p_2u_1 \p_1u_1 dx_1 + \int_{\p\Omega} \p_1\p_2u_2 \p_1u_2 dx_1\\
        &+\int_{\p\Omega} \p_2\p_2u_1 \p_2u_1 dx_1 + \int_{\p\Omega} \p_2\p_2u_2 \p_2u_2 dx_1\\
        &= \int_{\p\Omega} \p_2\p_2u_2 \p_2u_2 dx_1,
    \end{align*}
    where we used the boundary conditions of $u$ in the last equality. Since $\p_2^2 u_2 = -\p_1\p_2 u_1 = 0$ for $x \in \p\Omega$, we finally conclude that $BT_2 = 0$, and thus $\|\nabla^2 u\|_{L^2} = \|\Delta u\|_{L^2}$.
    \end{proof}

\end{document}